\numberwithin{equation}{section}
\renewcommand{\d}{\mathrm{d}}
\newcommand{\R}{\mathbb{R}}
\newcommand{\N}{\mathbb{N}}
\newcommand{\Z}{\mathbb{Z}}
\newcommand{\E}{\mathbb{E}}
\newcommand{\br}{\bm{r}}
\newcommand{\bu}{\bm{u}}
\newcommand{\bv}{\bm{v}}
\newcommand{\bet}{\bm{\eta}}
\newcommand{\daoshu}[2]{\dfrac{\d #1}{\d #2}}
\newcommand{\piandao}[2]{\dfrac{\partial #1}{\partial #2}}
\newcommand{\bianfen}[2]{\dfrac{\delta #1}{\delta #2}}
\begin{document}

\Year{XXXX} %
\Month{XX}
\Vol{XX} %
\No{X} %
\BeginPage{1} %
\EndPage{XX} %
\AuthorMark{Li X {\it et al.}}
\ReceivedDay{XX}
\AcceptedDay{XX}
\PublishedOnlineDay{; published online XX}
\DOI{XX} 

\title{An unconditionally energy stable finite difference scheme for a stochastic Cahn-Hilliard equation}{}


\author[1]{LI Xiao}{}
\author[2]{QIAO ZhongHua}{Corresponding author}
\author[3]{ZHANG Hui}{}

\address[{\rm1}]{School of Mathematical Sciences, Beijing Normal University, Beijing {\rm 100875}, China;}
\address[{\rm2}]{Department of Applied Mathematics, The Hong Kong Polytechnic University, Hong Kong, China;}
\address[{\rm3}]{Laboratory of Mathematics and Complex Systems, Ministry of Education and\\
School of Mathematical Sciences,Beijing Normal University, Beijing {\rm 100875}, China}

\Emails{ lixiao1228@163.com, zqiao@polyu.edu.hk, hzhang@bnu.edu.cn}

\maketitle


{\begin{center}
\parbox{14.5cm}{\begin{abstract}
In this work, the MMC-TDGL equation, a stochastic Cahn-Hilliard equation is solved numerically by using
the finite difference method in combination with a convex splitting technique of the energy functional.
For the non-stochastic case, we develop an unconditionally energy stable difference scheme which is proved to be uniquely solvable.
For the stochastic case, by adopting the same splitting of the energy functional,
we construct a similar and uniquely solvable difference scheme with the discretized stochastic term.
The resulted schemes are nonlinear and solved by Newton iteration.
For the long time simulation, an adaptive time stepping strategy is developed based on both first- and second-order derivatives of the energy.
Numerical experiments are carried out to verify the energy stability,
the efficiency of the adaptive time stepping and the effect of the stochastic term.\vspace{-3mm}
\end{abstract}}\end{center}}

\keywords{Cahn-Hilliard equation, stochastic term, energy stability, convex splitting, adaptive time stepping}

\MSC{65M06, 65M12, 65Z05}

\renewcommand{\baselinestretch}{1.2}
\begin{center} \renewcommand{\arraystretch}{1.5}
{\begin{tabular}{lp{0.8\textwidth}} \hline \scriptsize
{\bf Citation:}\!\!\!\!&\scriptsize Li X, Qiao Z H, Zhang H. \makeatletter\@titlehead.
Sci China Math, XX, XX,
 doi:~\@DOI\makeatother\vspace{1mm}
\\
\hline
\end{tabular}}\end{center}

\baselineskip 11pt\parindent=10.8pt  \wuhao

\section{Introduction}

The process of phase transition has attracted many theoretical and experimental studies
in the field of small molecules or polymer mixture systems \cite{Dynamics1990}.
Among various methods for simulating phase transitions of a uniform thermodynamic system,
Cahn-Hilliard dynamics, advanced by Cahn and Hilliard \cite{CH1958} as a model for spinodal decomposition,
turns out to be one of the most suitable models \cite{Mesoscale2002}.
To take a random disturbance into account, a stochastic term is usually added into the equation.
This stochastic equation, called the Cahn-Hilliard-Cook model \cite{CH1958}, is of the form:
\begin{equation}
\label{sec1_MMC_TDGL}
\piandao{\phi}{t}=D\Delta\bianfen{U(\phi)}{\phi}+\varepsilon\xi(\br,t),\quad\br\in\Omega,\ t>0,
\end{equation}
where $\phi=\phi(\br,t)$ is a conserved field variable representing the concentration of one of the components of the mixture
(or sometimes, the difference between the concentration of the two components of a binary mixture),
$U=U(\phi)$ is a coarse-grained free energy functional.
The parameter $D>0$ is the diffusion coefficient (assumed constant here) and $\varepsilon>0$ describes the strength of the noise.
The stochastic term $\xi$ is required to satisfy the fluctuation-dissipation theorem \cite{Dynamics1990,fluctuation1999}:
\begin{equation}
\label{sec1_fluctuation_dissipation}
\E[\xi(\br_1,t_1)\xi(\br_2,t_2)]=-2D\Delta\delta(\br_1-\br_2)\delta(t_1-t_2),
\end{equation}
where $\E$ represents the mathematical expectation operator.
Recently, the time-dependent Ginzburg-Landau (TDGL) mesoscopic simulation method, based on the Cahn-Hilliard-Cook model,
has been widely used to describe the phase transitions of mixtures of small molecules \cite{ChCo2002}
and mixtures with polymers and block copolymers \cite{SelfAssemble1994}.
Here we give a brief review on the applications of the stochastic Cahn-Hilliard equation (\ref{sec1_MMC_TDGL})
for the descriptions of the phase separation process.

For the small molecules or atomic systems,
the functional $U(\phi)$ is usually assumed to have the Ginzburg-Landau form \cite{Huang1998}:
$$U_{\text{GL}}(\phi)=\int_\Omega\Big(\frac{\mu}{4}\phi^4-\frac{\gamma}{2}\phi^2+\frac{\gamma^2}{4\mu}+\frac{\kappa}{2}|\nabla\phi|^2\Big)\,\d\br,$$
where $\mu,\gamma,\kappa$ are all positive constants.
This form of the functional $U$, combined with the equation (\ref{sec1_MMC_TDGL}),
is usually used to investigate the phase separation of the small molecule systems,
such as binary alloys, fluid mixtures, inorganic glasses \cite{Dynamics1990}.

For the studies of spinodal decomposition in polymer blend,
Flory and Huggins developed a lattice theory and gave the Flory-Huggins free energy \cite{Flory1953}.
Combined with the contribution made by de Gennes,
the Flory-Huggins-de Gennes free energy functional for mixture of two polymers is given by \cite{deGennes1980}
\begin{equation}
\label{sec1_Flory_Huggins_deGennes}
U_{\text{FH}}(\phi)=\int_\Omega\Big(f_{\text{FH}}(\phi)+\frac{1}{36\phi(1-\phi)}|\nabla\phi|^2\Big)\,\d\br,
\end{equation}
where $$f_{\text{FH}}(\phi)=\frac{\phi}{N_1}\ln\phi+\frac{1-\phi}{N_2}\ln(1-\phi)+\chi\phi(1-\phi)$$ is the Flory-Huggins free energy density,
$N_1,N_2$ represent the degree of polymerization of the polymer 1 and 2, respectively, and $\chi$ is the Huggins interaction parameter.
Combined with the Flory-Huggins-de Gennes free energy functional (\ref{sec1_Flory_Huggins_deGennes}),
the equation (\ref{sec1_MMC_TDGL}) can be used to describe the phase transition of the thermodynamic systems of the polymer mixtures,
such as a large class of hydrogels, a kind of network crosslinked by polymer chains.

Macromolecular microsphere composite (MMC) hydrogels, a kind of new hydrogels,
have become more and more popular in polymeric materials because of their high mechanical strength \cite{WangHuiliang2007}.
The microstructure of MMC hydrogels is composed of both polymer chains and macromolecular microspheres.
In \cite{LiXiao2014,ZhaiDan2013}, we have developed a reticular free energy to describe this kind of structure.
Replacing $f_{\text{FH}}$ by the reticular free energy in (\ref{sec1_Flory_Huggins_deGennes})
and combining the stochastic Cahn-Hilliard equation (\ref{sec1_MMC_TDGL}),
we obtain a new model, named by MMC-TDGL equation, which simulates the phase transition of the MMC hydrogels well.
The specific expression of the reticular free energy and the form of the MMC-TDGL equation will be given in section 2.

For the stochastic Cahn-Hilliard equation (\ref{sec1_MMC_TDGL}),
some studies consider the effect of the stochastic term that reflects the thermal disturbance driving the system to escape away from the initial state.
Many of them (see, e.g., \cite{Dynamics1990}) are devoted to considering the random effect in the initial condition,
while some works (see, e.g., \cite{ZhangWei2012}) focus on the minimal energy pathway
of a system from one metastable state to another driven by the stochastic term.
In \cite{LiXiao2014}, a stochastic function satisfying the fluctuation-dissipation theorem (\ref{sec1_fluctuation_dissipation}) was constructed and (\ref{sec1_MMC_TDGL}) was discretized by using a semi-implicit finite difference scheme,
that is, the linear part of $\phi$ in (\ref{sec1_MMC_TDGL}) is treated implicitly while the nonlinear part explicitly.
The discrete energy stability for the non-stochastic case was obtained with a remainder term with respect to the time step.
In other words, the strict energy stability could not be obtained, which motivates us to find some better numerical schemes.

Recently, energy stable schemes have attracted increasingly attention for phase field models,
see e.g. \cite{Furihata2001,TangTao2007,QiaoZhonghua2015,QiaoZhonghua2015_Xie,TangTao2006} and references therein.
As one of the energy stable methods, the convex splitting approach has been widely used in the simulation of
phase field models \cite{Wise2012JSC,QiaoZhonghua2014,LiuShen2015,Wise2009}.
The idea about the convex splitting was first proposed for gradient systems by Eyre \cite{Eyre1997}.
The fundamental observation is that the energy $F$ admits a splitting into convex and concave parts,
namely, $F=F_c-F_e$, where $F_c$ and $F_e$ are both convex.
Here we use the notation $F_c$ and $F_e$ in \cite{Eyre1997},
where $c$ refers to the contractive part of the energy and $e$ refers to the expansive part.
A nonlinear stabilized splitting scheme for the Cahn-Hilliard equation was developed as an example in \cite{Eyre1997},
where the energy functional was split into convex and concave parts, with the convex part treated implicitly and the concave part explicitly.
The unconditionally gradient stability and unique solvability properties were pointed out by Eyre \cite{Eyre1998} but not proved.
Eyre's idea was generalized to the case of the Swift-Hohenberg equation and the phase field crystal equation \cite{Wise2009}.
The convex splitting scheme is first-order convergent in time.
A rigorous and complete theory framework was developed, and the unconditionally energy stability and unique solvability were proved strictly therein.
Besides, the convex splitting method has been used to solve the equations of thin film epitaxy \cite{Wise2010}.
Recently, second-order in time and unconditionally energy stable schemes have been proposed \cite{Wise2012},
based on the convex splitting technique, and were solved by using a linear iteration algorithm \cite{Wise2014}.

In this paper, we work in a similar framework of the convex splitting method for the MMC-TDGL equation.
There are three main purposes of this work.
First, we present an unconditionally energy stable and uniquely solvable difference scheme for solving the MMC-TDGL equation without the stochastic term.
The scheme is based on a convex splitting of the energy functional.
To apply the framework of this method, a convex splitting of the energy functional with the specific reticular free energy should be found out.
We decompose the integrand as a difference of two parts and prove them to be both convex.
The unconditional energy stability and unique solvability of the derived difference scheme are then obtained.
Second, the stochastic term is taken into account with the same discretized form as it in \cite{LiXiao2014},
which leads to a similar difference scheme whose unique solvability is derived directly from the non-stochastic case.
Third, we develop an adaptive time stepping strategy which is efficient for long time simulations of the MMC-TDGL equation.
In \cite{QiaoZhonghua2011}, an adaptive time stepping method is proposed for the molecular beam epitaxy models, based on the variation of the energy.
Numerical results have demonstrated that the adaptive time stepping method can save much CPU time without losing the accuracy.
Their approach is to consider the derivative of the energy and to magnify it via a constant $\alpha$.
Because of the high nonlinearity of the energy in our model,
there are longer time intervals during which the energy varies slightly but the derivative varies faster before the sharp decay.
A large $\alpha$ must be used to capture the moment of the sharp decay accurately, which leads to the low computational efficiency.
In order to overcome this difficulty, we treat $\alpha$ as a function of the time instead of a constant.
After the sharp decay, the energy decreases more and more gently, so we should reduce $\alpha$ to obtain larger time steps.
This strategy allows us not only to capture the sharp decay of the energy more accurately, but also to use the time steps as large as possible.

Generally speaking,
one of the main difficulties for the simulations of the Cahn-Hilliard dynamics is to approximate the fourth-order derivatives.
One approach is to set $\psi=\delta U(\phi)/\delta\phi$ in \eqref{sec1_MMC_TDGL} and write the fourth-order problem \eqref{sec1_MMC_TDGL}
as a mix-form system consisting of two second-order equations with the unknowns $\phi$ and $\psi$.
In this approach, one just need to approximate the second-order derivatives.
This technique has been widely used to simulate the fourth-order problems,
e.g., the Cahn-Hilliard dynamics and the molecular beam epitaxy models,
combining with either finite difference methods \cite{SunZhizhong1995,Wise2010_2,Wise2012}
or finite element methods \cite{Elliott92,ZhangShuo2010,QiaoZhonghua2015_Xie}.
Another strategy is to discretize directly the fourth-order derivatives using the second-order accurate center-difference formula.
Numerical tests imply that the derived schemes will be satisfactory as long as the spatial mesh size is sufficiently small,
see, e.g., \cite{Furihata2001,Furihata2003} for the Cahn-Hilliard dynamics, \cite{QiaoZhonghua2011,Wise2010} for the molecular beam epitaxy models and 
\cite{Sun2015, Zhang2013} for the phase field crystal equation.
In addition, discretizing the fourth-order derivatives using a fourth-order compact difference scheme could be found in a recent work \cite{LiSun2012}.
In this paper, we adopt the second technique to deal with the fourth-order derivative.
We will present a group of numerical experiments to show how the numerical solutions depend on the mesh size,
and determine an appropriate mesh giving the reasonable results for the subsequent simulations.

The rest of the paper is organized as follows.
In section 2, we illustrate the mathematical model of the phase transition of the MMC hydrogel.
The difference schemes based on a convex splitting of the energy functional are presented in section 3.
We first give the strict proofs of the unconditionally energy stability and unique solvability of the scheme for the non-stochastic case.
Then we recall the construction of the stochastic term,
provide the scheme with the stochastic term and illustrate the unique solvability.
In section 4, Newton iteration combined with GMRES algorithm is used to solve the difference scheme numerically.
Besides, an adaptive time-stepping method is given here.
In section 5, some numerical experiments are conducted to verify the energy stability of the non-stochastic difference scheme
and to compare the stochastic scheme with that developed in the previous work.
Concluding remarks are given in section 6.

\section{The mathematical model: MMC-TDGL equation}

In this paper, we consider the MMC-TDGL equation (\ref{sec1_MMC_TDGL}) in two-dimensional space,
with assuming that $\Omega=(0,L_x)\times(0,L_y)$ and $\phi(\cdot,t)$ subject to the periodic boundary condition.
The solutions of (\ref{sec1_MMC_TDGL}) are minima of the energy functional
\begin{equation}
    \label{sec2_energyfunctional}
    U(\phi)=\int_\Omega\Big(S(\phi)+H(\phi)+\kappa(\phi)|\nabla\phi|^2\Big)\,\d\br,
\end{equation}
where $S(\phi)+H(\phi)$ is the reticular free energy density \cite{LiXiao2014,ZhaiDan2013}:
\begin{equation}
    \label{sec2_reticular_free}
    S(\phi)=\frac{\phi}{\tau}\ln\frac{\alpha\phi}{\tau}+\frac{\phi}{N}\ln\frac{\beta\phi}{\tau}+(1-\rho\phi)\ln(1-\rho\phi),
    \quad H(\phi)=\chi\phi(1-\rho\phi),
\end{equation}
and $\kappa(\phi)$ is the de Gennes coefficient \cite{deGennes1980}:
\begin{equation}
    \label{sec2_deGennes}
    \kappa(\phi)=\frac{1}{36\phi(1-\phi)}.
\end{equation}
Here we follow the notations defined in \cite{LiXiao2014,ZhaiDan2013}.
We denote by $\chi$ the Huggins interaction parameter,
by $N$ the degree of polymerization of the polymer chains,
and by $M$, which does not appear explicitly in (\ref{sec2_reticular_free}), the relative volume of one macromolecular microsphere.
The other numbers $\alpha,\beta,\tau$ and $\rho$ depend on $M$ and $N$ according to
$$\alpha=\pi\Big(\sqrt{\frac{M}{\pi}}+\frac{N}{2}\Big)^2,\quad\beta=\frac{\alpha}{\sqrt{\pi M}},\quad\tau=\sqrt{\pi M}N,\quad\rho=1+\frac{M}{\tau}.$$
All these parameters are positive.
Besides, $\rho$ is a little greater than one, and thus $\phi\in(0,1/\rho)\subset(0,1)$.
We will declare the values of $M,N$ and $\chi$ in section 5.

In \cite{LiXiao2014}, by using the fact that the derivative and expectation operations can be exchanged,
it is proved that expressing the noise term as $\xi=-\sqrt{2D}\,\nabla\cdot\bet$ leads to the relation (\ref{sec1_fluctuation_dissipation}),
where $\bet=\big(\eta_1(\br,t),\eta_2(\br,t)\big)^T$. Here $\eta_1$ and $\eta_2$ are independent two-dimensional space-time Gaussian white noises, satisfying
$\E[\eta_l(\br,t)]=0$ and $\E[\eta_l(\br_1,t_1)\eta_l(\br_2,t_2)]=\delta(\br_1-\br_2)\delta(t_1-t_2)$, $l=1,2$.
In the rest of the paper, we set $D=1$ for the normalization.

We note the energy non-increasing property of the Cahn-Hilliard equation without the noise term, which is mentioned in the introduction, i.e.,
$$\daoshu{}{t}U(\phi(t))\le 0,\quad t>0.$$
So we hope to develop a numerical scheme inheriting such a property strictly.

\section{Unconditionally energy stable difference scheme}

In essence, the convex splitting method is to approximate the original non-convex energy problem by a convex energy problem.
The foundation is a convex splitting of the non-convex energy, which turns out to be a sum of convex and concave parts.
In this section, we will give a convex splitting of the energy (\ref{sec2_energyfunctional}) and an unconditionally energy stable scheme based on it.
We concentrate mainly on the difference scheme in the non-stochastic case, namely, $\varepsilon=0$.
The similar difference scheme of the stochastic case will be illustrated in the last part of this section.

\subsection{Discrete-time, continuous-space scheme}

The following proposition is a preliminary to the existence of a convex splitting.

\begin{proposition}
\label{sec3_lemma_convex}
{\rm(1)} $S$ and $-H$ are both convex in $(0,1/\rho)$, where $S$ and $H$ are defined by {\rm(\ref{sec2_reticular_free})};

{\rm(2)} $K(u,v):=\kappa(u)v^2$ is convex in $(0,1/\rho)\times\R$, where $\kappa$ is defined by {\rm(\ref{sec2_deGennes})}.
\end{proposition}

\begin{proof}
For (1), differentiating $S$ and $H$ twice, we obtain
$$S''(\phi)=\Big(\frac{1}{\tau}+\frac{1}{N}\Big)\frac{1}{\phi}+\frac{\rho^2}{1-\rho\phi},\quad H''(\phi)=-2\chi\rho.$$
When $\phi\in(0,1/\rho)$, we have $S''(\phi)>0$ and $H''(\phi)<0$.

For (2), by some careful calculations, we obtain the Hessian of $K$:
\[
   \nabla^2K=
   \begin{pmatrix}
    \dfrac{(3u^2-3u+1)v^2}{18u^3(1-u)^3} & \dfrac{(2u-1)v}{18u^2(1-u)^2} \\
    \dfrac{(2u-1)v}{18u^2(1-u)^2} & \dfrac{1}{18u(1-u)} \\
   \end{pmatrix}.
\]
The first-order principal minors of the matrix $\nabla^2K$ are
$$D_1=\frac{(3u^2-3u+1)v^2}{18u^3(1-u)^3},\quad D_2=\frac{1}{18u(1-u)}.$$
The second-order principal minor is
$$D_{12}=\det(\nabla^2K)=\frac{(3u^2-3u+1)v^2-(2u-1)^2v^2}{18^2u^4(1-u)^4}=\frac{v^2}{18^2u^3(1-u)^3}.$$
These principal minors are all non-negative when $u\in(0,1/\rho)$ and $v\in\R$.
The Hessian $\nabla^2K$ is positive semi-definite and thus $K$ is convex in $(0,1/\rho)\times\R$.
\end{proof}

\begin{lemma}[Existence of a convex splitting]
\label{sec3_convex_split_exist}
Assume that $\phi:\Omega\to(0,1/\rho)$ is smooth enough.
Defining
\begin{equation}
    \label{sec3_energy_convex_split}
    U_c(\phi)=\int_\Omega\Big(S(\phi)+\kappa(\phi)|\nabla\phi|^2\Big)\,\d\br,\quad U_e(\phi)=-\int_\Omega H(\phi)\,\d\br,
\end{equation}
we have $U(\phi)=U_c(\phi)-U_e(\phi)$ with $U_c$ and $U_e$ both convex.
\end{lemma}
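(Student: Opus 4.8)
The plan is to reduce the convexity of the two functionals $U_c$ and $U_e$ to the pointwise convexity statements already established in Proposition \ref{sec3_lemma_convex}, using the elementary principle that integrating a pointwise convex integrand composed with the linear map $\phi\mapsto(\phi,\nabla\phi)$ produces a convex functional. First I would record the decomposition identity: adding the two definitions in \eqref{sec3_energy_convex_split} gives $U_c(\phi)-U_e(\phi)=\int_\Omega\big(S(\phi)+\kappa(\phi)|\nabla\phi|^2\big)\,\d\br+\int_\Omega H(\phi)\,\d\br$, which is precisely $U(\phi)$ as defined in \eqref{sec2_energyfunctional}. This step requires nothing more than collecting terms.

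For the convexity claims, the crucial preliminary observation is that the admissible range $(0,1/\rho)$ is an interval, hence convex: if $\phi_0,\phi_1$ both take values in $(0,1/\rho)$ and $\lambda\in[0,1]$, then $\phi_\lambda:=\lambda\phi_0+(1-\lambda)\phi_1$ does so as well, so every integrand remains well-defined along the whole segment. I would then treat $U_e$ first, as the simpler case. Since $-H$ is convex on $(0,1/\rho)$ by Proposition \ref{sec3_lemma_convex}(1), the pointwise inequality $-H(\phi_\lambda)\le\lambda\big(-H(\phi_0)\big)+(1-\lambda)\big(-H(\phi_1)\big)$ holds at each $\br\in\Omega$, and integrating over $\Omega$ yields the convexity of $U_e(\phi)=\int_\Omega\big(-H(\phi)\big)\,\d\br$.

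The substance lies in $U_c$, whose integrand also depends on $\nabla\phi$. Here I would exploit the linearity of the gradient, $\nabla\phi_\lambda=\lambda\nabla\phi_0+(1-\lambda)\nabla\phi_1$, so that at each point the full argument $(\phi_\lambda,\partial_x\phi_\lambda,\partial_y\phi_\lambda)$ is the corresponding convex combination of the arguments at $\phi_0$ and $\phi_1$. Writing the integrand in the two-dimensional case as $S(u)+K(u,p_1)+K(u,p_2)$ with $K(u,v)=\kappa(u)v^2$, I note that $S$ is convex in $u$ by Proposition \ref{sec3_lemma_convex}(1) and $K$ is jointly convex in $(u,v)$ by Proposition \ref{sec3_lemma_convex}(2); each summand is therefore a convex function of the vector $(u,p_1,p_2)$, and a finite sum of convex functions is convex. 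Applying the resulting pointwise convexity inequality to this vector argument and integrating over $\Omega$ gives $U_c(\phi_\lambda)\le\lambda U_c(\phi_0)+(1-\lambda)U_c(\phi_1)$.

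The point I expect to be the genuine crux, and the reason Proposition \ref{sec3_lemma_convex}(2) is stated in terms of the full Hessian rather than merely the convexity of $\kappa$, is the gradient term: separate convexity of $\kappa(u)$ in $u$ and of $v^2$ in $v$ does \emph{not} suffice, because the convex-combination argument mixes the two variables and produces the cross term governed by $\partial_u\partial_v K$. Only the joint convexity of $K(u,v)=\kappa(u)v^2$, i.e. the positive semidefiniteness of $\nabla^2K$ verified in the Proposition, controls that cross term and allows the pointwise inequality to close. Once this is in hand, the remaining arguments are routine.
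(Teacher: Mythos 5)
Your proposal is correct and follows essentially the same route as the paper's own proof: both reduce the claim to the pointwise convexity results of Proposition \ref{sec3_lemma_convex} by writing the integrand of $U_c$ as $S(u)+K(u,v)+K(u,w)$ with $K(u,v)=\kappa(u)v^2$, exploiting the linearity of $\phi\mapsto(\phi,\nabla\phi)$, and integrating the pointwise convexity inequality over $\Omega$. Your closing observation that the \emph{joint} convexity of $K$ (rather than separate convexity in each variable) is the essential ingredient is a fair reading of why the paper computes the full Hessian of $K$, though the paper leaves this point implicit.
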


\begin{proof}
The splitting $U(\phi)=U_c(\phi)-U_e(\phi)$ is obvious.
We show the convexity of $U_c$ below.
Defining$$e_c(\bu)=S(u)+K(u,v)+K(u,w),\quad e_e(\bu)=-H(u),\quad\bu=(u,v,w)\in\R^3,$$
we have$$U_c(\phi)=\int_\Omega e_c(\phi,\phi_x,\phi_y)\,\d\br,\quad U_e(\phi)=\int_\Omega e_e(\phi,\phi_x,\phi_y)\,\d\br.$$
Proposition \ref{sec3_lemma_convex} suggests that $e_c$ and $e_e$ are convex in $(0,1/\rho)\times\R\times\R$,
namely, $\forall\bu,\bv\in(0,1/\rho)\times\R\times\R$, $\forall\lambda\in(0,1)$,
$$e_c(\lambda\bu+(1-\lambda)\bv)\le\lambda e_c(\bu)+(1-\lambda)e_c(\bv).$$
Setting $\bu=(\phi,\phi_x,\phi_y)$ and $\bv=(\psi,\psi_x,\psi_y)$ and integrating this inequality, we obtain
$$U_c(\lambda\phi+(1-\lambda)\psi)\le\lambda U_c(\phi)+(1-\lambda)U_c(\psi),$$
which suggests that $U_c$ is convex.
Similarly, we know that $U_e$ is convex, too.
\end{proof}

The following estimate is the foundation of the energy stability.
The proof, same as that of Theorem 1.1 in \cite{Wise2009}, is independent on the specific form of $U(\phi)$ and so is omitted.

\begin{lemma}
\label{sec3_estimate}
Assume that $\phi,\psi:\Omega\to\R$ are periodic and smooth enough.
If $U=U_c-U_e$ gives a convex splitting, then
\begin{equation}
    \label{sec3_convex_split_estimate}
    U(\phi)-U(\psi)\le(\delta_\phi U_c(\phi)-\delta_\phi U_e(\psi),\phi-\psi)_{L^2},
\end{equation}
where $\delta_\phi$ denotes the variational derivative.
\end{lemma}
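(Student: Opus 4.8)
The plan is to exploit the defining property of convexity directly, since the inequality \eqref{sec3_convex_split_estimate} is nothing more than the first-order convexity inequality applied to the two functionals $U_c$ and $U_e$ separately and then subtracted. First I would recall that for any convex functional $G$ with variational derivative $\delta_\phi G$, the standard tangent-line (supporting-hyperplane) inequality holds: $G(\phi)-G(\psi)\ge(\delta_\phi G(\psi),\phi-\psi)_{L^2}$ for all admissible $\phi,\psi$. This is the infinite-dimensional analogue of $g(x)-g(y)\ge g'(y)(x-y)$ for convex $g$, and it follows from the convexity already established in Lemma \ref{sec3_convex_split_exist} by considering the scalar function $t\mapsto G(\psi+t(\phi-\psi))$ on $[0,1]$, whose derivative at $t=0$ is $(\delta_\phi G(\psi),\phi-\psi)_{L^2}$ and which is convex, hence lies above its tangent at $t=0$.

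Next I would apply this tangent-line inequality twice. Applying it to the convex functional $U_c$, but with the roles of $\phi$ and $\psi$ reversed (that is, evaluating the tangent line at $\phi$ rather than at $\psi$), gives
\begin{equation*}
U_c(\psi)-U_c(\phi)\ge(\delta_\phi U_c(\phi),\psi-\phi)_{L^2},
\end{equation*}
which rearranges to $U_c(\phi)-U_c(\psi)\le(\delta_\phi U_c(\phi),\phi-\psi)_{L^2}$. Applying the tangent-line inequality to the convex functional $U_e$ in the usual orientation (tangent at $\psi$) gives
\begin{equation*}
U_e(\phi)-U_e(\psi)\ge(\delta_\phi U_e(\psi),\phi-\psi)_{L^2}.
\end{equation*}
Subtracting the second inequality from the first and using $U=U_c-U_e$ then yields
\begin{equation*}
U(\phi)-U(\psi)=\big(U_c(\phi)-U_c(\psi)\big)-\big(U_e(\phi)-U_e(\psi)\big)\le(\delta_\phi U_c(\phi)-\delta_\phi U_e(\psi),\phi-\psi)_{L^2},
\end{equation*}
which is exactly \eqref{sec3_convex_split_estimate}.

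The main obstacle, and the reason the argument deserves care rather than being purely formal, is justifying that the scalar function $t\mapsto G(\psi+t(\phi-\psi))$ is differentiable with derivative given by the $L^2$ pairing against $\delta_\phi G$. For the quadratic-gradient and logarithmic terms appearing in $U_c$ one must differentiate under the integral sign and integrate by parts (using the periodic boundary conditions to discard boundary contributions), so the smoothness and periodicity hypotheses on $\phi,\psi$ are precisely what make these manipulations legitimate; one also needs the convex combination $\psi+t(\phi-\psi)$ to remain in the admissible range $(0,1/\rho)$ so that $S$ and $\kappa$ stay well-defined along the segment. Since the excerpt explicitly states that this estimate is proved exactly as Theorem 1.1 in \cite{Wise2009} and is independent of the specific form of $U$, I would not reproduce these technical verifications in full; the essential content is the two-sided application of the tangent-line inequality to the convex pieces, and the regularity assumptions are inherited wholesale from the cited abstract framework.
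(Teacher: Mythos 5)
Your proof is correct and takes essentially the same route as the paper: although the paper omits the proof of this continuous lemma (deferring to Theorem 1.1 of \cite{Wise2009}), its proof of the discrete analogue in Section 3.5 is precisely your argument---the one-dimensional tangent-line inequality obtained from convexity of $t\mapsto G$ along a segment, applied to the contractive part at $\phi$ and the expansive part at $\psi$, then combined with a sign flip. Your additional remarks on differentiating under the integral, periodicity, and keeping the segment inside $(0,1/\rho)$ correctly identify the technical hypotheses the cited framework relies on.
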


Given a time step $s>0$.
For the MMC-TDGL equation with $\varepsilon=0$, we construct the discrete-time, continuous-space scheme of (\ref{sec1_MMC_TDGL}) as follow:
\begin{equation}
    \label{sec3_semi_discrete_scheme}
    \frac{\phi^{k+1}-\phi^k}{s}=\Delta\tilde{\mu},\quad\tilde{\mu}(\phi^{k+1},\phi^k):=\delta_\phi U_c(\phi^{k+1})-\delta_\phi U_e(\phi^k).
\end{equation}

\begin{theorem}
\label{sec3_semidiscrete_stability}
The scheme {\rm(\ref{sec3_semi_discrete_scheme})} is unconditionally energy stable,
meaning that for any time step $s>0$, we always have $$U(\phi^{k+1})\le U(\phi^k).$$
\end{theorem}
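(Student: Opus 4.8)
The plan is to read off the result almost directly from the convex-splitting estimate in Lemma \ref{sec3_estimate}, using the precise form of the scheme and the periodic boundary conditions. First I would apply \eqref{sec3_convex_split_estimate} with $\phi=\phi^{k+1}$ and $\psi=\phi^k$. Since $U=U_c-U_e$ is a genuine convex splitting by Lemma \ref{sec3_convex_split_exist}, this yields
$$U(\phi^{k+1})-U(\phi^k)\le\big(\delta_\phi U_c(\phi^{k+1})-\delta_\phi U_e(\phi^k),\,\phi^{k+1}-\phi^k\big)_{L^2}=\big(\tilde\mu,\,\phi^{k+1}-\phi^k\big)_{L^2},$$
where the final equality is nothing but the definition of $\tilde\mu$ in \eqref{sec3_semi_discrete_scheme}. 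The whole point of choosing $\delta_\phi U_c$ at the new level and $\delta_\phi U_e$ at the old level in the scheme is precisely to make the right-hand side of the estimate coincide with the discrete chemical potential $\tilde\mu$.

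Next I would substitute the update relation $\phi^{k+1}-\phi^k=s\,\Delta\tilde\mu$ from \eqref{sec3_semi_discrete_scheme} into this inner product, which collapses the right-hand side to $s\,(\tilde\mu,\Delta\tilde\mu)_{L^2}$. Integrating by parts and discarding the boundary term by periodicity gives $(\tilde\mu,\Delta\tilde\mu)_{L^2}=-\|\nabla\tilde\mu\|_{L^2}^2\le0$. Combining the two steps,
$$U(\phi^{k+1})-U(\phi^k)\le -s\,\|\nabla\tilde\mu\|_{L^2}^2\le0,$$
which is exactly the claimed inequality, and crucially the bound holds for every $s>0$ with no smallness restriction, so the stability is genuinely unconditional.

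The hard part is not the algebra but verifying the hypotheses under which Lemma \ref{sec3_estimate} may be invoked. The convexity in Proposition \ref{sec3_lemma_convex}, and hence the splitting of Lemma \ref{sec3_convex_split_exist}, holds only for arguments valued in $(0,1/\rho)$, so I must know that $\phi^k$ and the new iterate $\phi^{k+1}$ both take values in this interval and are smooth and periodic enough; the existence of such a $\phi^{k+1}$ is the content of the unique-solvability statement and would be taken as given here. I would also confirm that the integration by parts producing $-\|\nabla\tilde\mu\|_{L^2}^2$ leaves no boundary contribution, which follows from the periodicity of $\tilde\mu$ and $\nabla\tilde\mu$ on $\Omega$. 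Once these qualitative facts are secured, the estimate is entirely $s$-independent, and the stability follows.
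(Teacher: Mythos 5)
Your proof is correct and follows essentially the same route as the paper: apply the convex-splitting estimate \eqref{sec3_convex_split_estimate} with $\phi=\phi^{k+1}$, $\psi=\phi^k$, substitute the update relation $\phi^{k+1}-\phi^k=s\Delta\tilde\mu$, and integrate by parts using periodicity to obtain $-s\|\nabla\tilde\mu\|_{L^2}^2\le 0$. Your additional remarks on verifying the hypotheses (range of $\phi$ in $(0,1/\rho)$, periodicity, and existence of $\phi^{k+1}$ via unique solvability) are sound points that the paper leaves implicit, but they do not change the argument.
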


\begin{proof}
By choosing $\phi=\phi^{k+1}$ and $\psi=\phi^k$ in (\ref{sec3_convex_split_estimate}), we obtain
\begin{align*}
    U(\phi^{k+1})-U(\phi^k) & \le(\delta_\phi U_c(\phi^{k+1})-\delta_\phi U_e(\phi^k),\phi^{k+1}-\phi^k)_{L^2}
    =s(\tilde{\mu},\Delta\tilde{\mu})_{L^2}=-s\|\nabla\tilde{\mu}\|_{L^2}^2\le 0,
\end{align*}
which completes the proof.
\end{proof}

A theoretical analysis for the energy stability based on a convex splitting of the continuous energy is provided above.
For the numerical simulation, however, we need to work on a discretized space.
In the following several parts of this section, we will construct a discrete energy with a convex splitting,
and develop a fully discrete and unconditionally energy stable scheme corresponding to the discrete energy.

\subsection{Discretization of two-dimensional space}

Here we use the similar notations introduced in \cite{Wise2010_2,Wise2009}.
Let $h_x=L_x/m$, $h_y=L_y/n$, where $m,n\in\N$.
Define the $x$-direction mesh $x_i=(i-\frac{1}{2})h_x$, $i\in\Z$  with respect to $(0,L_x)$ and corresponding node sets
$$E_m=\{x_{i+\frac{1}{2}}\,|\,i=0,1,\dots,m\},\ C_m=\{x_i\,|\,i=1,2,\dots,m\},\ C_{\overline{m}}=\{x_i\,|\,i=0,1,\dots,m+1\}.$$
Similarly, we can define the $y$-direction mesh $y_j$ and node sets $E_n,C_n,C_{\overline{n}}$ with respect to $(0,L_y)$.
Define the function spaces
\begin{align*}
    \mathcal{C}_{m\times n}=\{\phi:C_m\times C_n\to\R\},\quad\mathcal{C}_{\overline{m}\times\overline{n}}=\{\phi:C_{\overline{m}}\times C_{\overline{n}}\to\R\},\\
    \mathcal{C}_{\overline{m}\times n}=\{\phi:C_{\overline{m}}\times C_n\to\R\},\quad\mathcal{C}_{m\times\overline{n}}=\{\phi:C_m\times C_{\overline{n}}\to\R\},\\
    \mathcal{E}_{m\times n}^{\mathrm{ew}}=\{f:E_m\times C_n\to\R\},\quad\mathcal{E}_{m\times n}^{\mathrm{ns}}=\{f:C_m\times E_n\to\R\}.
\end{align*}
The functions in $\mathcal{C}_{m\times n}$, $\mathcal{C}_{\overline{m}\times\overline{n}}$,
$\mathcal{C}_{\overline{m}\times n}$ and $\mathcal{C}_{m\times\overline{n}}$ are called cell-centered functions,
and are denoted by the Greek symbols $\phi,\psi$ with $\phi_{i,j}=\phi(x_i,y_j)$.
The functions in $\mathcal{E}_{m\times n}^{\mathrm{ew}}$ and $\mathcal{E}_{m\times n}^{\mathrm{ns}}$
are called east-west edge-centered functions and north-south edge-centered functions, respectively, and are denoted by the English symbols $f,g$.
For the east-west edge-centered function $f$, we let $f_{i+\frac{1}{2},j}=f(x_{i+\frac{1}{2}},y_j)$;
for the north-south edge-centered function $g$, we let $g_{i,j+\frac{1}{2}}=g(x_i,y_{j+\frac{1}{2}})$.
We say a cell-centered function $\phi\in\mathcal{C}_{\overline{m}\times\overline{n}}$ is periodic if and only if
\begin{align*}
    \phi_{m+1,j}=\phi_{1,j},\quad\phi_{0,j}=\phi_{m,j},\quad j=1,2,\dots,n,\\
    \phi_{i,n+1}=\phi_{i,1},\quad\phi_{i,0}=\phi_{i,n},\quad i=0,1,\dots,m+1.
\end{align*}

Now we define some operators on the function spaces.
The edge-to-center averages and differences,
$a_x,d_x:\mathcal{E}_{m\times n}^{\mathrm{ew}}\to\mathcal{C}_{m\times n}$
and $a_y,d_y:\mathcal{E}_{m\times n}^{\mathrm{ns}}\to\mathcal{C}_{m\times n}$;
the center-to-edge averages and differences,
$A_x,D_x:\mathcal{C}_{\overline{m}\times n}\to\mathcal{E}_{m\times n}^{\mathrm{ew}}$
and $A_y,D_y:\mathcal{C}_{m\times\overline{n}}\to\mathcal{E}_{m\times n}^{\mathrm{ns}}$;
and the two-dimensional discrete Laplacian, $\Delta_h:\mathcal{C}_{\overline{m}\times\overline{n}}\to\mathcal{C}_{m\times n}$, are defined componentwise by
\begin{align*}
& a_xf_{i,j}=\frac{1}{2}(f_{i+\frac{1}{2},j}+f_{i-\frac{1}{2},j}),\quad d_xf_{i,j}=\frac{1}{h_x}(f_{i+\frac{1}{2},j}-f_{i-\frac{1}{2},j}),
    \quad\substack{i=1,2,\dots,m\\j=1,2,\dots,n}~,\\
& a_yg_{i,j}=\frac{1}{2}(g_{i,j+\frac{1}{2}}+g_{i,j-\frac{1}{2}}),\quad d_yg_{i,j}=\frac{1}{h_y}(g_{i,j+\frac{1}{2}}-g_{i,j-\frac{1}{2}}),
    \quad\substack{i=1,2,\dots,m\\j=1,2,\dots,n}~,\\
& A_x\phi_{i+\frac{1}{2},j}=\frac{1}{2}(\phi_{i+1,j}+\phi_{i,j}),\quad D_x\phi_{i+\frac{1}{2},j}=\frac{1}{h_x}(\phi_{i+1,j}-\phi_{i,j}),
    \quad\substack{i=0,1,\dots,m\\j=1,2,\dots,n}~,\\
& A_y\phi_{i,j+\frac{1}{2}}=\frac{1}{2}(\phi_{i,j+1}+\phi_{i,j}),\quad D_y\phi_{i,j+\frac{1}{2}}=\frac{1}{h_y}(\phi_{i,j+1}-\phi_{i,j}),
    \quad\substack{i=1,2,\dots,m\\j=0,1,\dots,n}~,\\
& \Delta_h\phi_{i,j}=d_x(D_x\phi)_{i,j}+d_y(D_y\phi)_{i,j},\quad\substack{i=1,2,\dots,m\\j=1,2,\dots,n}~.
\end{align*}
The weighted inner-product $(\cdot,\cdot)_h$, $[\cdot,\cdot]_{\mathrm{ew}}$ and $[\cdot,\cdot]_{\mathrm{ns}}$ are defined as follow:
\begin{align*}
(\phi,\psi)_h & =h_xh_y\sum_{i=1}^m\sum_{j=1}^n\phi_{i,j}\psi_{i,j},
    \quad\phi,\psi\in\mathcal{C}_{m\times n},\\
[f,g]_{\mathrm{ew}} & =h_xh_y\sum_{i=1}^m\sum_{j=1}^na_x(fg)_{i,j},
    \quad f,g\in\mathcal{E}_{m\times n}^{\mathrm{ew}},\\
[f,g]_{\mathrm{ns}} & =h_xh_y\sum_{i=1}^m\sum_{j=1}^na_y(fg)_{i,j},
    \quad f,g\in\mathcal{E}_{m\times n}^{\mathrm{ns}}.
\end{align*}
The following proposition follows directly and the proof is omitted.

\begin{proposition}
\label{sec3_discrete_formula}
Assume that $\phi,\psi\in\mathcal{C}_{\overline{m}\times\overline{n}}$, $f\in\mathcal{E}_{m\times n}^{\mathrm{ew}}$,
$g\in\mathcal{E}_{m\times n}^{\mathrm{ns}}$ and $\phi,\psi$ are periodic, then

{\rm(1)}~$[f,A_x\phi]_{\mathrm{ew}}=(a_xf,\phi)_h$, $[f,D_x\phi]_{\mathrm{ew}}=-(d_xf,\phi)_h$;

{\rm(2)}~$[g,A_y\phi]_{\mathrm{ns}}=(a_yg,\phi)_h$, $[g,D_y\phi]_{\mathrm{ns}}=-(d_yg,\phi)_h$;

{\rm(3)}~$(\phi,\Delta_h\psi)_h=-[D_x\phi,D_x\psi]_{\mathrm{ew}}-[D_y\phi,D_y\psi]_{\mathrm{ns}}=(\Delta_h\phi,\psi)_h$.
\end{proposition}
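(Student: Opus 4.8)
The plan is to read (1)--(3) as discrete summation-by-parts (discrete Green's) identities and to prove them by expanding every inner product and every edge product through the defining formulas, then collapsing both sides to one and the same sum over the grid. The only non-routine manoeuvre is a shift of the summation index that transfers a factor from one cell (or edge) to its neighbour; the wrap-around terms this generates must cancel, and they do so because the grid is periodic. Before anything else I would record the single fact that powers the whole argument: on the periodic $x$-grid the edge $x_{1/2}$ coincides with $x_{m+1/2}$, while $\phi_0=\phi_m$ and $\phi_{m+1}=\phi_1$, so every edge function obtained from a periodic cell function by $A_x$ or $D_x$ is itself periodic (its value at $x_{1/2}$ equals its value at $x_{m+1/2}$). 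This is exactly why it suffices to assume periodicity of $\phi,\psi$ alone, and it is what makes the boundary contributions vanish.

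For (1) I would first isolate the elementary reduction that for a periodic edge function $F$ one has $\sum_{i=1}^m a_xF_{i,j}=\sum_{i=1}^m F_{i-\frac12,j}$, the $(m+1)$-st edge folding onto the first. Applying it with $F=f\,A_x\phi$ turns $[f,A_x\phi]_{\mathrm{ew}}$ into $h_xh_y\sum_{i,j}f_{i-\frac12,j}\frac{1}{2}(\phi_{i,j}+\phi_{i-1,j})$; splitting the two terms and shifting $i\mapsto i+1$ in the one carrying $\phi_{i-1,j}$ (legitimate by periodicity) recombines them into $h_xh_y\sum_{i,j}\frac{1}{2}(f_{i+\frac12,j}+f_{i-\frac12,j})\phi_{i,j}=(a_xf,\phi)_h$. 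The same reduction with $F=f\,D_x\phi$ produces $h_y\sum_{i,j}f_{i-\frac12,j}(\phi_{i,j}-\phi_{i-1,j})$; the identical index shift now realises the discrete Abel summation, and the difference $f_{i-\frac12}-f_{i+\frac12}$ that emerges is precisely $-h_x\,d_xf_{i,j}$, giving $-(d_xf,\phi)_h$ together with the expected sign change.

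Part (2) requires nothing new: it is (1) verbatim under the substitution $x\mapsto y$, $m\mapsto n$, $\mathrm{ew}\mapsto\mathrm{ns}$, with $a_y,d_y,A_y,D_y$ in place of their $x$-counterparts. Part (3) I would then deduce from (1)--(2) rather than recompute from scratch. Writing $\Delta_h\psi=d_x(D_x\psi)+d_y(D_y\psi)$ and using the symmetry of $(\cdot,\cdot)_h$, the second identity of (1) applied with the edge function $f=D_x\psi$ (periodic, since $\psi$ is) yields $(\phi,d_x(D_x\psi))_h=-[D_x\phi,D_x\psi]_{\mathrm{ew}}$, and likewise in $y$; summing gives the middle expression. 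Since that expression is manifestly symmetric under $\phi\leftrightarrow\psi$, equating it both to $(\phi,\Delta_h\psi)_h$ and to $(\Delta_h\phi,\psi)_h$ delivers the two equalities at once.

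The hard part will not be any single estimate but the bookkeeping of the half-integer edge indices and the verification that each wrap-around term at $i=1$ and $i=m$ (respectively $j=1,n$) is annihilated by the periodicity of the grid. The computation is conceptually trivial once the folding relation $\sum_i a_xF_i=\sum_i F_{i-\frac12}$ is in hand, but it is error-prone, so I would do the $x$-direction carefully and then invoke symmetry for $y$ and linearity for $\Delta_h$ to avoid repeating it.
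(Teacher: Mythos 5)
Your overall strategy is the right one, and in fact the paper offers nothing to compare against: it declares that the proposition ``follows directly'' and omits the proof entirely, so an expand--reindex--cancel argument of exactly your kind is what is intended. Your reduction of (3) to the second identities of (1)--(2) with $f=D_x\psi$, $g=D_y\psi$, followed by the symmetry observation, is clean and correct, precisely because $D_x\psi$ and $D_y\psi$ inherit periodicity from $\psi$.

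There is, however, one genuine gap in your treatment of (1)--(2). Your folding relation $\sum_{i=1}^m a_xF_{i,j}=\sum_{i=1}^m F_{i-\frac12,j}$ is stated (correctly) for \emph{periodic} edge functions $F$, but you then apply it to $F=f\,A_x\phi$ and $F=f\,D_x\phi$, where $f$ is the arbitrary element of $\mathcal{E}_{m\times n}^{\mathrm{ew}}$ appearing in the proposition. Periodicity of $\phi$ makes $A_x\phi$ and $D_x\phi$ periodic, but it does nothing for $f$: the product $f\,A_x\phi$ is periodic only if $f_{\frac12,j}=f_{m+\frac12,j}$. The same unproved equality is needed again in your index shift $i\mapsto i+1$, where the wrap-around term $f_{\frac12,j}\phi_{0,j}=f_{\frac12,j}\phi_{m,j}$ must coincide with the missing term $f_{m+\frac12,j}\phi_{m,j}$. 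This is not a removable technicality: if $f_{\frac12,j}\neq f_{m+\frac12,j}$, the identity $[f,A_x\phi]_{\mathrm{ew}}=(a_xf,\phi)_h$ is simply false --- expanding both sides, the coefficients of $\phi_{1,j}$ differ by $\tfrac14 h_xh_y\,(f_{m+\frac12,j}-f_{\frac12,j})$, and likewise for $\phi_{m,j}$. So your opening claim that ``it suffices to assume periodicity of $\phi,\psi$ alone'' cannot be sustained for (1)--(2) as you have set them up. The fix is to make explicit the convention your geometric remark already hints at: the edges $x_{\frac12}$ and $x_{m+\frac12}$ are identified on the periodic grid, i.e.\ the edge functions in the proposition are additionally required to satisfy $f_{\frac12,j}=f_{m+\frac12,j}$ and $g_{i,\frac12}=g_{i,n+\frac12}$. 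This costs nothing for the paper, since every edge function it ever feeds into these identities (such as $D_x\mu^{k+1}$ in the energy-stability proof, or $A_x\kappa(\phi)\,D_x\phi$ in the scheme) is built from periodic cell functions by $A_x$, $D_x$ and pointwise products, hence is periodic in this sense; but the hypothesis must be stated, since as literally written (arbitrary $f$, $g$) the claim your proof purports to establish is false.
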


\subsection{A convex splitting of the discrete energy}

With the preparation above, we turn to discuss the discrete energy and the fully discrete scheme.
Define the discrete energy $F:\mathcal{C}_{\overline{m}\times\overline{n}}\to\R$ as
\begin{equation}
\label{sec3_discrete_energy}
F(\phi)=h_xh_y\sum_{i=1}^m\sum_{j=1}^n\Big(S(\phi_{i,j})+H(\phi_{i,j})
+\kappa(\phi_{i,j})\big(a_x((D_x\phi)^2)_{i,j}+a_y((D_y\phi)^2)_{i,j}\big)\Big).
\end{equation}

\begin{lemma}[Existence of a convex splitting]
\label{sec3_discrete_convex_split_exist}
Assume that $\phi\in\mathcal{C}_{\overline{m}\times\overline{n}}$ is periodic.
Defining
\begin{align}
F_c(\phi) & =h_xh_y\sum_{i=1}^m\sum_{j=1}^n\Big(S(\phi_{i,j})
+\kappa(\phi_{i,j})\big(a_x((D_x\phi)^2)_{i,j}+a_y((D_y\phi)^2)_{i,j}\big)\Big),\label{sec3_discrete_energy_convex_split1}\\
F_e(\phi) & =-h_xh_y\sum_{i=1}^m\sum_{j=1}^nH(\phi_{i,j}),\label{sec3_discrete_energy_convex_split2}
\end{align}
we have $F(\phi)=F_c(\phi)-F_e(\phi)$ with $F_c$ and $F_e$ both convex.
\end{lemma}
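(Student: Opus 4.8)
The plan is to regard $F_c$ and $F_e$ as functions of the finite collection of nodal values $(\phi_{i,j})$, i.e. as functions on the finite-dimensional space of periodic cell-centered grid functions, and to prove convexity on the convex box $\prod_{i,j}(0,1/\rho)$ on which $S$ and $\kappa$ are defined. Throughout I would lean on two elementary facts: convexity is preserved under precomposition with a linear (affine) map, and a sum with nonnegative weights (here $h_xh_y>0$) of convex functions is convex. Together with Proposition \ref{sec3_lemma_convex}, these reduce the whole lemma to recognizing each summand as a convex building block.

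The term $F_e$ is immediate: it equals $h_xh_y\sum_{i,j}\big(-H(\phi_{i,j})\big)$, and since $-H$ is convex on $(0,1/\rho)$ by part (1) of Proposition \ref{sec3_lemma_convex}, each summand is convex in the single coordinate $\phi_{i,j}$, hence in the full vector $\phi$; summing preserves convexity. In $F_c$, the nodal terms $S(\phi_{i,j})$ are handled identically using the convexity of $S$. The real content is the gradient-type term, which I would expand by writing out the edge-centered operators. Using $D_x\phi_{i+\frac12,j}=(\phi_{i+1,j}-\phi_{i,j})/h_x$ together with the definition of $a_x$ (and the $y$-analogues), one obtains
\begin{align*}
\kappa(\phi_{i,j})\big(a_x((D_x\phi)^2)_{i,j}+a_y((D_y\phi)^2)_{i,j}\big)
&=\tfrac12 K\Big(\phi_{i,j},\tfrac{\phi_{i+1,j}-\phi_{i,j}}{h_x}\Big)+\tfrac12 K\Big(\phi_{i,j},\tfrac{\phi_{i,j}-\phi_{i-1,j}}{h_x}\Big)\\
&\quad+\tfrac12 K\Big(\phi_{i,j},\tfrac{\phi_{i,j+1}-\phi_{i,j}}{h_y}\Big)+\tfrac12 K\Big(\phi_{i,j},\tfrac{\phi_{i,j}-\phi_{i,j-1}}{h_y}\Big),
\end{align*}
where $K(u,v)=\kappa(u)v^2$ is the function from Proposition \ref{sec3_lemma_convex}(2).

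Each of these four pieces is $K$ evaluated at the image of $\phi$ under a linear map such as $\phi\mapsto\big(\phi_{i,j},(\phi_{i+1,j}-\phi_{i,j})/h_x\big)$, which lands in $(0,1/\rho)\times\R$ whenever the nodal values lie in $(0,1/\rho)$. Since $K$ is convex on $(0,1/\rho)\times\R$ by part (2) of Proposition \ref{sec3_lemma_convex}, precomposition with these linear maps keeps every piece convex in $\phi$. Summing the $S$-terms and all four $K$-pieces over the grid with the positive weight $h_xh_y$ then shows $F_c$ is convex, while the identity $F=F_c-F_e$ is read off directly from the definitions. The only point requiring care — and the main obstacle to a clean write-up — is the discrete bookkeeping: verifying that the averaged squared differences $a_x((D_x\phi)^2)$ and $a_y((D_y\phi)^2)$ genuinely reorganize into the four stated $K$-terms, using periodicity to identify the ghost values $\phi_{0,j},\phi_{m+1,j},\phi_{i,0},\phi_{i,n+1}$ with the $mn$ genuine unknowns so that each map is well defined and linear, and confirming that the first argument of every $K$ is the same nodal value $\phi_{i,j}\in(0,1/\rho)$, so that Proposition \ref{sec3_lemma_convex}(2) applies on the correct domain.
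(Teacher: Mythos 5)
Your proposal is correct and follows essentially the same route as the paper: both rewrite the averaged squared-difference terms as $\tfrac12 K(\phi_{i,j},D_x\phi_{i\pm\frac12,j})+\tfrac12 K(\phi_{i,j},D_y\phi_{i,j\pm\frac12})$ and invoke Proposition \ref{sec3_lemma_convex} to conclude that $F_c$ and $F_e$ are (weighted) sums of convex building blocks. Your added care about precomposition with linear maps and the periodic identification of ghost values simply makes explicit what the paper compresses into ``linear combinations of some convex functions.''
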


\begin{proof}
Let $K(u,v)$ be the function defined in Proposition \ref{sec3_lemma_convex}~(2), then
\begin{align*}
F_c(\phi) & =h_xh_y\sum_{i=1}^m\sum_{j=1}^n\Big(S(\phi_{i,j})
+\frac{1}{2}K(\phi_{i,j},D_x\phi_{i+\frac{1}{2},j})+\frac{1}{2}K(\phi_{i,j},D_x\phi_{i-\frac{1}{2},j})\\
& \qquad\qquad\qquad\qquad +\frac{1}{2}K(\phi_{i,j},D_y\phi_{i,j+\frac{1}{2}})+\frac{1}{2}K(\phi_{i,j},D_y\phi_{i,j-\frac{1}{2}})\Big).
\end{align*}
From Proposition \ref{sec3_lemma_convex}, $F_c$ and $F_e$ are linear combinations of some convex functions, and thus convex.
\end{proof}

Here we give the expressions of the variational derivatives
$\delta_\phi F_c(\phi)$ and $\delta_\phi F_e(\phi)$ as follows:
\begin{align*}
\delta_\phi F_c(\phi)
& =S'(\phi)+\kappa'(\phi)\big(a_x((D_x\phi)^2)+a_y((D_y\phi)^2)\big)-2d_x\big(A_x\kappa(\phi)D_x\phi\big)-2d_y\big(A_y\kappa(\phi)D_y\phi\big),\\
\delta_\phi F_e(\phi) & =-H'(\phi).
\end{align*}

Now we describe the fully discrete scheme for the MMC-TDGL equation with $\varepsilon=0$.
The scheme is the following:
given $\phi^k\in\mathcal{C}_{\overline{m}\times\overline{n}}$ periodic, find $\phi^{k+1}\in\mathcal{C}_{\overline{m}\times\overline{n}}$ periodic such that
\begin{align}
\phi^{k+1} & -\phi^k=s\Delta_h\mu^{k+1},\label{sec3_full_discrete_scheme_1}\\
\mu^{k+1} & =\delta_\phi F_c(\phi^{k+1})-\delta_\phi F_e(\phi^k)\nonumber\\
    & =S'(\phi^{k+1})+\kappa'(\phi^{k+1})\big(a_x((D_x\phi^{k+1})^2)+a_y((D_y\phi^{k+1})^2)\big)\nonumber\\
    & \qquad -2d_x\big(A_x\kappa(\phi^{k+1})D_x\phi^{k+1}\big)-2d_y\big(A_y\kappa(\phi^{k+1})D_y\phi^{k+1}\big)+H'(\phi^{k}),\label{sec3_full_discrete_scheme_2}
\end{align}
where
$$S'(\phi)=\Big(\frac{1}{\tau}+\frac{1}{N}\Big)\ln\phi-\rho\ln(1-\rho\phi),\quad H'(\phi)=-2\chi\rho\phi,\quad\kappa'(\phi)=\frac{2\phi-1}{36\phi^2(1-\phi)^2}.$$
Since $\mu$ follows the Laplacian $\Delta_h$, we omit the constants in the expressions $S'(\phi)$ and $H'(\phi)$ above.

So far, we have developed the fully discrete scheme based on a convex splitting of the discrete energy (\ref{sec3_discrete_energy}).
The difference scheme is a system of nonlinear equations with respect to $\phi^{k+1}$, so we have to solve it iteratively.
Before solving it, we analyze the unique solvability and the energy stability of the scheme.

\subsection{Unconditional unique solvability}

Suppose $c\in\R$, and define $\mathcal{M}_c$ to be the whole functions in $\mathcal{C}_{\overline{m}\times\overline{n}}$ with $c$-mean, namely,
$$\mathcal{M}_c=\Big\{\phi\in\mathcal{C}_{\overline{m}\times\overline{n}}\,\Big|\,\frac{1}{mn}\sum_{i=1}^m\sum_{j=1}^n\phi_{i,j}=c\Big\}.$$
We just discuss on the zero-mean function space $\mathcal{M}_0$, because $\phi-c\in\mathcal{M}_0$ provided $\phi\in\mathcal{M}_c$.

\begin{lemma}
\label{sec3_lemma_unique_solvable}
For any $\phi\in\mathcal{M}_0$, there exists a unique periodic $\psi\in\mathcal{M}_0$ such that $-\Delta_h\psi=\phi$.
\end{lemma}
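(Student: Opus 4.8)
The plan is to treat $-\Delta_h$ as a linear operator on the finite-dimensional space of periodic functions and to reduce the claim to the triviality of its kernel on $\mathcal{M}_0$. First I would observe that a periodic $\psi\in\mathcal{C}_{\overline{m}\times\overline{n}}$ is completely determined by its $mn$ interior values, so both the periodic zero-mean functions and their image under $-\Delta_h$ sit inside vector spaces of dimension $mn-1$. The essential preliminary step is to verify that $\Delta_h$ maps periodic functions into $\mathcal{M}_0$: summing $\Delta_h\psi_{i,j}=d_x(D_x\psi)_{i,j}+d_y(D_y\psi)_{i,j}$ over the grid, the $x$- and $y$-sums telescope, and the periodicity conventions fixed for $\mathcal{C}_{\overline{m}\times\overline{n}}$ force the surviving boundary terms to cancel, so that $\sum_{i,j}\Delta_h\psi_{i,j}=0$. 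Consequently $L:=-\Delta_h$ restricts to a well-defined linear endomorphism of $\mathcal{M}_0$.

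Next I would establish injectivity of $L$ on $\mathcal{M}_0$. Suppose $\psi\in\mathcal{M}_0$ is periodic with $-\Delta_h\psi=0$. Pairing with $\psi$ in $(\cdot,\cdot)_h$ and invoking the summation-by-parts identity of Proposition \ref{sec3_discrete_formula}~(3) yields
\[
0=(\psi,-\Delta_h\psi)_h=[D_x\psi,D_x\psi]_{\mathrm{ew}}+[D_y\psi,D_y\psi]_{\mathrm{ns}}.
\]
Each bracket is a sum of squares weighted by the positive factor $h_xh_y$, hence nonnegative, so both terms vanish. This forces $D_x\psi\equiv0$ and $D_y\psi\equiv0$, i.e. $\psi$ is constant across every edge; combined with periodicity, $\psi$ is globally constant. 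The zero-mean constraint then gives $\psi\equiv0$, so $\ker L=\{0\}$.

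Finally, since $L$ is an injective endomorphism of the finite-dimensional space $\mathcal{M}_0$, the rank-nullity theorem makes it a bijection; thus for every $\phi\in\mathcal{M}_0$ there exists a unique periodic $\psi\in\mathcal{M}_0$ with $-\Delta_h\psi=\phi$, which is exactly the assertion. I expect the only genuinely delicate point to be the bookkeeping in the preliminary step, namely confirming that the telescoped boundary contributions to $\sum_{i,j}\Delta_h\psi_{i,j}$ truly cancel under the stated periodicity relations, since this is what guarantees that $L$ has the correct codomain and that the dimension count is legitimate. The injectivity estimate itself is a routine discrete energy argument built directly on Proposition \ref{sec3_discrete_formula}.
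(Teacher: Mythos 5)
Your proof is correct and follows essentially the same route as the paper: both rest on the summation-by-parts identity of Proposition \ref{sec3_discrete_formula}~(3) to show $(L\psi,\psi)_h=[D_x\psi,D_x\psi]_{\mathrm{ew}}+[D_y\psi,D_y\psi]_{\mathrm{ns}}$ vanishes only for constants, then use the zero-mean constraint to kill the constant and conclude invertibility on the finite-dimensional space $\mathcal{M}_0$. The only difference is cosmetic: the paper phrases the conclusion as symmetry plus positive definiteness of $L$, while you phrase it as injectivity plus rank-nullity, and you additionally spell out the telescoping argument showing $-\Delta_h$ maps periodic functions into $\mathcal{M}_0$, a point the paper leaves implicit.
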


\begin{proof}
Let $L=-\Delta_h$.
We show that $L$ is symmetry and positive definite on the periodic zero-mean function space.
The symmetry is suggested by Proposition \ref{sec3_discrete_formula} (3).
Suppose $\psi\in\mathcal{M}_0$ is periodic, then
$$(L(\psi),\psi)_h=[D_x\psi,D_x\psi]_{\mathrm{ew}}+[D_y\psi,D_y\psi]_{\mathrm{ns}}\ge 0.$$
The equality is achieved only if $D_x\psi=0$ and $D_y\psi=0$ at every point, which suggests that $\psi$ is a constant function.
With the restriction that $\psi$ has zero mean, this constant must be zero, which proves that $L$ is positive definite.
\end{proof}

\begin{theorem}[Unique solvability]
\label{sec3_unique_solvable}
The difference scheme {\rm(\ref{sec3_full_discrete_scheme_1})-(\ref{sec3_full_discrete_scheme_2})} is uniquely solvable for any time step $s>0$.
\end{theorem}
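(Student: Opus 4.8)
The plan is to recast the nonlinear system \eqref{sec3_full_discrete_scheme_1}--\eqref{sec3_full_discrete_scheme_2} as the Euler--Lagrange equation of a strictly convex minimization problem, and then to extract unique solvability from convexity together with an interior-attainment argument. First I would record the mass-conservation property: summing \eqref{sec3_full_discrete_scheme_1} over all cells and using that $\sum_{i,j}\Delta_h\mu_{i,j}=0$ for any periodic grid function (the discrete-difference sum telescopes), one gets $\sum_{i,j}\phi^{k+1}_{i,j}=\sum_{i,j}\phi^k_{i,j}$, so $\phi^{k+1}$ is forced to lie in the affine space $\mathcal{M}_c$ with $c$ the mean of $\phi^k$, and $\phi^{k+1}-\phi^k\in\mathcal{M}_0$. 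By Lemma \ref{sec3_lemma_unique_solvable}, $L:=-\Delta_h$ is symmetric positive definite on the periodic zero-mean space, so $L^{-1}$ is well defined there and $\|\cdot\|_{-1}^2:=(\cdot,L^{-1}\cdot)_h$ is a genuine (strictly positive) norm on $\mathcal{M}_0$. I then introduce
\[
\mathcal{G}(\phi)=F_c(\phi)+(H'(\phi^k),\phi)_h+\frac{1}{2s}\|\phi-\phi^k\|_{-1}^2
\]
over the admissible set $\{\phi\in\mathcal{M}_c:0<\phi_{i,j}<1/\rho\ \forall i,j\}$. Computing $\delta_\phi\mathcal{G}$ shows that a constrained critical point satisfies $\delta_\phi F_c(\phi)+H'(\phi^k)+\tfrac1s L^{-1}(\phi-\phi^k)=\mathrm{const}$; applying $L=-\Delta_h$, which annihilates the constant Lagrange multiplier, reproduces exactly \eqref{sec3_full_discrete_scheme_1}--\eqref{sec3_full_discrete_scheme_2} with $\mu^{k+1}=\delta_\phi F_c(\phi^{k+1})-\delta_\phi F_e(\phi^k)$. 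Thus solving the scheme is equivalent to finding an interior critical point of $\mathcal{G}$.

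Next I would establish strict convexity of $\mathcal{G}$ on $\mathcal{M}_c$. The term $F_c$ is convex by Lemma \ref{sec3_discrete_convex_split_exist}, and the linear term $(H'(\phi^k),\phi)_h$ is affine; strict convexity is supplied by the quadratic term, since $\|\cdot\|_{-1}^2$ is strictly convex along every zero-mean direction by positive definiteness of $L^{-1}$. (Alternatively, the contribution $\sum S(\phi_{i,j})$ inside $F_c$ is already strictly convex because $S''>0$ on $(0,1/\rho)$ by Proposition \ref{sec3_lemma_convex}.) Strict convexity on the convex admissible set immediately yields that $\mathcal{G}$ has at most one critical point, hence the scheme has at most one solution.

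The main obstacle, and the only part requiring real care, is existence: showing the infimum of $\mathcal{G}$ is attained in the open box $(0,1/\rho)^{mn}$ rather than on its boundary. Coercivity is not automatic, because $S$ extends continuously and finitely to $[0,1/\rho]$, so the bounded part of the energy provides no barrier. The barrier instead comes from the singular first derivative $S'(\phi)=(\tfrac1\tau+\tfrac1N)\ln\phi-\rho\ln(1-\rho\phi)$, which tends to $-\infty$ as $\phi\to0^+$ and to $+\infty$ as $\phi\to(1/\rho)^-$ (reinforced by the blow-up of $\kappa$ at $\phi=0$). Concretely, I would extend $\mathcal{G}$ to a lower-semicontinuous function on the compact set $[0,1/\rho]^{mn}\cap\mathcal{M}_c$, assigning $+\infty$ where the $\kappa$-weighted gradient term diverges, so that a minimizer $\phi^\ast$ exists by compactness; then I would exclude $\phi^\ast$ touching the boundary by a variational argument: if some $\phi^\ast_{i,j}=0$, pushing that value upward while compensating at an interior cell to preserve the mean changes $\mathcal{G}$ at leading order like $S'(\phi^\ast_{i,j})\,\delta\to-\infty$, contradicting minimality, and the case $\phi^\ast_{i,j}=1/\rho$ is symmetric using $S'\to+\infty$. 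Hence $\phi^\ast$ is an interior critical point, and by strict convexity it is the unique solution of \eqref{sec3_full_discrete_scheme_1}--\eqref{sec3_full_discrete_scheme_2} for every $s>0$.
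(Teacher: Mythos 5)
Your proposal follows the same core route as the paper's proof: both recast the scheme as the Euler--Lagrange equation of a convex functional --- the paper's $G$ in (\ref{sec3_unique_solvable_pf}) is, up to an additive constant and a shift of the mean, exactly your $\mathcal{G}$, since $\delta_\phi F_e=-H'$ --- both invoke Lemma \ref{sec3_lemma_unique_solvable} for the positive definiteness of the inverse discrete Laplacian, and both extract uniqueness from (strict) convexity. The genuine difference is your last paragraph. The paper stops after showing that $\phi^{k+1}$ solves the scheme if and only if $\delta_\phi G(\phi^{k+1})=0$ and that, by convexity, such a critical point is the unique minimizer of $G$; it never proves that the minimum is attained, so strictly speaking it establishes only that there is \emph{at most} one solution, with existence left implicit. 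You correctly identify attainment as the nontrivial half in this model: $S$ extends boundedly to $[0,1/\rho]$, so the potential supplies no coercivity on the open box, and the barrier must come from the blow-up of $S'$ at the endpoints (reinforced by that of $\kappa$ at $0$). Your compactness-plus-lower-semicontinuity argument and boundary-exclusion step fill exactly this gap, so your proof is more complete than the paper's.

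One detail to tighten: at the face $\phi^\ast_{i,j}=0$ the single-cell push does not quite work as stated, because if a neighbor of $(i,j)$ also vanishes, then after the perturbation that neighbor carries a term $\kappa(0)\cdot(D\phi)^2=+\infty$ and the comparison of energies is vacuous. But this situation cannot occur at a finite-energy configuration: in your l.s.c.\ extension, $\kappa(\phi^\ast_{i,j})=+\infty$ forces all discrete gradients adjacent to a zero cell to vanish, hence all its neighbors to be zero as well, and by connectedness of the periodic grid this propagates to $\phi^\ast\equiv 0$, contradicting the mean $c>0$. So the zero face is excluded by finiteness of the energy alone, and your $S'$-based variational argument is needed only at the face $\phi^\ast_{i,j}=1/\rho$, where $\kappa$ is finite (since $1/\rho<1$) and the push-down/compensate argument goes through exactly as you wrote it.
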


\begin{proof}
From the proof of Lemma \ref{sec3_lemma_unique_solvable}, the operator $L:=-s\Delta_h$ is positive definite,
so it is nonsingular and $L^{-1}$ is also positive definite.
Define a functional on $\mathcal{M}_0$:
\begin{equation}
\label{sec3_unique_solvable_pf}
G(\phi)=\frac{1}{2}\big(L^{-1}(\phi),\phi\big)_h-\big(L^{-1}(\phi),\phi^k\big)_h+F_c(\phi)-\big(\phi,\delta_\phi F_e(\phi^k)\big)_h.
\end{equation}
With a little work, we can obtain its variational derivative
$$\delta_\phi G(\phi)=L^{-1}(\phi-\phi^k)+\delta_\phi F_c(\phi)-\delta_\phi F_e(\phi^k).$$
For any $\psi\in\mathcal{M}_0$, we have
$$\frac{\d^2G(\phi+\lambda\psi)}{\d\lambda^2}\Big|_{\lambda=0}=\big(L^{-1}(\psi),\psi\big)_h+\frac{\d^2F_c(\phi+\lambda\psi)}{\d\lambda^2}\Big|_{\lambda=0}.$$
The convexity of $F_c$ and the positive definiteness of $L^{-1}$ imply the convexity of $G$ on $\mathcal{M}_0$.

Supposing that $\phi^{k+1}$ solves the scheme (\ref{sec3_full_discrete_scheme_1})-(\ref{sec3_full_discrete_scheme_2}), we obtain
$$\phi^{k+1}-\phi^k+L\big(\delta_\phi F_c(\phi^{k+1})-\delta_\phi F_e(\phi^k)\big)=0,$$
namely,$$\delta_\phi G(\phi^{k+1})=L^{-1}(\phi^{k+1}-\phi^k)+\delta_\phi F_c(\phi^{k+1})-\delta_\phi F_e(\phi^k)=0,$$
which implies, $\phi^{k+1}$ solves (\ref{sec3_full_discrete_scheme_1})-(\ref{sec3_full_discrete_scheme_2}) if and only if $\delta_\phi G(\phi^{k+1})=0$.
The convexity of $G$ suggests that $G$ is minimized by $\phi^{k+1}$, which is unique.
\end{proof}

\subsection{Unconditional energy stability}

The following lemma is a discrete form of Lemma \ref{sec3_estimate}.

\begin{lemma}
Assume that $\phi,\psi\in\mathcal{C}_{\overline{m}\times\overline{n}}$ are periodic.
If $F=F_c-F_e$ gives a convex splitting, then
\begin{equation}
\label{sec3_discrete_convex_split_estimate}
F(\phi)-F(\psi)\le(\delta_\phi F_c(\phi)-\delta_\phi F_e(\psi),\phi-\psi)_h.
\end{equation}
\end{lemma}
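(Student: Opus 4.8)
The plan is to mirror the proof of the continuous estimate in Lemma \ref{sec3_estimate}, replacing the $L^2$ inner product by the discrete weighted inner product $(\cdot,\cdot)_h$ and invoking the convexity of $F_c$ and $F_e$ established in Lemma \ref{sec3_discrete_convex_split_exist}. The only structural fact needed is the first-order characterization of convexity on the finite-dimensional space $\mathcal{C}_{\overline{m}\times\overline{n}}$ of periodic grid functions: for a convex functional $\Phi$ with variational derivative $\delta_\phi\Phi$ taken relative to $(\cdot,\cdot)_h$, the graph of $\Phi$ lies above each of its tangent hyperplanes.

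To make this precise I would introduce, for fixed periodic $\phi,\psi$, the scalar function $g(\lambda)=F_c(\psi+\lambda(\phi-\psi))$ on $[0,1]$. Since $F_c$ is convex along the segment joining $\psi$ and $\phi$, $g$ is a convex function of the single variable $\lambda$, with derivative $g'(\lambda)=(\delta_\phi F_c(\psi+\lambda(\phi-\psi)),\phi-\psi)_h$. Applying the tangent-line inequality for one-variable convex functions at the endpoint $\lambda=1$ gives $g(0)\ge g(1)+g'(1)(0-1)$, that is,
$$F_c(\phi)-F_c(\psi)\le(\delta_\phi F_c(\phi),\phi-\psi)_h.$$
Carrying out the same construction with $F_e$ in place of $F_c$ but using the tangent-line inequality at the endpoint $\lambda=0$ yields
$$F_e(\phi)-F_e(\psi)\ge(\delta_\phi F_e(\psi),\phi-\psi)_h.$$
Subtracting the second inequality from the first and using $F=F_c-F_e$ produces exactly \eqref{sec3_discrete_convex_split_estimate}. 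The two inequalities are nothing but the same tangent-line property of scalar convex functions evaluated at the two different endpoints of the segment, which is why $\delta_\phi F_c$ is anchored at $\phi$ while $\delta_\phi F_e$ is anchored at $\psi$.

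The step that genuinely requires care — and the main obstacle — is the identity $g'(\lambda)=(\delta_\phi F_c(\cdot),\phi-\psi)_h$, i.e. confirming that the explicit expression written for $\delta_\phi F_c$ really is the gradient of $F_c$ relative to $(\cdot,\cdot)_h$. Differentiating the gradient-energy part $\kappa(\phi_{i,j})\big(a_x((D_x\phi)^2)+a_y((D_y\phi)^2)\big)$ in the direction of a periodic increment produces terms in which $D_x\psi$ and $D_y\psi$ appear paired against $A_x\kappa(\phi)D_x\phi$ and $A_y\kappa(\phi)D_y\phi$ through the edge inner products $[\cdot,\cdot]_{\mathrm{ew}}$ and $[\cdot,\cdot]_{\mathrm{ns}}$; to convert these into a clean pairing of $\psi$ against a cell-centered function one invokes the summation-by-parts identities of Proposition \ref{sec3_discrete_formula}(1)--(2), which transfer the center-to-edge difference $D_x$ onto $-d_x$ and exactly reproduce the $-2d_x(A_x\kappa(\phi)D_x\phi)$ and $-2d_y(A_y\kappa(\phi)D_y\phi)$ contributions in $\delta_\phi F_c$. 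Periodicity of $\phi$ and $\psi$ is what guarantees that no boundary remainder survives this transfer. Once this bookkeeping is verified, the remainder of the argument is the purely scalar convexity inequality and needs nothing further.
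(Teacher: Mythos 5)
Your proposal is correct and takes essentially the same route as the paper: both restrict $F_c$ and $F_e$ to the segment joining $\psi$ and $\phi$, use convexity of the resulting one-variable function, and apply the tangent-line inequality anchored at $\phi$ for $F_c$ and at $\psi$ for $F_e$ before combining (the paper parametrizes via $s\mapsto\phi+s\psi$ and then substitutes $\psi\to\psi-\phi$, which is the same device). Your extra care in checking that the stated expression for $\delta_\phi F_c$ is the gradient of $F_c$ with respect to $(\cdot,\cdot)_h$ via the summation-by-parts identities of Proposition \ref{sec3_discrete_formula} is a point the paper leaves implicit, but it does not change the argument.
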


\begin{proof}
Define a continuously differentiable function in $\R$ as $j_c(s)=F_c(\phi+s\psi)$.
With the convexity of $F_c$, we get the convexity of $j_c$ in $\R$, so $j_c(1)-j_c(0)\ge j_c'(0)(1-0)$,
that is, $F_c(\phi+\psi)-F_c(\phi)\ge(\delta_\phi F_c(\phi),\psi)_h$.
Replacing $\psi$ by $\psi-\phi$ leads to $$F_c(\psi)-F_c(\phi)\ge(\delta_\phi F_c(\phi),\psi-\phi)_h.$$
Repeating the deduction above on $F_e$ and exchanging $\phi$ and $\psi$ lead to $$F_e(\phi)-F_e(\psi)\ge(\delta_\phi F_e(\psi),\phi-\psi)_h.$$
Adding the two inequalities and multiplying by $-1$ give the result.
\end{proof}

\begin{theorem}[Energy stability]
\label{sec3_energy_stable}
The scheme {\rm(\ref{sec3_full_discrete_scheme_1})-(\ref{sec3_full_discrete_scheme_2})} is unconditionally energy stable,
meaning that for any time step $s>0$, we always have $$F(\phi^{k+1})\le F(\phi^k).$$
\end{theorem}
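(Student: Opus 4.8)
The plan is to mirror exactly the argument of Theorem \ref{sec3_semidiscrete_stability} from the continuous-space setting, replacing the continuous estimate by its discrete counterpart \eqref{sec3_discrete_convex_split_estimate} and the continuous integration by parts by the discrete summation-by-parts formula of Proposition \ref{sec3_discrete_formula}~(3). First I would invoke the preceding lemma with the choice $\phi=\phi^{k+1}$ and $\psi=\phi^k$, both of which are periodic, to obtain
$$F(\phi^{k+1})-F(\phi^k)\le\big(\delta_\phi F_c(\phi^{k+1})-\delta_\phi F_e(\phi^k),\,\phi^{k+1}-\phi^k\big)_h.$$

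The second step is to recognize the right-hand side through the discrete chemical potential: by \eqref{sec3_full_discrete_scheme_2} the first argument of the inner product is precisely $\mu^{k+1}$, while by \eqref{sec3_full_discrete_scheme_1} the increment satisfies $\phi^{k+1}-\phi^k=s\,\Delta_h\mu^{k+1}$. Substituting both gives
$$F(\phi^{k+1})-F(\phi^k)\le s\,(\mu^{k+1},\Delta_h\mu^{k+1})_h.$$
Third, I would apply Proposition \ref{sec3_discrete_formula}~(3) to the factor $(\mu^{k+1},\Delta_h\mu^{k+1})_h$, yielding
$$(\mu^{k+1},\Delta_h\mu^{k+1})_h=-[D_x\mu^{k+1},D_x\mu^{k+1}]_{\mathrm{ew}}-[D_y\mu^{k+1},D_y\mu^{k+1}]_{\mathrm{ns}}.$$
Each edge inner product of a function with itself is nonnegative, since $[f,f]_{\mathrm{ew}}=h_xh_y\sum_{i,j}a_x(f^2)_{i,j}\ge0$ and likewise for the north-south term—exactly the positivity already exploited in the proof of Lemma \ref{sec3_lemma_unique_solvable}. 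Hence the right-hand side is $\le0$, and chaining the three displays gives $F(\phi^{k+1})\le F(\phi^k)$ for every $s>0$, unconditionally.

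I do not expect a substantive obstacle here: once the discrete estimate \eqref{sec3_discrete_convex_split_estimate} and the discrete summation by parts are in hand, the computation is purely formal and carries no restriction on $s$. The only points requiring care are bookkeeping. I must confirm that $\mu^{k+1}$ is well defined and periodic so that Proposition \ref{sec3_discrete_formula}~(3) legitimately applies, which holds because $\phi^{k+1}$ is periodic and the center-to-edge and edge-to-center operators preserve the relevant periodicity. More delicately, the estimate \eqref{sec3_discrete_convex_split_estimate} rests on the convexity of $F_c$ from Lemma \ref{sec3_discrete_convex_split_exist}, which presupposes that the iterate values lie in $(0,1/\rho)$, the domain on which $S$ and $\kappa$ are defined; I would note that the minimizer furnished by Theorem \ref{sec3_unique_solvable} is interior, since $F_c\to+\infty$ as any nodal value approaches $0$ or $1/\rho$ through the logarithmic and $\kappa$ singularities, so that the admissible $\phi^{k+1}$ at which the estimate is evaluated indeed exists. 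With these checks the argument is complete.
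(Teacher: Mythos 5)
Your proposal is correct and follows essentially the same argument as the paper's proof: apply the discrete convex-splitting estimate \eqref{sec3_discrete_convex_split_estimate} with $\phi=\phi^{k+1}$, $\psi=\phi^k$, substitute the scheme \eqref{sec3_full_discrete_scheme_1}--\eqref{sec3_full_discrete_scheme_2} to identify the right-hand side as $s(\mu^{k+1},\Delta_h\mu^{k+1})_h$, and conclude nonpositivity via Proposition \ref{sec3_discrete_formula}~(3). Your additional remarks on periodicity of $\mu^{k+1}$ and on the iterates remaining in $(0,1/\rho)$ are reasonable bookkeeping that the paper leaves implicit, but they do not change the substance of the argument.
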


\begin{proof}
By choosing $\phi=\phi^{k+1}$ and $\psi=\phi^k$ in (\ref{sec3_discrete_convex_split_estimate}), we obtain
\begin{align*}
    F(\phi^{k+1})-F(\phi^k) & \le(\delta_\phi F_c(\phi^{k+1})-\delta_\phi F_e(\phi^k),\phi^{k+1}-\phi^k)_h\\
    & =s(\mu^{k+1},\Delta_h\mu^{k+1})_h=-s\big([D_x\mu^{k+1},D_x\mu^{k+1}]_\mathrm{ew}+[D_y\mu^{k+1},D_y\mu^{k+1}]_\mathrm{ns}\big)\le 0,
\end{align*}
where the last equality is given by Proposition \ref{sec3_discrete_formula}.
\end{proof}

\subsection{The difference scheme in the stochastic case}\label{sec3_stochastoc}

Before giving the difference scheme, we recall the discretization of the stochastic term
\begin{equation}
\label{stochastic_term}
\xi(\br,t)=-\sqrt{2}\,\nabla\cdot\bet.
\end{equation}
See \cite{LiXiao2014} in detail.

In the theory of the stochastic process (see, e.g., \cite{LiuCihua1980}), the space-time Gaussian white noise can be represented as $\eta=\d W/\d t$,
where $W(t)$ is the standard Wiener process on $L^2(\Omega)$.
Separating the variables of the Wiener process $W$, we obtain
$$W(t,x,y)=\sum_{p,q}\beta_{pq}(t)e_{pq}(x,y),\quad(x,y)\in\Omega,~t\ge 0,$$
where $\{e_{pq}\}$ is a set of normal orthogonal basis on $L^2(\Omega)$,
$\beta_{pq}(t)=(W(t),e_{pq})_{L^2(\Omega)}$, and $\{\beta_{pq}(t)\}$ is a sequence of independent Wiener process, thus
$$\frac{\beta_{pq}(t_{k+1})-\beta_{pq}(t_k)}{\sqrt{s}}\sim N(0,1).$$
Using the mid-rectangle quadrature formula, we approximate $\eta_{ij}^{k+\frac{1}{2}}$ as:
\begin{align*}
\eta_{ij}^{k+\frac{1}{2}}=\Big(\frac{\d W}{\d t}\Big)_{ij}^{k+\frac{1}{2}}
& \approx\frac{1}{h_xh_ys}
\int_{(i-\frac{1}{2})h_x}^{(i+\frac{1}{2})h_x}\int_{(j-\frac{1}{2})h_y}^{(j+\frac{1}{2})h_y}\int_{t_k}^{t_{k+1}}\frac{\d W}{\d t}\,\d x\d y\d t\\
& =\frac{1}{h_xh_ys}\sum_{p,q}(\beta_{pq}(t_{k+1})-\beta_{pq}(t_k))
\int_{(i-\frac{1}{2})h_x}^{(i+\frac{1}{2})h_x}\int_{(j-\frac{1}{2})h_y}^{(j+\frac{1}{2})h_y}e_{pq}\,\d x\d y.
\end{align*}
For $i=1,2,\cdots,m$ and $j=1,2,\cdots,n$, we choose
$$e_{ij}=\frac{1}{\sqrt{h_xh_y}}\bm{1}_{[(i-\frac{1}{2})h_x,(i+\frac{1}{2})h_x)\times[(j-\frac{1}{2})h_y,(j+\frac{1}{2})h_y)}.$$
For $p\not=i$ or $q\not=j$, by orthogonalizing the basis functions, we obtain
$$\int_{(i-\frac{1}{2})h_x}^{(i+\frac{1}{2})h_x}\int_{(j-\frac{1}{2})h_y}^{(j+\frac{1}{2})h_y}e_{pq}\,\d x\d y=0.$$
Then we have
$$\eta_{ij}^{k+\frac{1}{2}}\approx\frac{1}{h_xh_ys}(\beta_{ij}(t_{k+1})-\beta_{ij}(t_k))\sqrt{h_xh_y}
=\frac{1}{\sqrt{h_xh_ys}}\frac{\beta_{ij}(t_{k+1})-\beta_{ij}(t_k)}{\sqrt{s}}
=\frac{1}{\sqrt{h_xh_ys}}r_{ij}^{k},$$
where $\{r_{ij}^{k}\}$ is a sequence of standard normal random variables.
Therefore, the discretized form of the stochastic term (\ref{stochastic_term}) is
\begin{equation}
\label{discrete_stochastic_term}
\xi_{ij}^{k+\frac{1}{2}}=-\frac{\sqrt{2}}{\sqrt{h_xh_ys}}\Big(a_xD_x(r_1)_{ij}^{k}+a_yD_y(r_2)_{ij}^{k}\Big),
\end{equation}
where $(r_1)_{ij}^k$ and $(r_2)_{ij}^k$ are standard normal random variables.

Now we describe the difference scheme for the MMC-TDGL equation with $\varepsilon>0$ as follows:
given $\phi^k\in\mathcal{C}_{\overline{m}\times\overline{n}}$ periodic, find $\phi^{k+1}\in\mathcal{C}_{\overline{m}\times\overline{n}}$ periodic such that
\begin{equation}
\label{sec3_full_discrete_scheme_1_stoc}
\phi^{k+1}-\phi^k=s\Delta_h\mu^{k+1}+s\varepsilon\xi^{k+\frac{1}{2}},
\end{equation}
where $\mu^{k+1}$ is still expressed as (\ref{sec3_full_discrete_scheme_2}),
$\xi^{k+\frac{1}{2}}=\{\xi_{ij}^{k+\frac{1}{2}}:1\le i\le m,\ 1\le j\le n\}$ with components given by (\ref{discrete_stochastic_term}).
Since the stochastic term $\xi^{k+\frac{1}{2}}$ does not depend on the unknown $\phi^{k+1}$,
the unique solvability of (\ref{sec3_full_discrete_scheme_1_stoc}) is the direct corollary of Theorem \ref{sec3_unique_solvable}.

\begin{corollary}[Unique solvability]
\label{sec3_unique_solvable_stoc}
The difference scheme {\rm(\ref{sec3_full_discrete_scheme_1_stoc}),(\ref{sec3_full_discrete_scheme_2})} is uniquely solvable
for any time step $s>0$.
\end{corollary}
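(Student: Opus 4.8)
The plan is to exploit the fact, already noted just before the statement, that the discretized noise $\xi^{k+\frac{1}{2}}$ is prescribed data, entirely independent of the unknown $\phi^{k+1}$. It therefore enters (\ref{sec3_full_discrete_scheme_1_stoc}) only as a fixed shift of the right-hand side, and the entire variational argument of Theorem \ref{sec3_unique_solvable} should replay after absorbing this shift into the linear part of the energy functional $G$.

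First I would record the one genuinely new ingredient: the discrete noise has zero discrete mean, i.e. $\xi^{k+\frac{1}{2}}\in\mathcal{M}_0$. This follows from the divergence structure (\ref{discrete_stochastic_term}), since
$$a_xD_x(r_1)_{i,j}=\tfrac{1}{2h_x}\big((r_1)_{i+1,j}-(r_1)_{i-1,j}\big)$$
telescopes to zero when summed over the periodic index $i$, and likewise for the $a_yD_y(r_2)$ term. Consequently (\ref{sec3_full_discrete_scheme_1_stoc}) conserves the discrete mean of $\phi$, so one may again reduce to the zero-mean space $\mathcal{M}_0$ exactly as in the deterministic case (replacing $\phi^k$ by $\phi^k-c$ when its mean is $c$), and $L^{-1}\xi^{k+\frac{1}{2}}$ is well defined by Lemma \ref{sec3_lemma_unique_solvable} because $L=-s\Delta_h$ is invertible precisely on $\mathcal{M}_0$.

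With this in hand I would rewrite the scheme as a gradient equation. Using $L=-s\Delta_h$, equations (\ref{sec3_full_discrete_scheme_1_stoc}),(\ref{sec3_full_discrete_scheme_2}) are equivalent to
$$L^{-1}\big(\phi^{k+1}-\phi^k-s\varepsilon\xi^{k+\frac{1}{2}}\big)+\delta_\phi F_c(\phi^{k+1})-\delta_\phi F_e(\phi^k)=0,$$
which is exactly $\delta_\phi\tilde{G}(\phi^{k+1})=0$ for the functional
$$\tilde{G}(\phi)=\frac{1}{2}\big(L^{-1}(\phi),\phi\big)_h-\big(L^{-1}(\phi),\phi^k\big)_h+F_c(\phi)-\big(\phi,\delta_\phi F_e(\phi^k)\big)_h-s\varepsilon\big(L^{-1}(\phi),\xi^{k+\frac{1}{2}}\big)_h.$$
This $\tilde{G}$ differs from the functional $G$ in (\ref{sec3_unique_solvable_pf}) only by the affine term $-s\varepsilon(L^{-1}(\phi),\xi^{k+\frac{1}{2}})_h$, whose second variation vanishes. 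Hence, just as in Theorem \ref{sec3_unique_solvable},
$$\frac{\d^2\tilde{G}(\phi+\lambda\psi)}{\d\lambda^2}\Big|_{\lambda=0}=\big(L^{-1}(\psi),\psi\big)_h+\frac{\d^2F_c(\phi+\lambda\psi)}{\d\lambda^2}\Big|_{\lambda=0}\ge 0$$
for every $\psi\in\mathcal{M}_0$, with strict positivity from the positive definiteness of $L^{-1}$. Thus $\tilde{G}$ is strictly convex on $\mathcal{M}_0$, so it admits a unique minimizer, which is the unique solution of $\delta_\phi\tilde{G}=0$ and therefore of the scheme.

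The only real obstacle is the zero-mean check for $\xi^{k+\frac{1}{2}}$: without it the application of $L^{-1}$ and the reduction to $\mathcal{M}_0$ would both fail. Once that short summation-by-parts computation is settled, the corollary is an immediate replay of Theorem \ref{sec3_unique_solvable}, since a deterministic affine shift of the data cannot destroy the strict convexity that produced uniqueness there.
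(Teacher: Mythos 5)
Your proposal is correct and follows essentially the same route as the paper: the paper's entire proof is the observation that $\xi^{k+\frac{1}{2}}$ is fixed data independent of the unknown $\phi^{k+1}$, so that unique solvability follows as a direct corollary of Theorem~\ref{sec3_unique_solvable}. Your write-up simply makes that reduction explicit --- adding the linear term $-s\varepsilon\big(L^{-1}(\phi),\xi^{k+\frac{1}{2}}\big)_h$ to the functional $G$ of (\ref{sec3_unique_solvable_pf}) and verifying the zero discrete mean of the noise so the reduction to $\mathcal{M}_0$ and the use of $L^{-1}$ remain legitimate, details the paper leaves implicit.
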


\section{Numerical methods for solving the difference scheme}

In the last section, we present an unconditionally energy stable difference scheme
(\ref{sec3_full_discrete_scheme_1})-(\ref{sec3_full_discrete_scheme_2}) for the non-stochastic MMC-TDGL equation.
It is obvious, as mentioned above, that the scheme is nonlinear with respect to the unknown $\phi^{k+1}$, and we have to solve it iteratively.
To start the iteration, we need to choose an appropriate initial value,
which may affect the convergence of the iteration, as well as, the limit of a convergent iteration.
Fortunately, the unique solvability (Theorem \ref{sec3_unique_solvable} or Corollary \ref{sec3_unique_solvable_stoc}) guarantees
that the limit is rightly the solution as long as the iteration converges whatever the initial value is.
Now the problem is to generate a convergent iteration.
The proof of Theorem \ref{sec3_unique_solvable} indicates that $\phi^{k+1}$ is the unique minimum of a convex functional $G$,
which motivates us to convert the problem that to solve the schemes
(\ref{sec3_full_discrete_scheme_1})-(\ref{sec3_full_discrete_scheme_2}) or
(\ref{sec3_full_discrete_scheme_1_stoc}),(\ref{sec3_full_discrete_scheme_2})
to a variation problem that to find the minimum of the objective functional $G$ using optimization methods.
Here, we adopt the standard Newton method to search the minimum and the GMRES method \cite{GMRES1986} to solve the Newton equation.
The optimization theories ensure the convergence of the Newton iterative procedure (see, e.g., \cite{NumericalOpt1999}).

\subsection{Newton-GMRES method}

For convenience, we view the functional $G$ as an $mn$-variable function.
The Newton-GMRES procedure for finding $\phi^{k+1}$ with given $\phi^{k}$ is as follows:
\begin{enumerate}
  \item Let $x^{(0)}=\phi^k$ and $x^{(l)}$ is the $l$-th iteration of $\phi^{k+1}$;
  \item Solving the Newton equation $\nabla^2G(x^{(l)})p^{(l)}=-\nabla G(x^{(l)})$ by GMRES method;
  \item Let $x^{(l+1)}=x^{(l)}+p^{(l)}$;
  \item If $\|p^{(l)}\|<\mathrm{tol}$, let $\phi^{k+1}=x^{(l+1)}$; otherwise, let $l=l+1$ and turn to step 2.
\end{enumerate}
According to the expression of $G$ and the positive definiteness of $L=-s\Delta_h$,
the Newton equation $\nabla^2G(\phi)\psi=-\nabla G(\phi)$ can be rewritten as
\begin{equation}
\label{sec4_newton_eq3}
\psi-s\Delta_h\big(H(\phi)\psi\big)
=-(\phi-\phi^k+s\varepsilon\xi^{k+\frac{1}{2}})+s\Delta_h\big(\delta_\phi F_c(\phi)-\delta_\phi F_e(\phi^k)\big),
\end{equation}
where $H(\phi)=\frac{1}{h_xh_y}\nabla^2F_c(\phi)$ and
\begin{align*}
H(\phi)\psi & =S''(\phi)\psi+\kappa''(\phi)\psi\big(a_x((D_x\phi)^2)+a_y((D_y\phi)^2)\big)+2\kappa'(\phi)\big(a_x(D_x\phi D_x\psi)+a_y(D_y\phi D_y\psi)\big)\\
& \qquad\qquad -2d_x\big(A_x(\kappa'(\phi)\psi)D_x\phi+A_x\kappa(\phi)D_x\psi\big)-2d_y\big(A_y(\kappa'(\phi)\psi)D_y\phi+A_y\kappa(\phi)D_y\psi\big).
\end{align*}
The sparse linear equations (\ref{sec4_newton_eq3}) with respect to the Newton step $\psi$ will be solved by the GMRES method.

\subsection{Adaptive time stepping technique}

For the problems with the property of long time energy decreasing, adaptive time stepping approaches are usually utilized to save the CPU time.
The basic idea for this technique is that a small time step will be used when the energy decays sharply and a large time step will be used otherwise.
One efficient way to adjust the time steps is given by \cite{QiaoZhonghua2011}:
\begin{equation}
\label{sec4_adapted_time}
s_{k+1}=\max\Big\{s_{\mathrm{min}},\frac{s_{\mathrm{max}}}{\sqrt{1+\alpha|U'(t_k)|^2}}\Big\},\quad\alpha=\mathrm{const.},
\end{equation}
where $U(t)=U(\phi(\cdot,t))$ is the energy defined by (\ref{sec2_energyfunctional}).
The smallest and largest time steps $s_{\mathrm{min}}$ and $s_{\mathrm{max}}$ give the lower and upper bound of the adaptive time steps, respectively,
namely, $s_{\mathrm{min}}\le s_{k+1}\le s_{\mathrm{max}}$.
The large $|U'(t_k)|$ leads to the small time step $s_{k+1}$, which corresponds to the sharp decay of the energy.
The constant $\alpha$ is often evaluated by $10^5$ or higher magnitude to control $s_{k+1}$ in a narrow range
so as to capture the moment when the energy decays sharply.
We use the non-stochastic case to illustrate the adaptive time stepping method.
The derivative of the energy can be calculate by numerically integrating the following:
\begin{equation}
\label{sec4_Uderivative}
U'(t_k)=\Big(\int_\Omega\bianfen{U}{\phi}\piandao{\phi}{t}\,\d\br\Big)\Big|_{t=t_k}
=-\Big(\int_\Omega\Big|\nabla\bianfen{U}{\phi}\Big|^2\,\d\br\Big)\Big|_{t=t_k}.
\end{equation}

Not only has MMC-TDGL equation the property of long time energy decreasing in the non-stochastic case,
but there are also some intervals when the energy decays gently.
In such intervals, we hope to use a little bigger time steps.
Even though $|U'(t)|$ is not very large now, it may be magnified by $\alpha$ so that $s_{k+1}$ is too small to accelerate the computation.
So we consider a variant of (\ref{sec4_adapted_time}), and especially, we treat $\alpha$ as a function with respect to $t_k$.
Our idea for the choice of $\alpha$ is as follows:
$\alpha$ is kept on a lower magnitude when $|U'|$ changes from a big value to a small one;
during this period, $|U'|$ is dominant for adjusting the time step.
When $|U'|$ starts to increase, which means the energy will decay more and more sharply, we raise $\alpha$ rapidly to a higher magnitude,
and keep $\alpha|U'|^2$ above a certain  magnitude; during this period, $\alpha$ is dominant.
Whether $|U'|$ increases or decreases is reflected via the sign of the second-order derivative $U''$.
More precisely, we define $\alpha_k$ as the following form:
\begin{equation}
    \label{sec4_adapted_time_alpha}
    \alpha_k=
    \begin{dcases}
        \alpha_{\mathrm{min}}, & U''(t_k)\ge 0,\\
        \alpha_{\mathrm{min}}-AU''(t_k), & U''(t_k)<0,
    \end{dcases}
\end{equation}
where $\alpha_{\mathrm{min}}$ is a lower bound of $\alpha_k$ and $A$ is a positive constant used to magnify the effect of $U''(t_k)$.
Here the second-order derivative $U''(t_k)$ is approximated by the backward difference
$$U''(t_k)\approx\frac{U'(t_k)-U'(t_{k-1})}{s_{k}},$$
and $U'(t_k)$ is calculated by (\ref{sec4_Uderivative}).

\section{Numerical experiments}

Our experiments are divided into two parts.
First, we use the difference scheme (\ref{sec3_full_discrete_scheme_1})-(\ref{sec3_full_discrete_scheme_2})
with the adaptive time stepping technique (\ref{sec4_adapted_time}),(\ref{sec4_adapted_time_alpha})
to solve the MMC-TDGL equation (\ref{sec1_MMC_TDGL}) in the non-stochastic case.
The results demonstrate the unconditional energy stability of the difference scheme and the advantage of the adaptive time stepping method.
Second, we use the difference scheme (\ref{sec3_full_discrete_scheme_1_stoc}),(\ref{sec3_full_discrete_scheme_2}) with the uniform time step
to simulate the MMC-TDGL equation (\ref{sec1_MMC_TDGL}) with various noise strengths.
We observe the effect of the stochastic term and compare the results here with those presented in \cite{LiXiao2014}.

We consider the domain $\Omega=(0,50)\times(0,50)$ and set $\chi=2.37$, $M=0.16$, $N=4.34$ in the model.
The tolerance of the Newton iteration method is set to be $10^{-9}$;
the tolerance and the restart times of the GMRES method for solving the Newton equation are $10^{-8}$ and $40$, respectively.

\subsection{The non-stochastic case}

The difference scheme (\ref{sec3_full_discrete_scheme_1})-(\ref{sec3_full_discrete_scheme_2}) is used to solve the MMC-TDGL equation with $\varepsilon=0$.
The initial condition is set to be $\phi(\br,0)=0.6+\zeta(\br)$,
where $\zeta(\br)$ is a random disturbance whose values range from $-0.15$ to $0.15$ on each mesh point $(x_i,y_j)$ of the $m\times n$ grid.
We first carry out a group of experiments to see how the numerical solutions depend on the spatial scales
so that we can choose an appropriate grid for the further simulations with fewer calculations and without losing the accuracy.

We fix the time step $s=0.001$ and use $m=n=50$, $100$, $200$ and $400$ to solve the equation, respectively.
The random disturbance $\zeta(\br)$ is evaluated on the $400\times400$ grid,
and then restricted on the corresponding nodes in the other coarse grids.
The numerical solutions at $t=5$ with $y=25$ fixed are plotted in Figure \ref{sec5_fig_grid}.
It is found from (a) that the curve with $m=n=200$ has obvious difference from those with $m=n=100$ and $50$,
while from (b), that it has almost no difference from those with $m=n=400$ except for tiny bias near $x=2$ and $x=23$.
By the comparison, we see the convergence as the spatial scale turns smaller,
and we are convinced that the numerical solutions on the $200\times200$ grid are reasonable.
The rest experiments will be carried out on the $200\times200$ grid.
Both the constant and adaptive time steps are adopted to obtain the numerical solutions.

\begin{figure}[!htp]
\centering
\subfigure[$m=n=50$, $100$ and $200$.]{\includegraphics[scale=0.45]{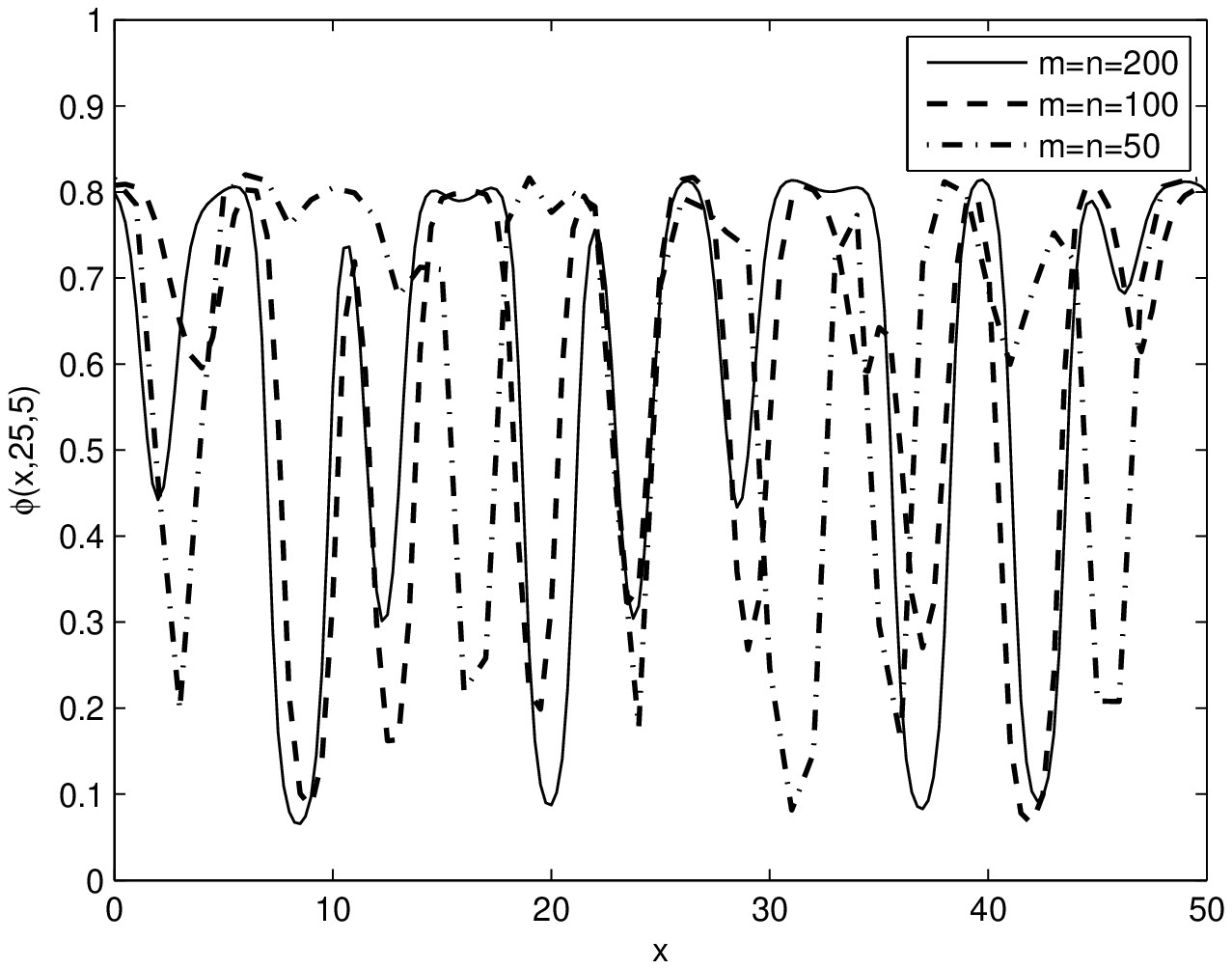}}
\subfigure[$m=n=200$ and $400$.]{\includegraphics[scale=0.45]{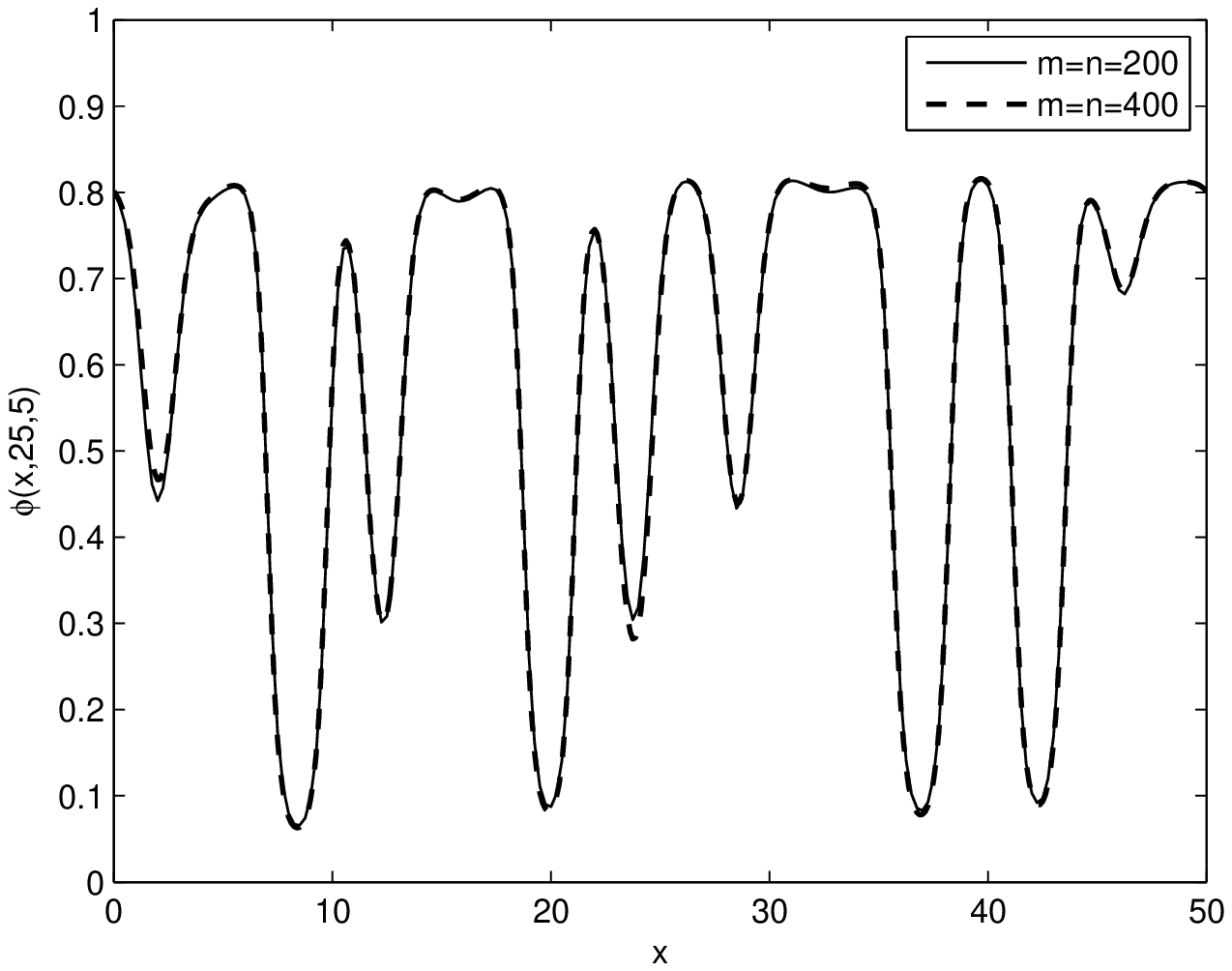}}
\caption{Numerical results obtained by the scheme
{\rm(\ref{sec3_full_discrete_scheme_1})-(\ref{sec3_full_discrete_scheme_2})} with various spatial scales.}
\label{sec5_fig_grid}
\end{figure}

We first use different constant time steps $s=0.0001$, $s=0.001$, $s=0.01$ and $s=0.1$ to solve the equation.
Figure \ref{sec5_fig_constant} shows the numerical results.
The numerical solutions at $t=20$ and $t=3$ are plotted in Figure \ref{sec5_fig_constant}(a) and (b), respectively.
It is found that the solutions with $s\le 0.001$ have few biases, while the solutions with $s\ge 0.01$ include large numerical errors.
The energy evolution is shown in Figure \ref{sec5_fig_constant}(c).
It is obvious that the curve corresponding to $s=0.1$ goes far from the others from about $t=2$.
Besides, we can see that other three curves match well after $t=6$,
but the simulations with larger time steps fail to capture the rapid transition of the phase (see Figure \ref{sec5_fig_constant}(d)).
Therefore, the time step $s=0.001$ is suitable for the simulation of the MMC-TDGL equation with $\varepsilon=0$.

\begin{figure}[h]
\centering
\subfigure[Solution at $t=20$ with $y=25$ fixed.]{\includegraphics[scale=0.45]{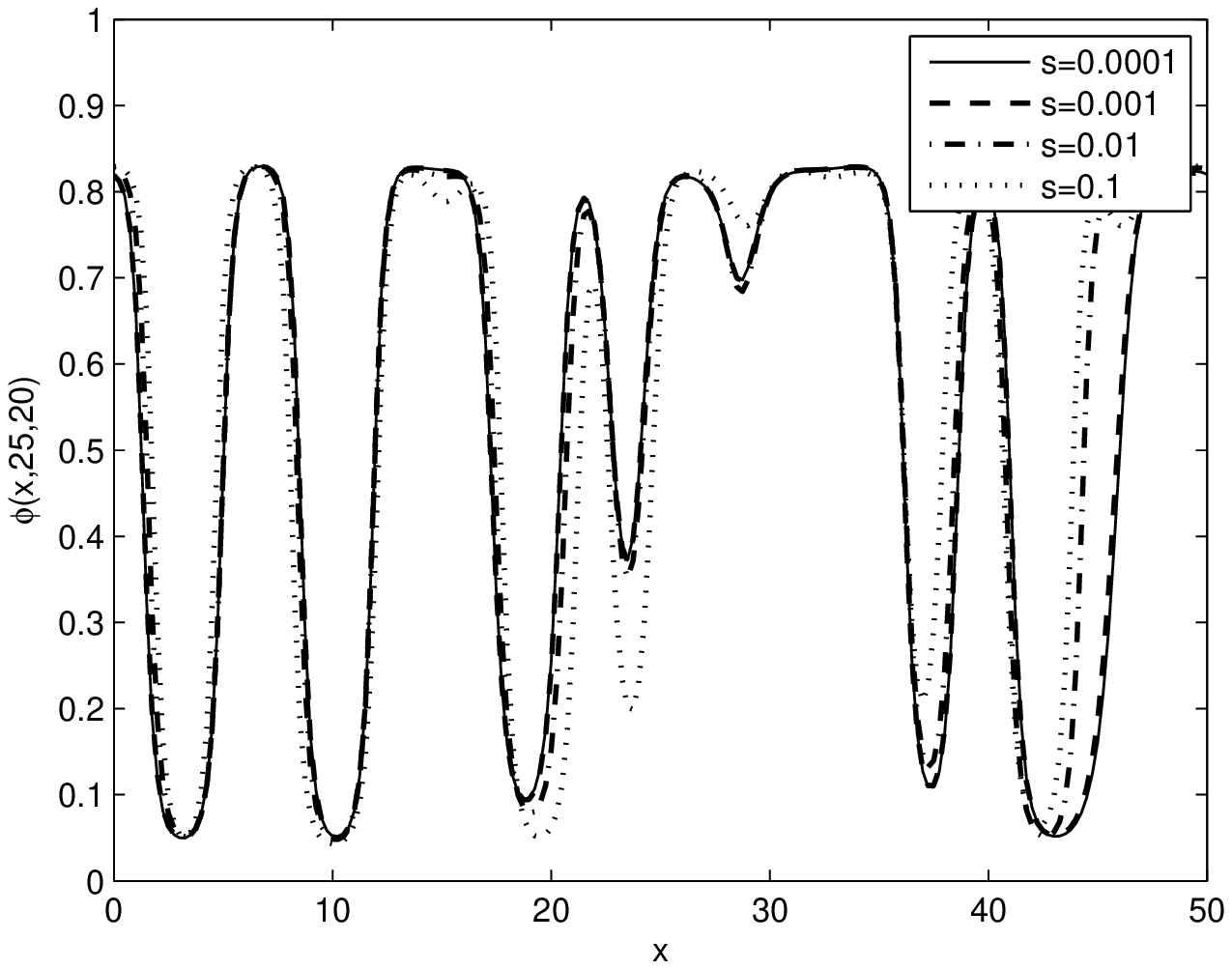}}
\subfigure[Solution at $t=3$ with $y=25$ fixed.]{\includegraphics[scale=0.45]{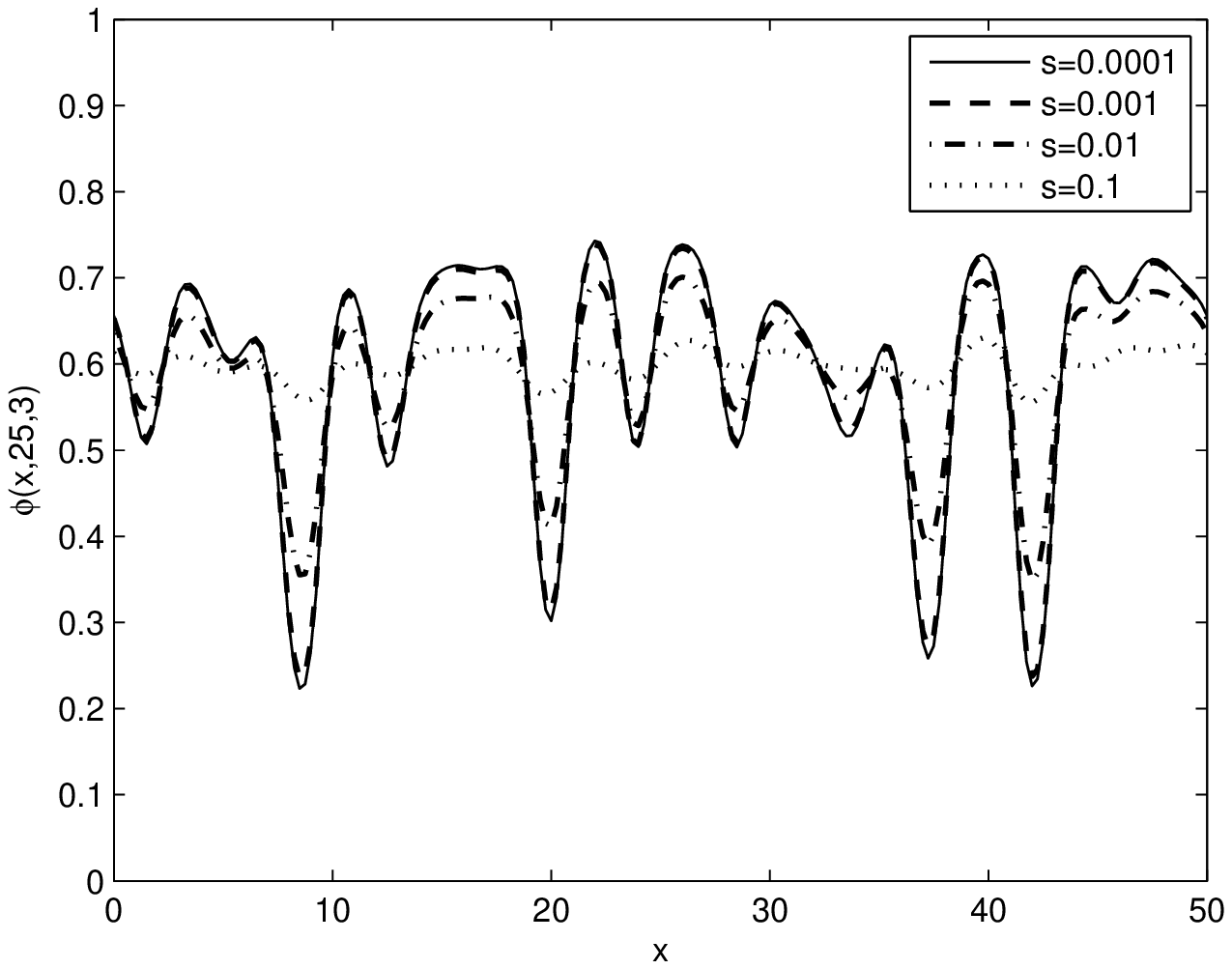}}
\subfigure[Energy evolution: $0\le t\le 20$.]{\includegraphics[scale=0.45]{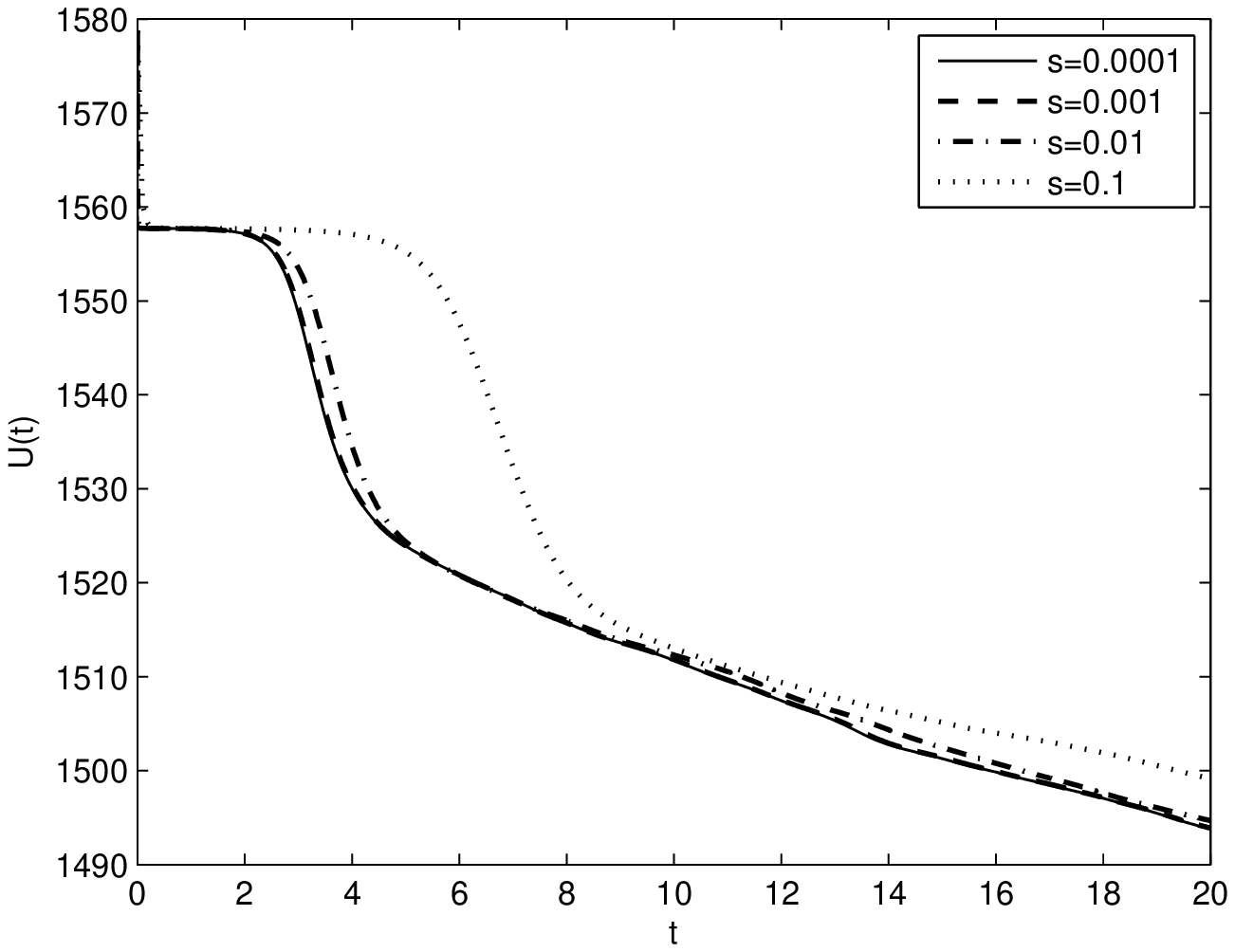}}
\subfigure[Energy evolution: $1.8\le t\le 5.5$.]{\includegraphics[scale=0.45]{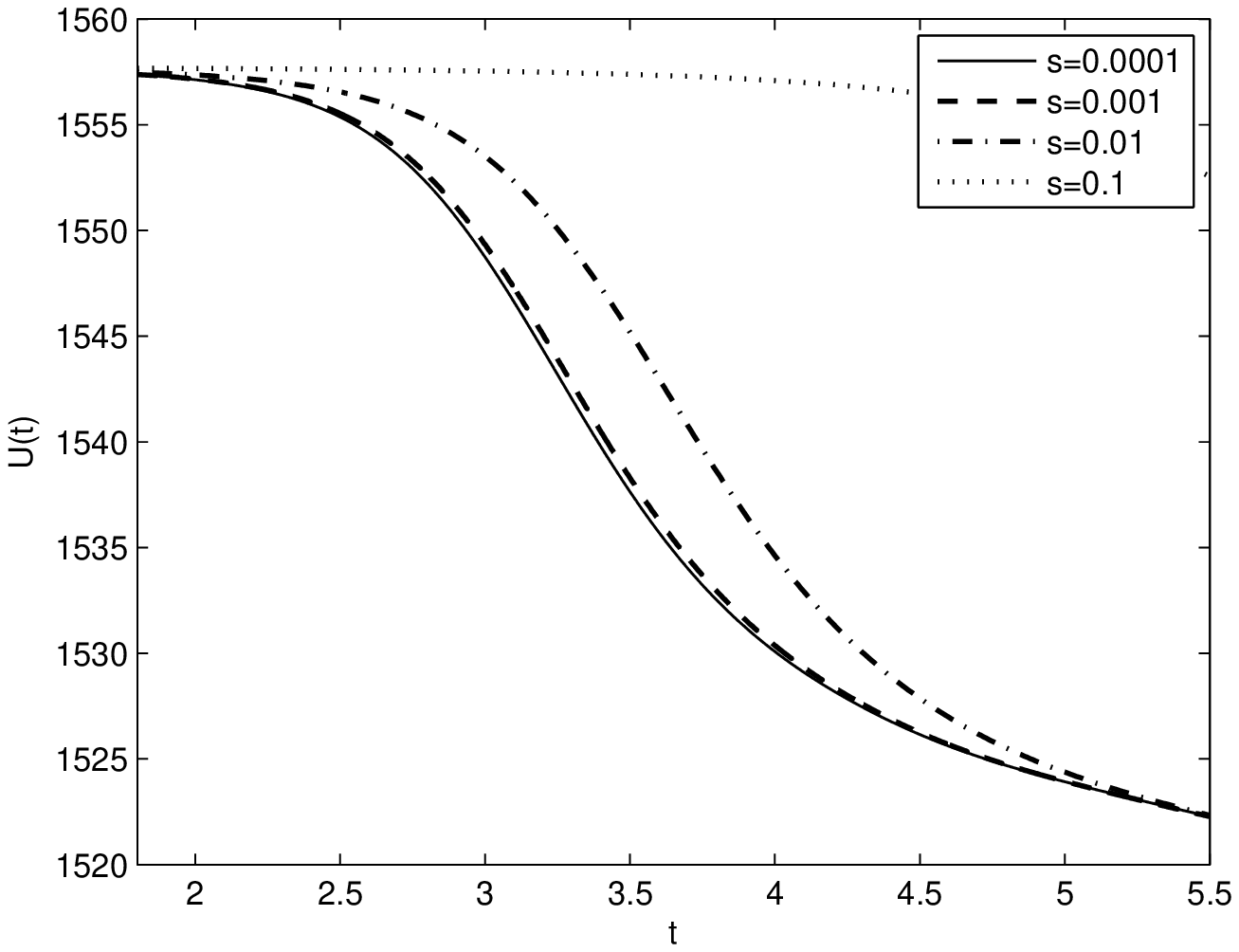}}
\caption{Numerical results obtained by the scheme
{\rm(\ref{sec3_full_discrete_scheme_1})-(\ref{sec3_full_discrete_scheme_2})} with the constant time steps.}
\label{sec5_fig_constant}
\end{figure}

For the adaptive time stepping technique, in order to enhance the efficiency as far as possible, we divide the evolution into two periods.
In the first period, from the beginning to the occurrence of the sharp decay of the energy,
the adaptive time step is calculated by (\ref{sec4_adapted_time}) with $\alpha$ replaced by $\alpha_k$ defined via (\ref{sec4_adapted_time_alpha}),
where we set $s_{\min}=0.001$, $s_{\max}=0.1$, $\alpha_{\mathrm{min}}=10^5$ and $A=10^6$.
We choose $\alpha_{\min}$ and $A$ large enough to capture the moment of the rapid decay.
In the second period, after the sharp decay of the energy, which we mark by $|U'|<3$,
we just set $\alpha_k\equiv 100$, with $s_{\min}$ and $s_{\max}$ holding, to loosen the restriction for the time steps to accelerate the computation.
Here we set the maximum time to be $T=20$
and compare the results obtained by the adaptive time steps with those obtained by the constant time step $s=0.001$.

\begin{figure}[h]
\centering
\subfigure[Solution at $t=20$ with $y=25$ fixed.]{\includegraphics[scale=0.45]{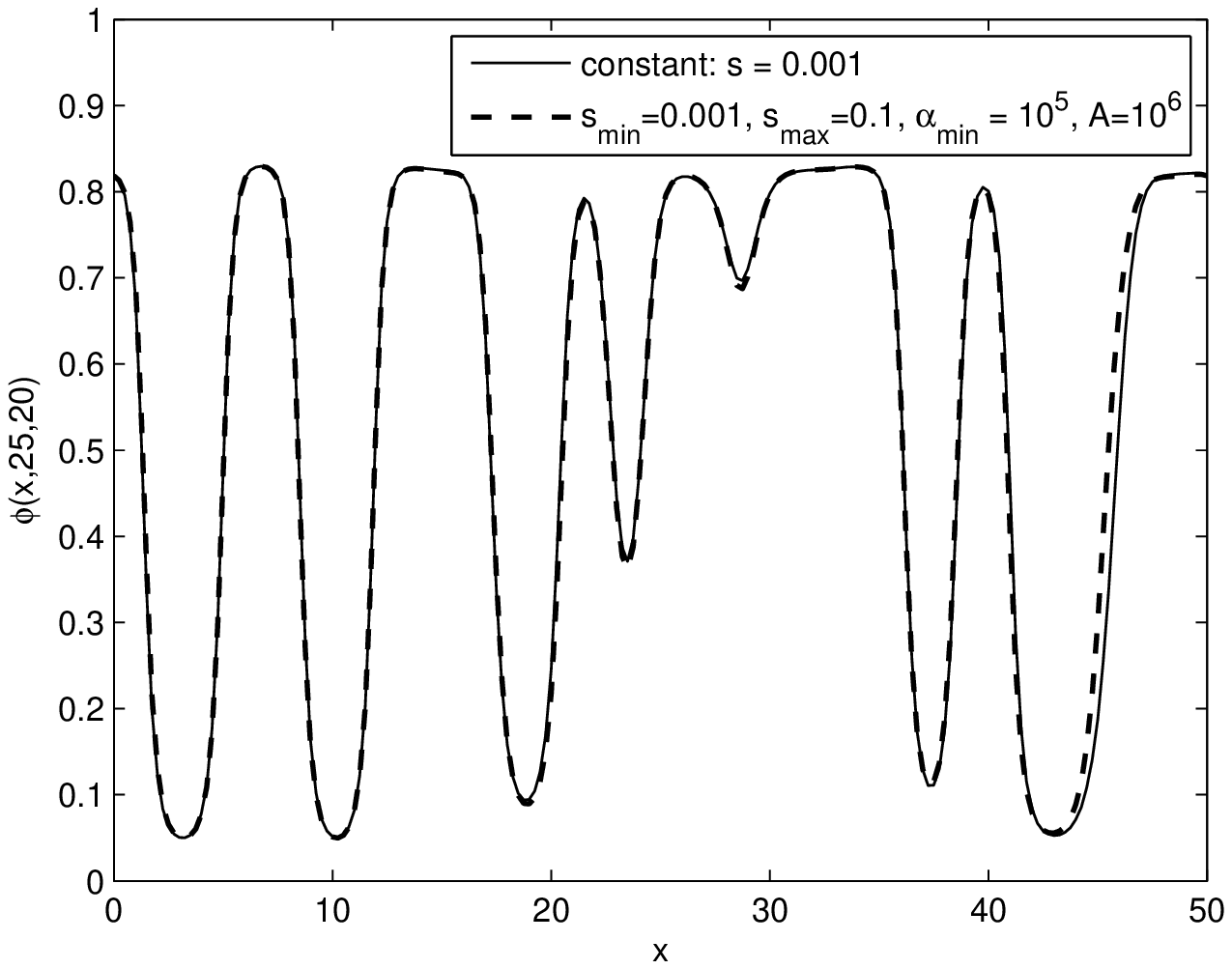}}
\subfigure[Solution at $t=3$ with $y=25$ fixed.]{\includegraphics[scale=0.45]{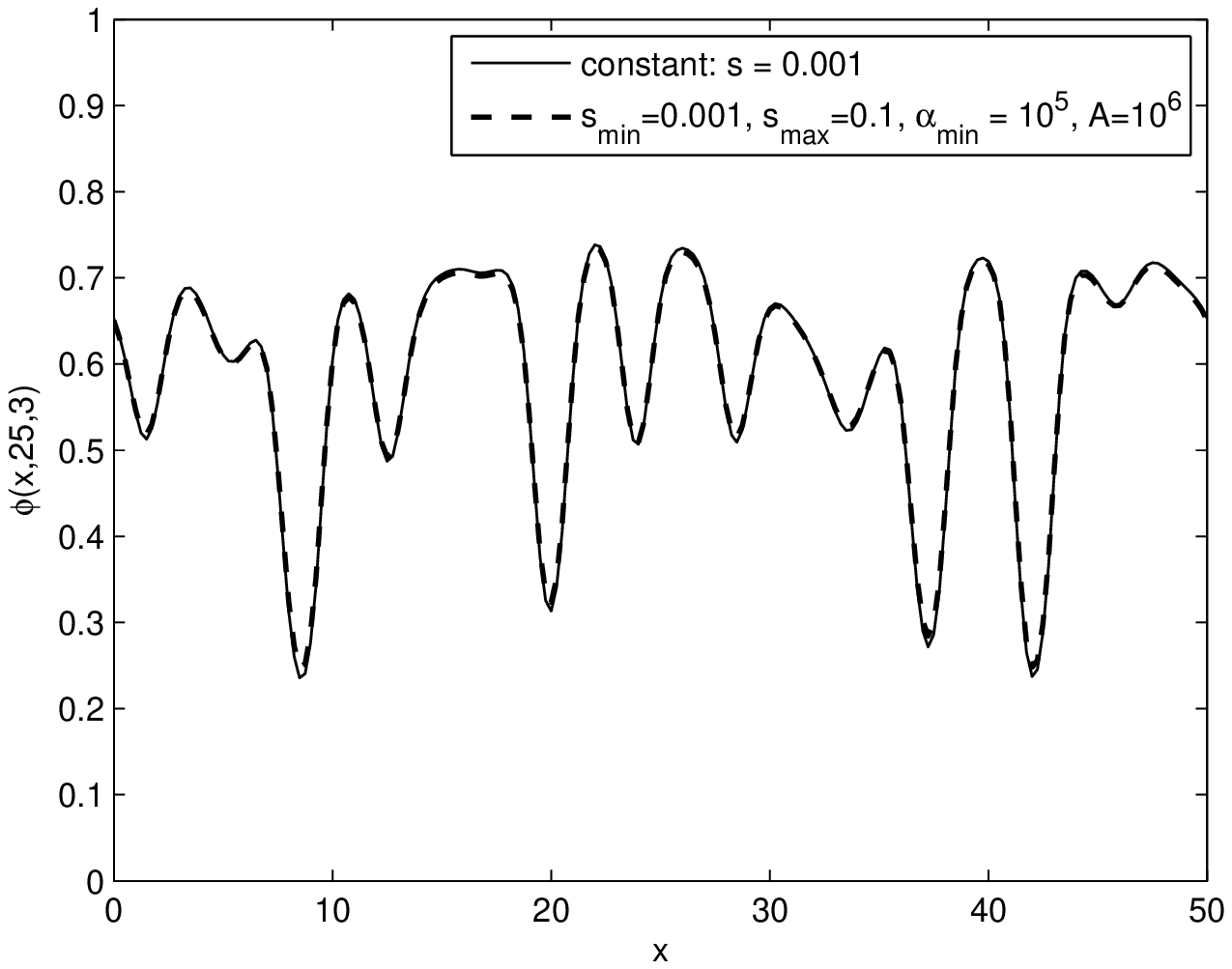}}
\subfigure[Energy evolution: $0\le t\le 20$.]{\includegraphics[scale=0.45]{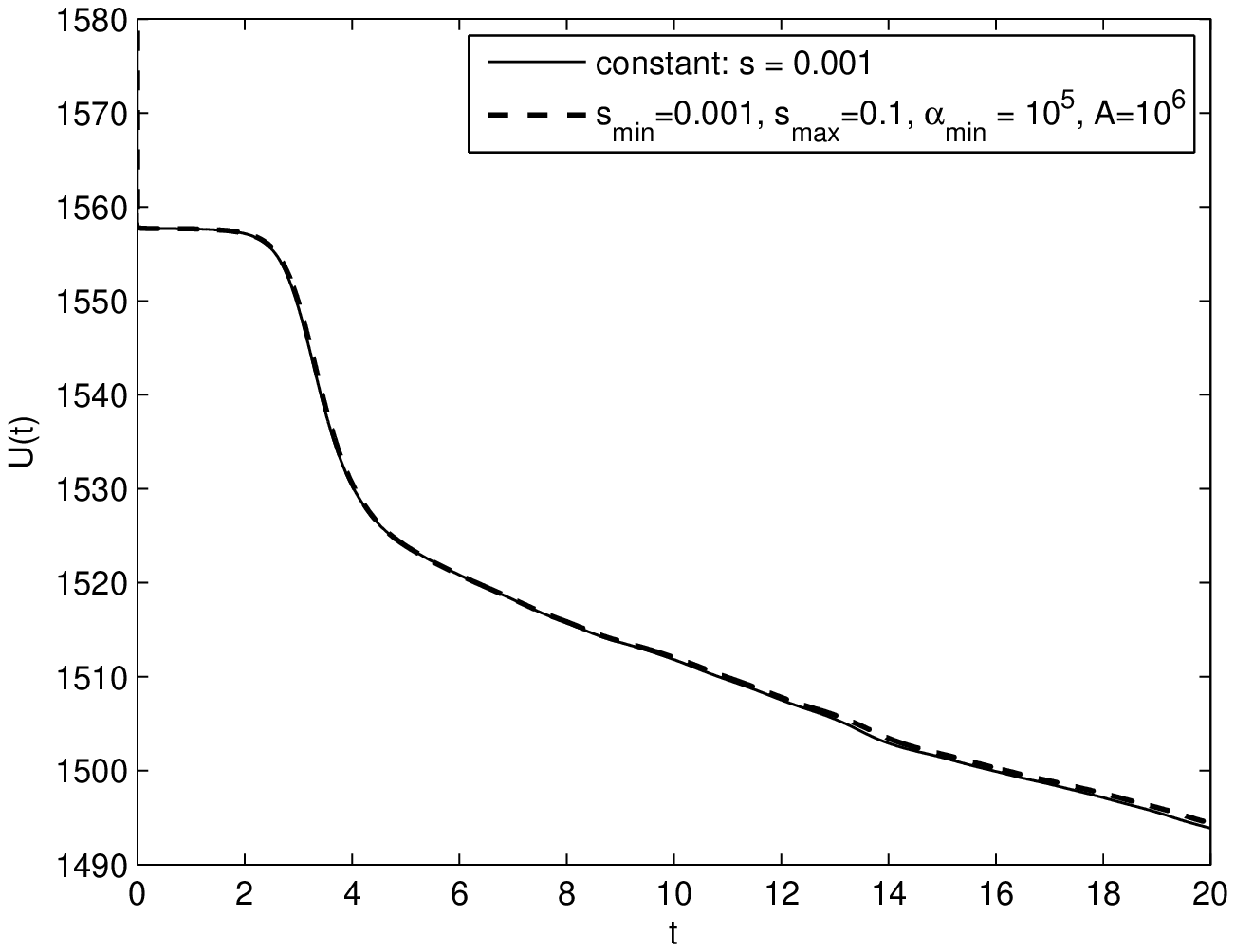}}
\subfigure[Energy evolution: $1.8\le t\le 5.5$.]{\includegraphics[scale=0.45]{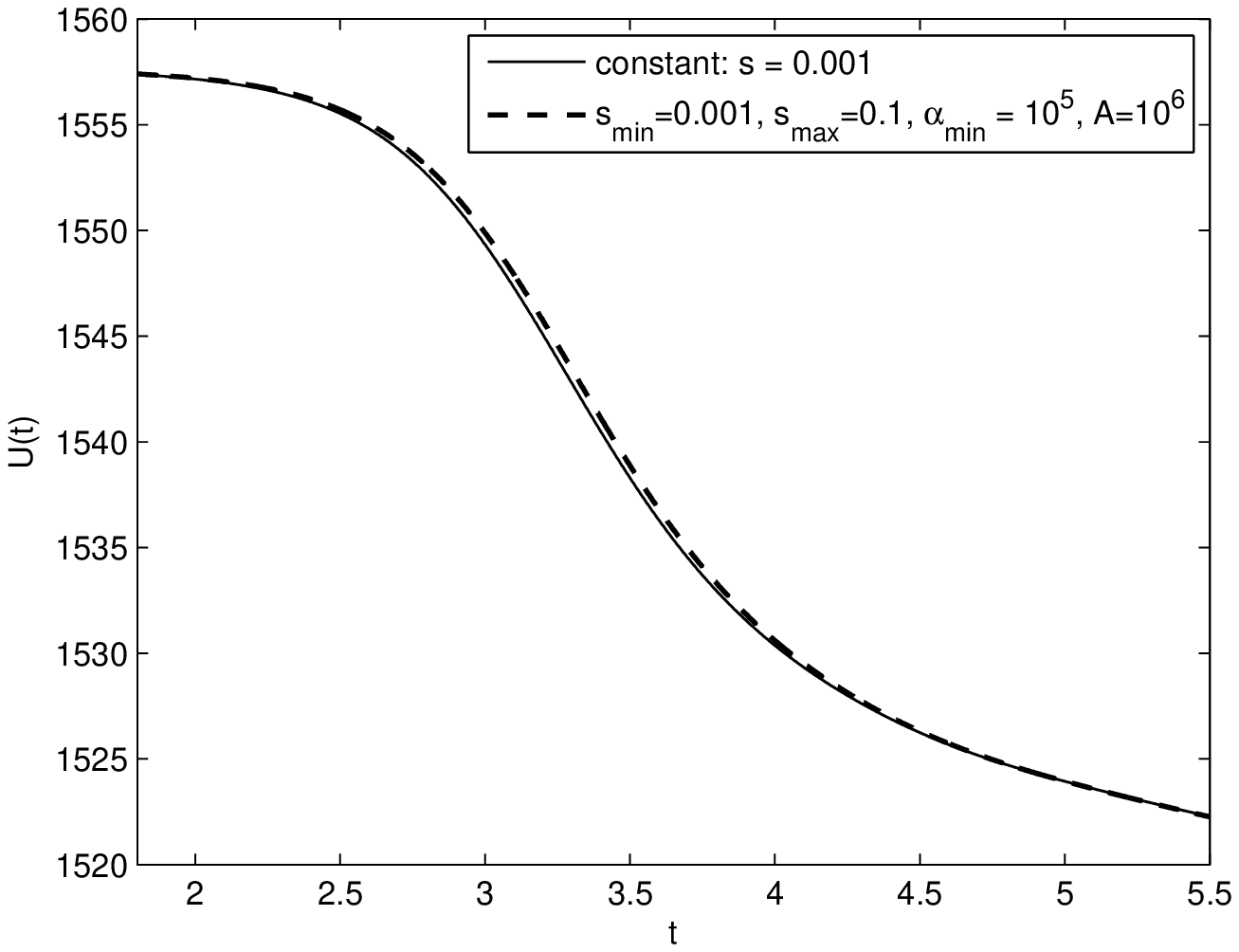}}
\caption{Numerical results obtained by the scheme
{\rm(\ref{sec3_full_discrete_scheme_1})-(\ref{sec3_full_discrete_scheme_2})} and adaptive time stepping method.}
\label{sec5_fig1_adaptive}
\end{figure}

Figure \ref{sec5_fig1_adaptive} shows the numerical results.
The numerical solutions at $t=20$ and $t=3$ are plotted in Figure \ref{sec5_fig1_adaptive}(a) and (b), respectively.
It is found that the adaptive time stepping technique preforms satisfactorily.
The energy evolution is shown in Figure \ref{sec5_fig1_adaptive}(c).
It is observed that the adaptive time stepping method captures the phase structure during the evolution,
especially for the phase transition stage where the energy decays sharply (see Figure \ref{sec5_fig1_adaptive}(b) and (d)).

\begin{figure}[!htp]
\centering
\includegraphics[scale=0.55]{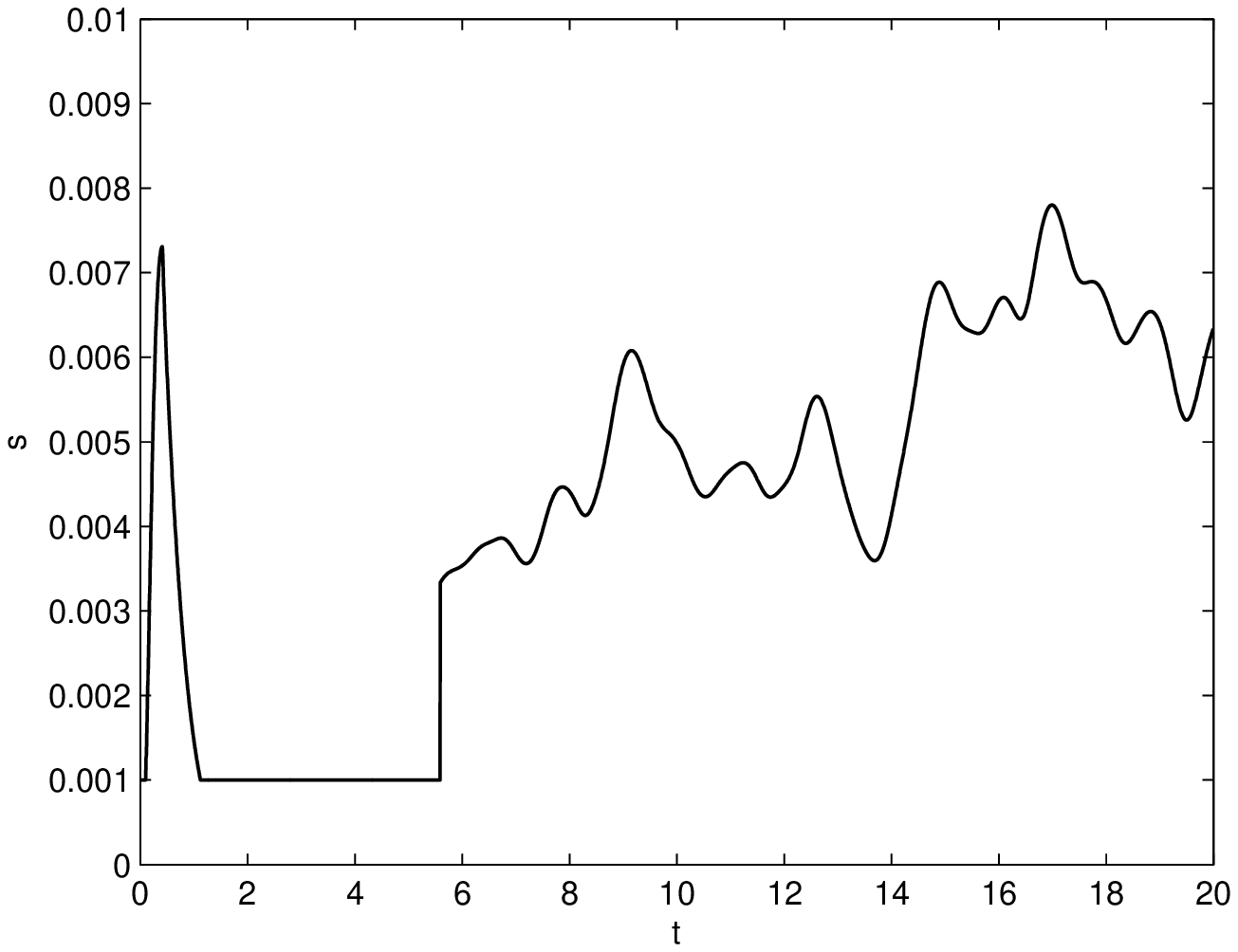}
\caption{Adaptive time step evolution.}
\label{sec5_fig2_adaptive}
\end{figure}

The corresponding time step evolution is presented in Figure \ref{sec5_fig2_adaptive}.
We see that the time step turns larger when $|U'|$ becomes smaller, while the time step turns smaller when the energy decreases quickly.
After the sharp energy decay, at about $t=6$, the parameter $\alpha$ is set to be $100$, so the time step enlarges rapidly.
After $t=6$, the time step increases overall except some fluctuations because of the alteration of the sign of $U''$.
The time step approaches the maximum $s_{\max}$ when the evolution goes on for sufficiently long time.

\begin{table}[!htp]
\centering
\caption{CPU time (seconds) comparison.}
\label{sec5_tab1}
\begin{tabular}{|c|c|c|c|c|c|c|}
\hline
$t$ & $1$ & $5$ & $10$ & $15$ & $20$ \\
\hline
constant & 1205 & 12334 & 36124 & 61799 & 86681 \\
\hline
adaptive & 1023 & 11473 & 23798 & 35501 & 45464 \\
\hline
\end{tabular}
\end{table}

Table \ref{sec5_tab1} lists the CPU time cost by using both constant and adaptive time stepping approaches.
At the early period of the computation, both approaches cost similar CPU time because the solution transits quickly and the small time steps are required.
Later, as the solution transits more and more slowly, the adaptive time stepping approach loosens the restriction for the time steps,
and as a result, the CPU time becomes significantly less in the comparison.
Though the last time step does not reach the maximum $s_{\max}$ in the considered interval $[0,20]$, as shown in Figure \ref{sec5_fig2_adaptive},
the CPU time of the adaptive time stepping approach is almost half of that of the constant case.

Here we see that the simulation with a larger time step performs well in the interval
where the energy decreases gently because of the unconditional energy stability of the difference scheme (\ref{sec3_full_discrete_scheme_1})-(\ref{sec3_full_discrete_scheme_2}),
while the linear scheme developed in \cite{LiXiao2014} will give wrong results with a step $s>0.003$
because the iterative method used to solve the scheme does not converge at some iterative step.
The adaptive time stepping approach is efficient for the simulation
because the moment at which the energy decays rapidly can be captured accurately and the large steps can be used without breaking down the iterations.

\subsection{The stochastic case}

The scheme (\ref{sec3_full_discrete_scheme_1_stoc}),(\ref{sec3_full_discrete_scheme_2}) with the uniform time step $s=0.001$ is used
to simulate the MMC-TDGL equation in the stochastic case.

The first experiment is to show the energy evolution though we do not obtain a theoretical result on it.
The noise strength is set to be $10^{-4}$ and the initial condition is $\phi(\br,0)=0.6+\zeta(\br)$,
where $\zeta(\br)$ is still a random disturbance as above, which implies that $\phi(\br,0)$ corresponds to an unstable state.
We will concentrate the energy evolution in the time interval $[0,10]$.
Because of the existence of the stochastic term,
we simulate $100$ independent samples to see the mean energy evolution, which is shown in Figure \ref{sec5_fig1_stoc}.
The bold black line represents the mean value of the $100$ samples and the thin blue line represents one of them.
It is found that the energy begins to decrease rapidly at about $t=2$,
which suggests the phase transition happens and the energy decays at each step after this moment.
As the noise is dominant before $t=2$ and the geometric evolution dominates after the occurrence of the phase transition,
we just need to pay more attention to some subintervals of $[0,2]$.
It is observed from Figure \ref{sec5_fig1_stoc}(b), (c) and (d) that
the mean energy does decrease throughout, although the energy of one of the samples may increase at some moments.

\begin{figure}[h]
\centering
\subfigure[$0\le t\le 10$]{\includegraphics[scale=0.45]{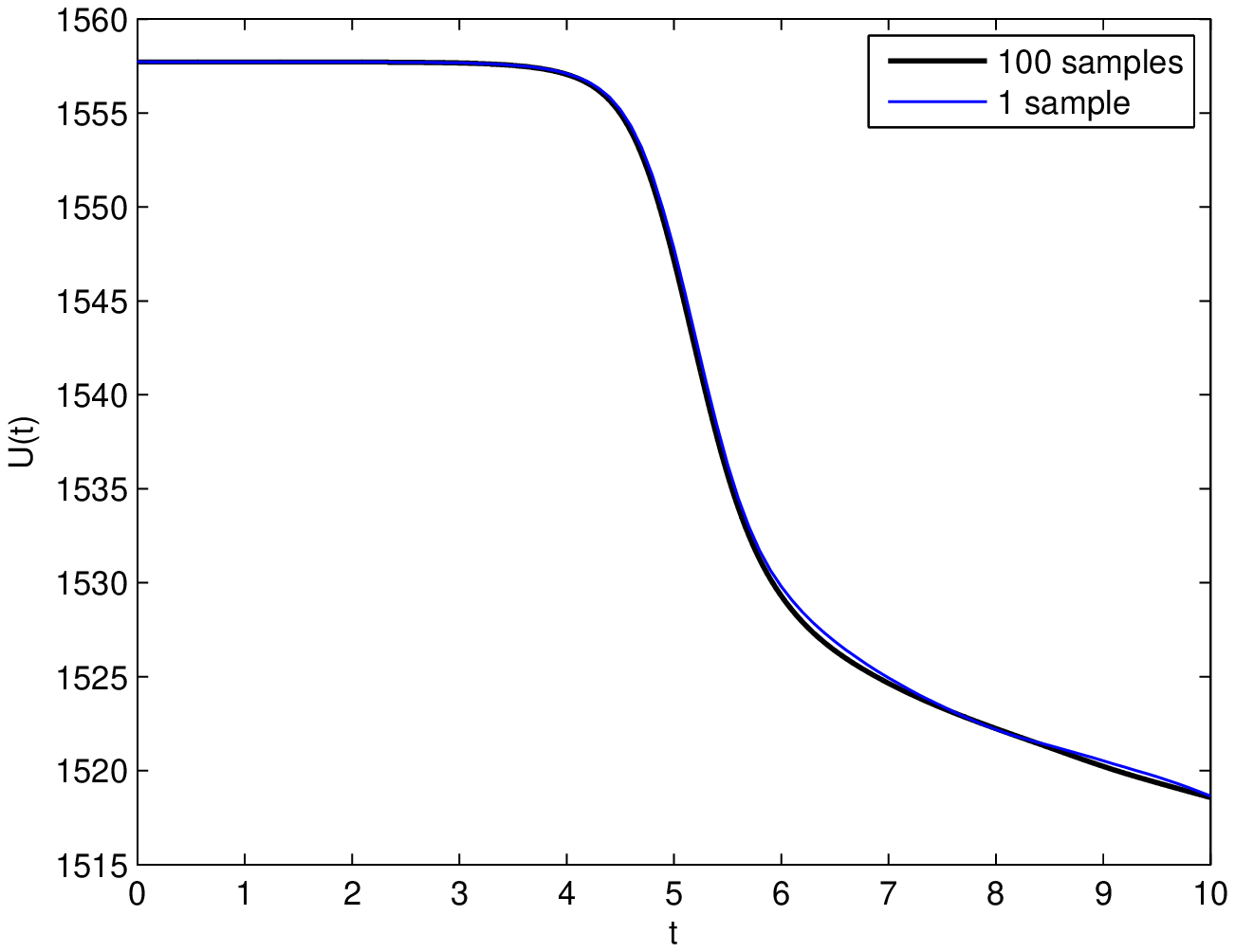}}
\subfigure[$0.19\le t\le 0.24$]{\includegraphics[scale=0.45]{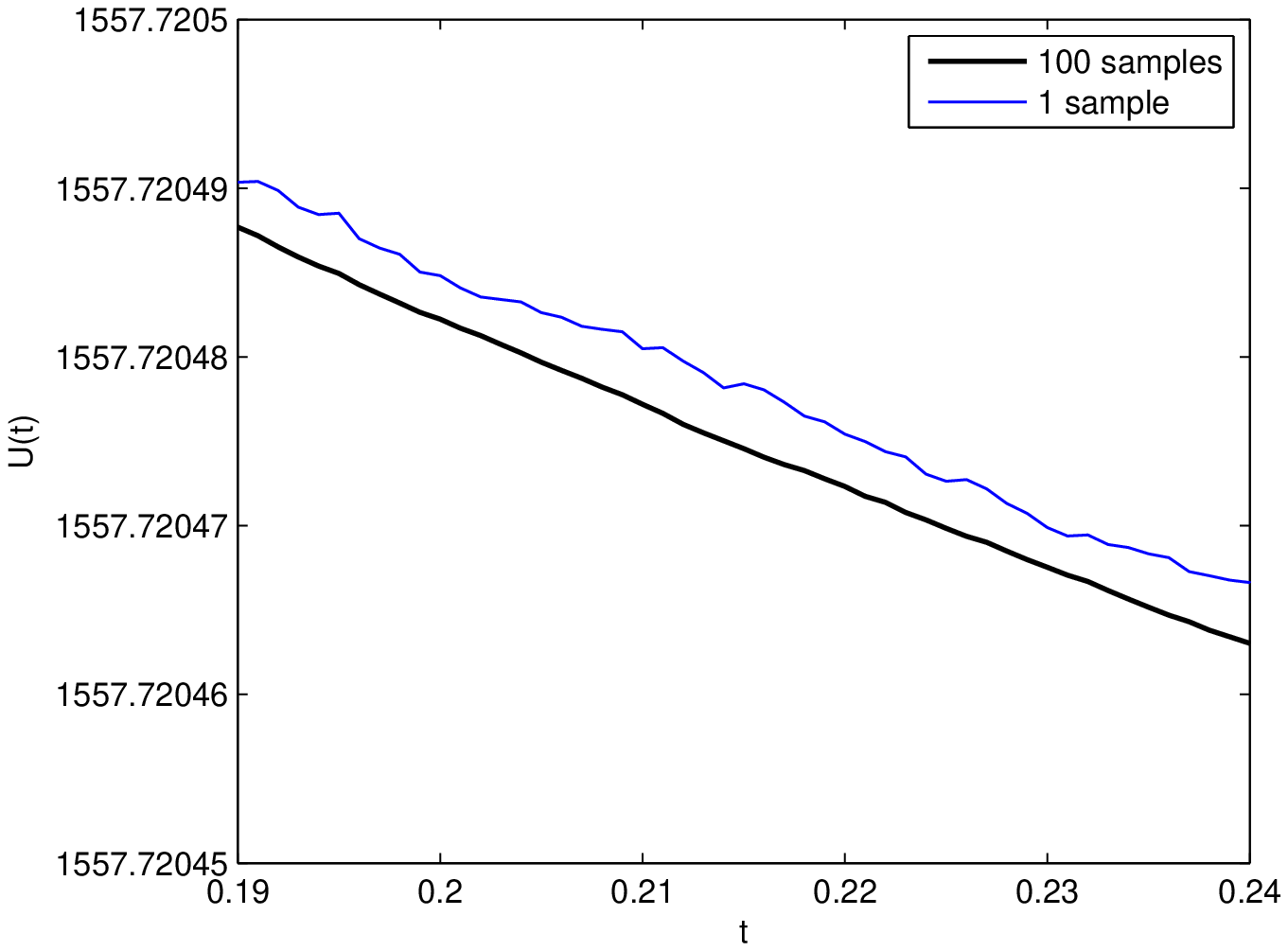}}
\subfigure[$0.32\le t\le 0.37$]{\includegraphics[scale=0.45]{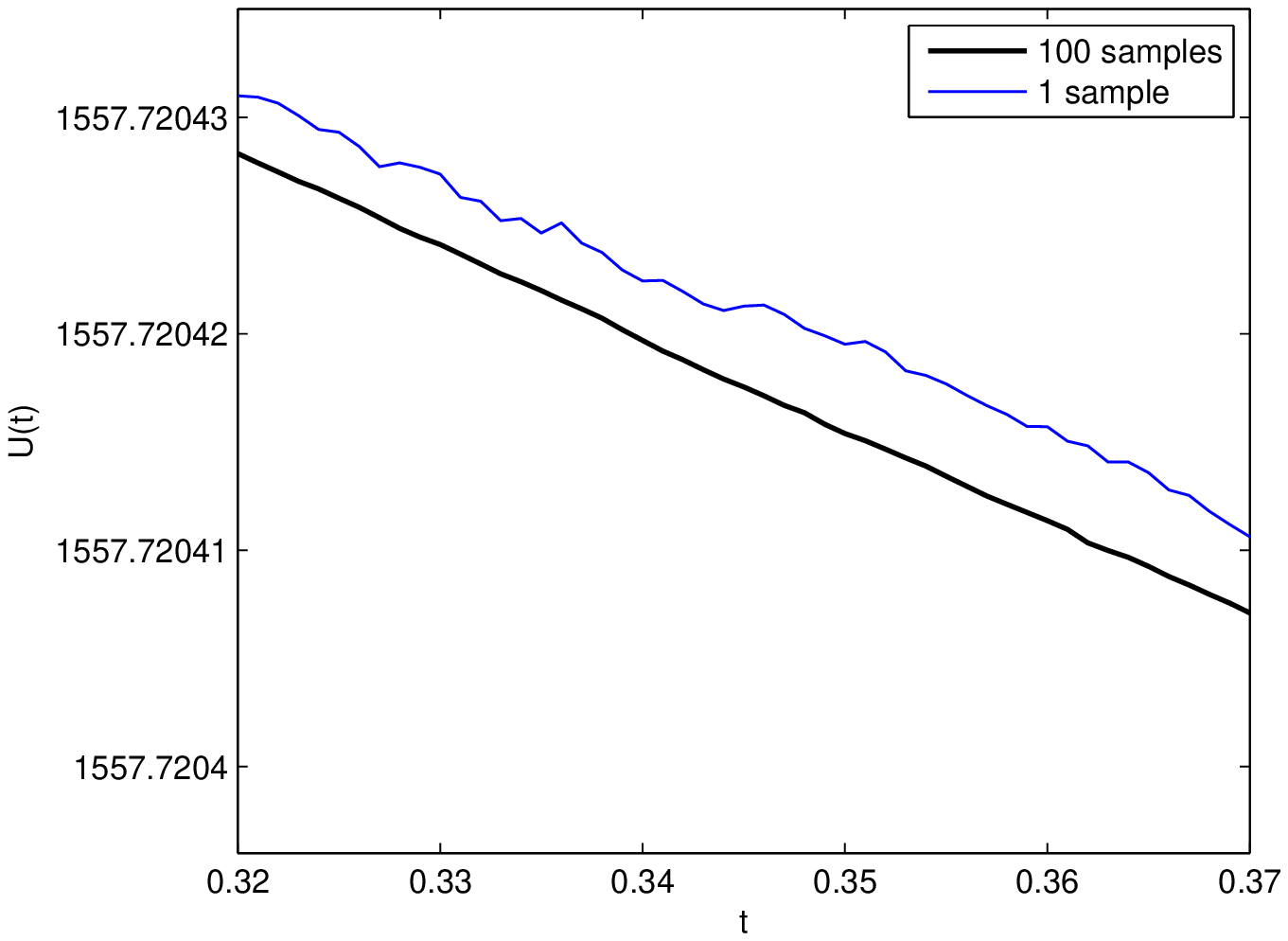}}
\subfigure[$0.5\le t\le 0.55$]{\includegraphics[scale=0.45]{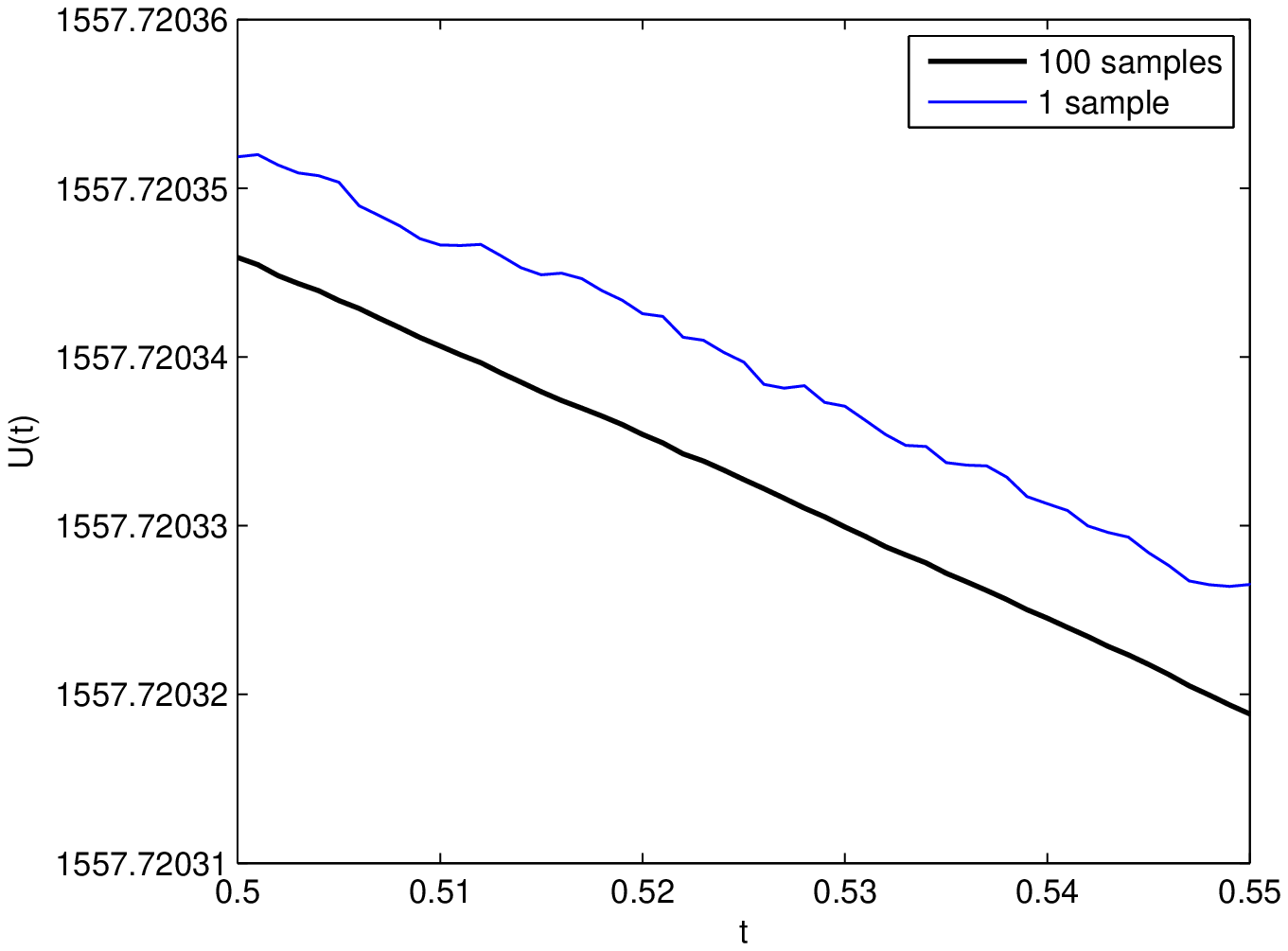}}
\caption{Energy evolution for the results obtained by the scheme
{\rm(\ref{sec3_full_discrete_scheme_1_stoc}),(\ref{sec3_full_discrete_scheme_2})}.}
\label{sec5_fig1_stoc}
\end{figure}

The second experiment is to compare the numerical solutions with those presented in the existing work \cite{LiXiao2014}.
The noise strength is set to be $10^{-4}$ and $10^{-3}$, respectively,
the Huggins parameter is $\chi=1.975$ and the initial state is uniform, namely, $\phi(\br,0)\equiv 0.3$.
We will concentrate the structures of the numerical solutions $\phi(\br,t)$ at $t=8$, $t=13$, $t=19$ and $t=25$.
Figure \ref{sec5_fig3_stoc} presents the results
corresponding to the cases $\varepsilon=10^{-4}$ and $\varepsilon=10^{-3}$.
Because of the existence of the stochastic term, these structures cannot be identical completely to the figures presented in \cite{LiXiao2014}.
Nevertheless, we still observe the same transition processes of the phase structures by the comparison.

\begin{figure}[!htp]
\centering
\subfigure[$t=8$]{\includegraphics[scale=0.35]{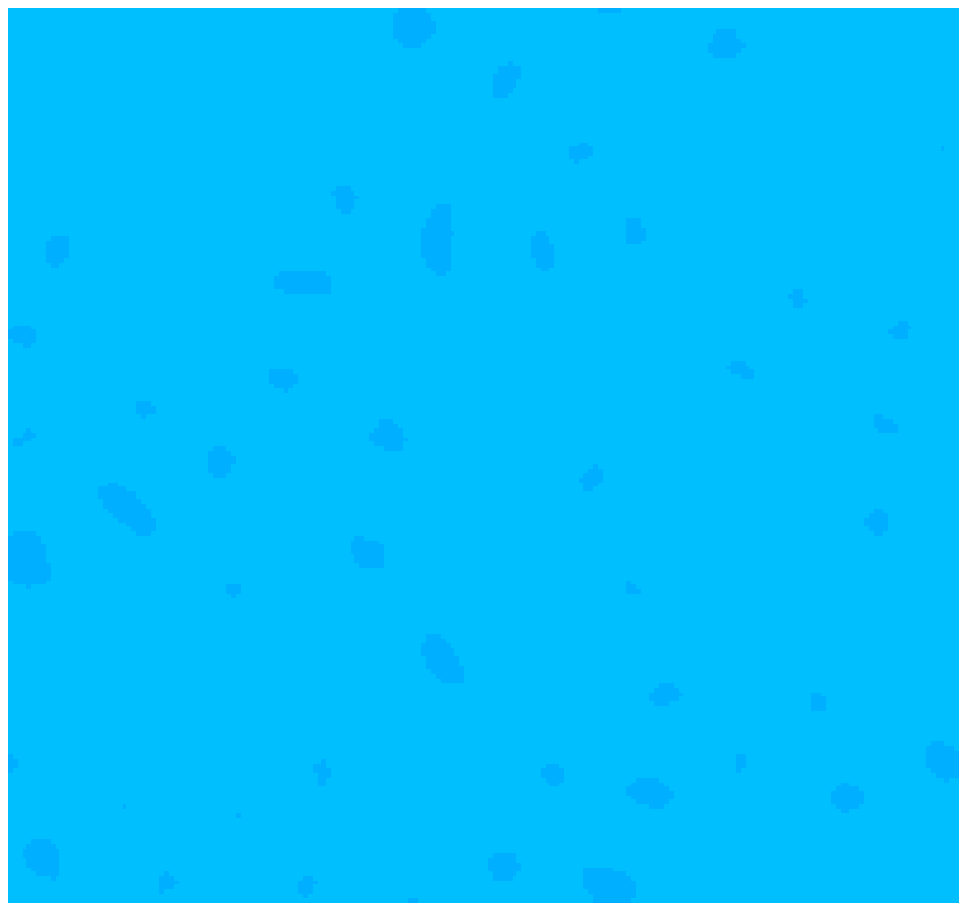}}
\subfigure[$t=13$]{\includegraphics[scale=0.35]{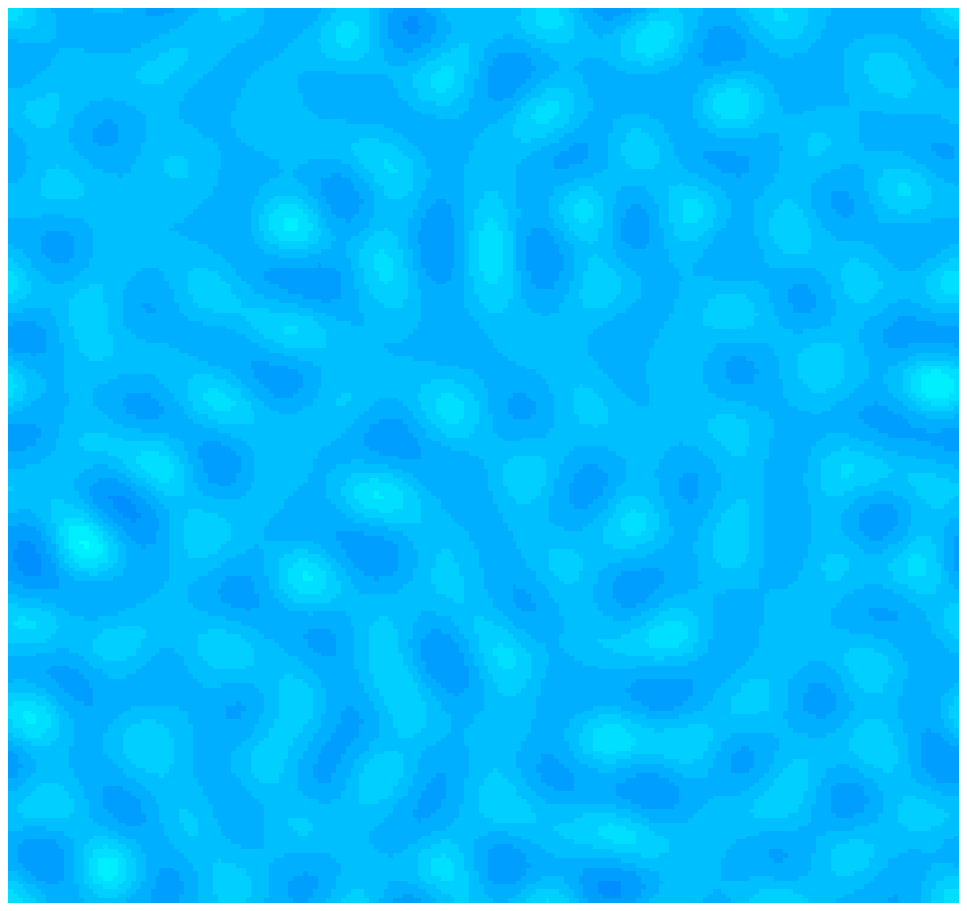}}
\subfigure[$t=19$]{\includegraphics[scale=0.35]{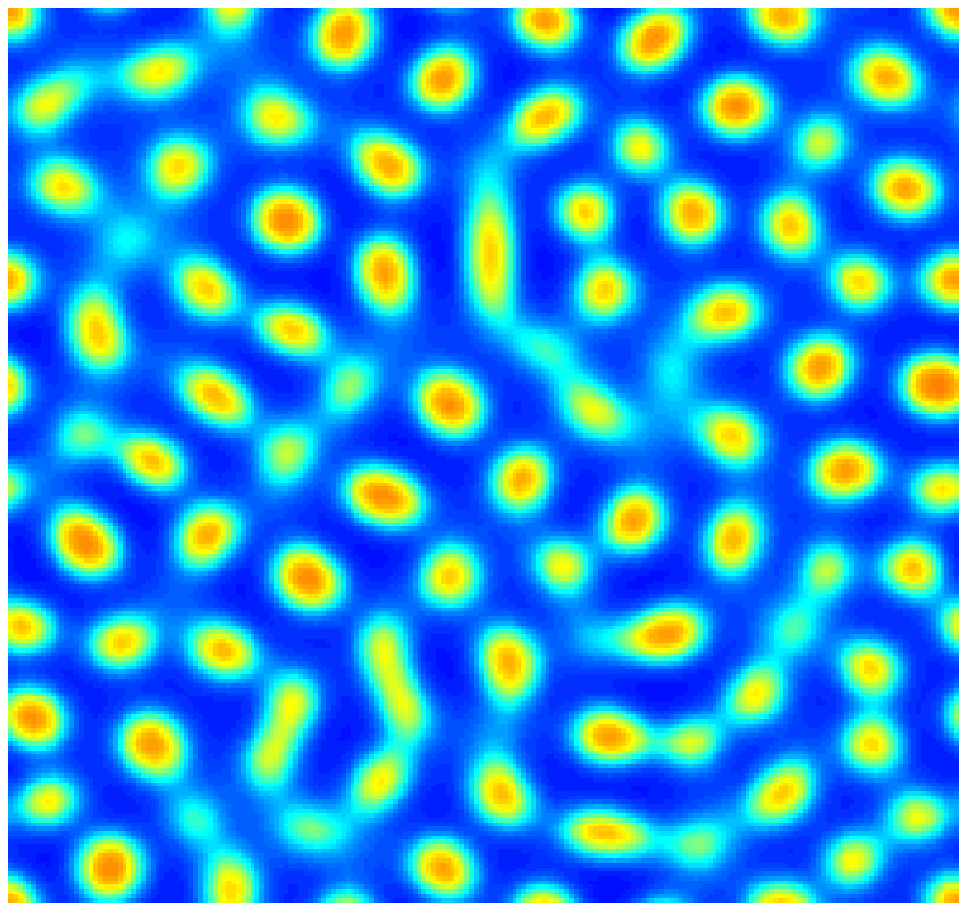}}
\subfigure[$t=25$]{\includegraphics[scale=0.35]{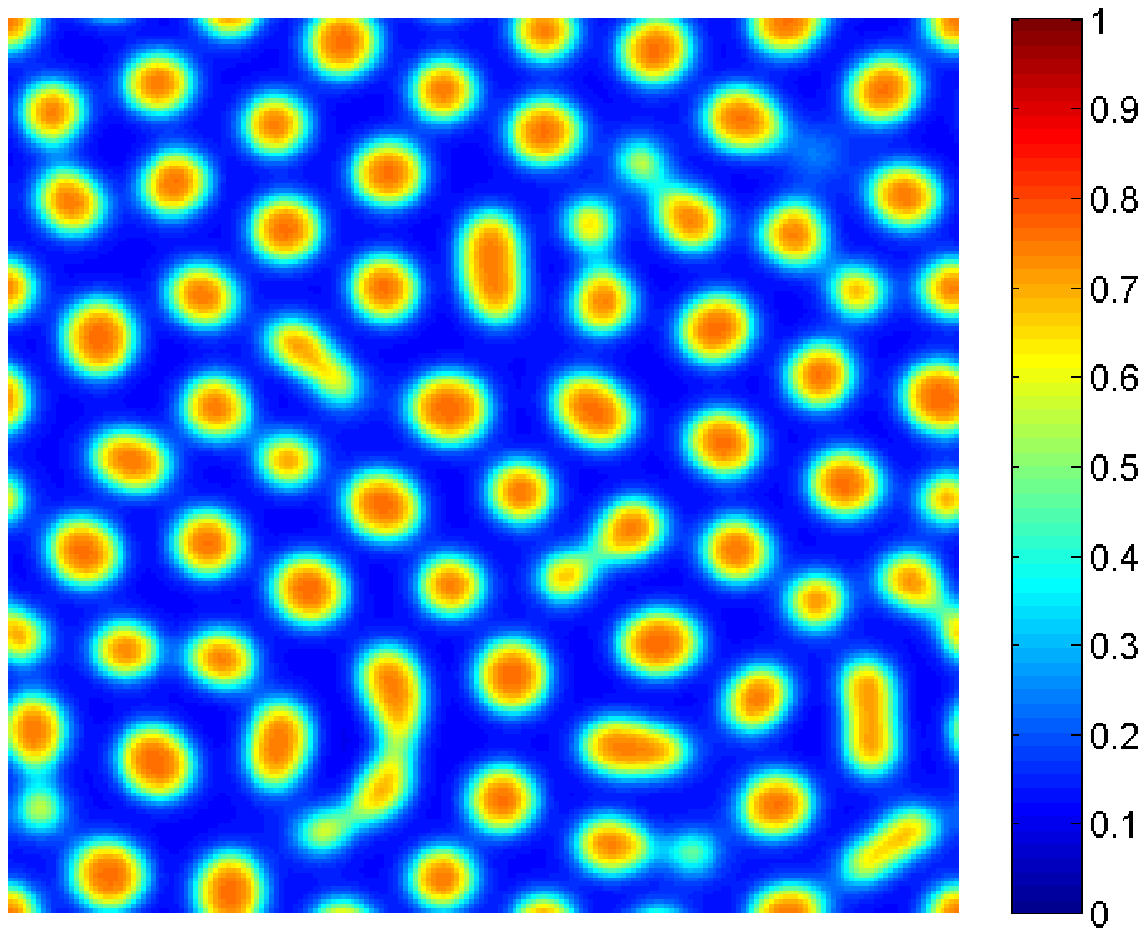}}\\
\subfigure[$t=8$]{\includegraphics[scale=0.35]{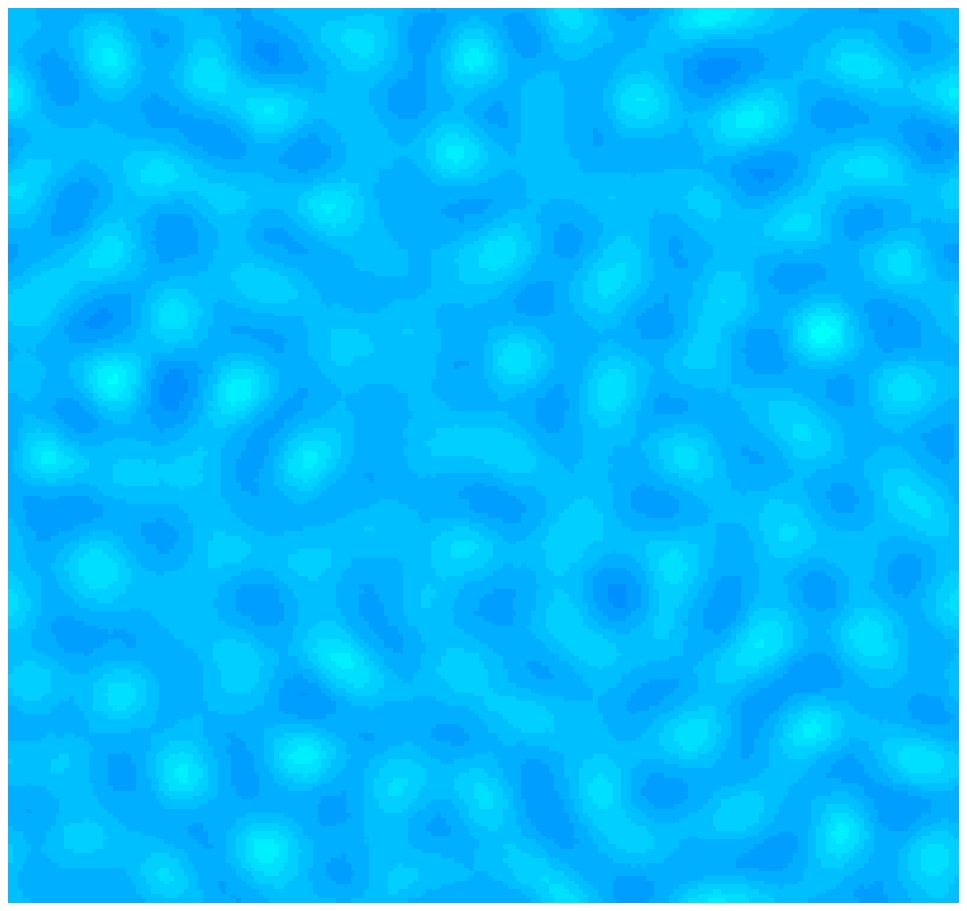}}
\subfigure[$t=13$]{\includegraphics[scale=0.35]{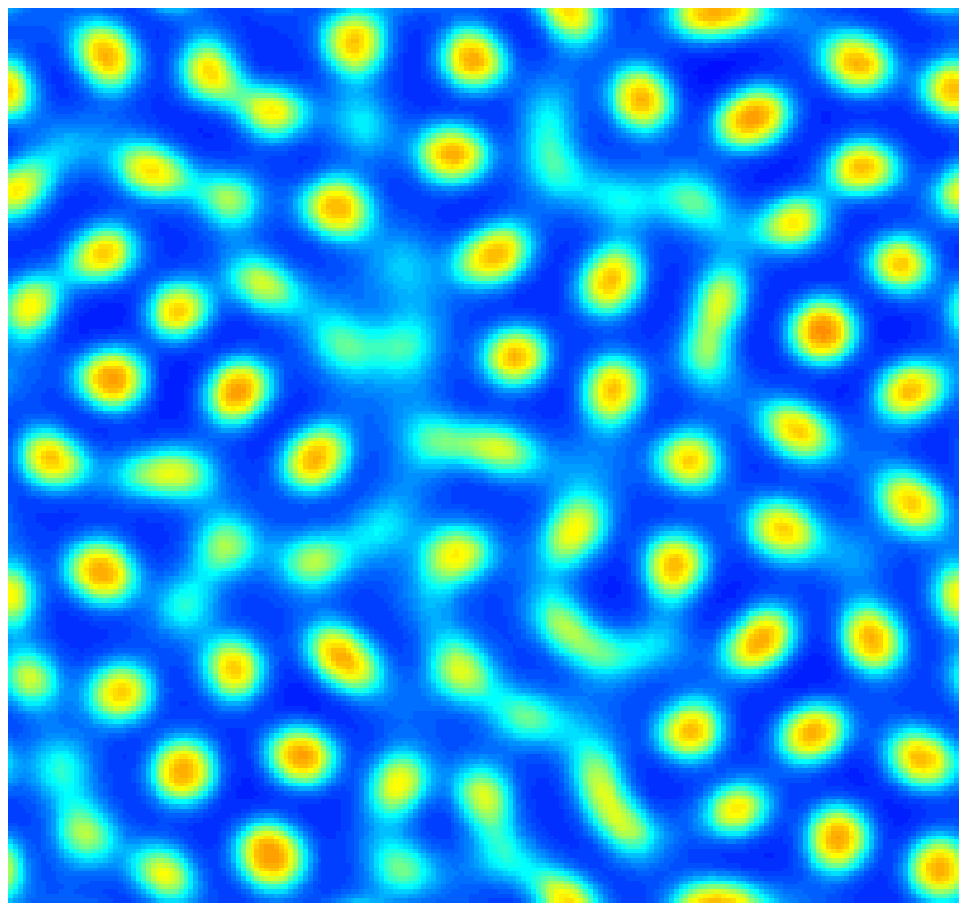}}
\subfigure[$t=19$]{\includegraphics[scale=0.35]{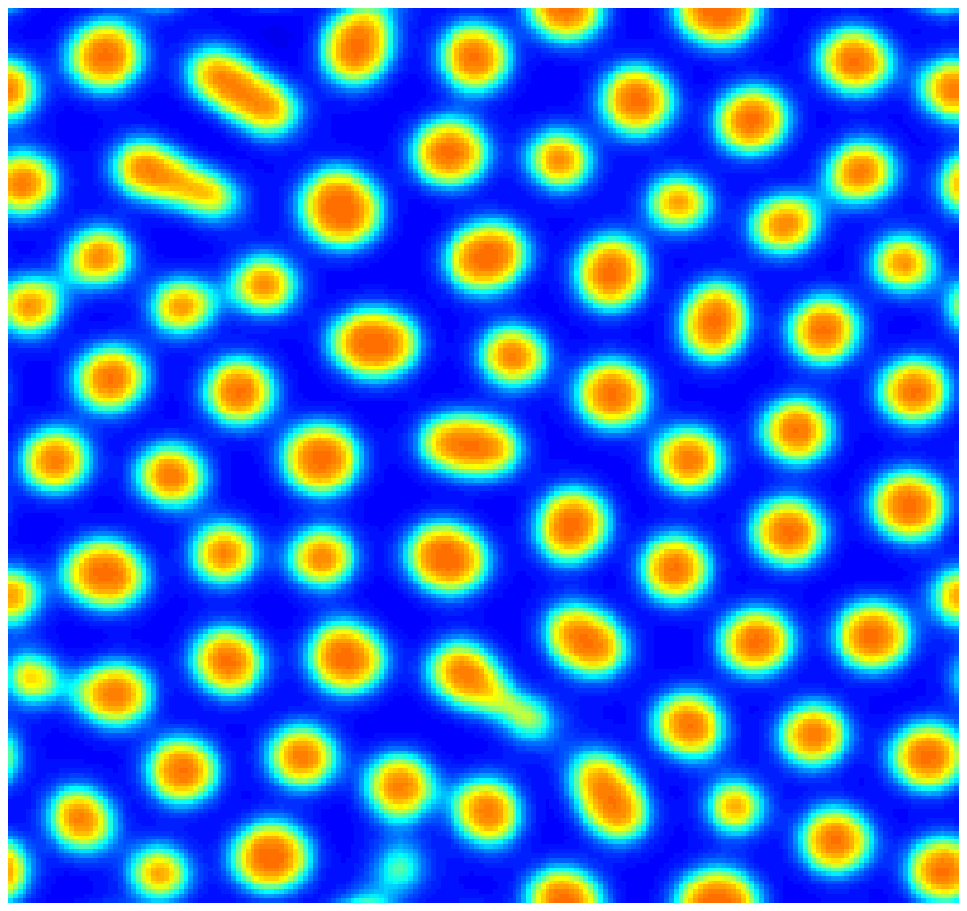}}
\subfigure[$t=25$]{\includegraphics[scale=0.35]{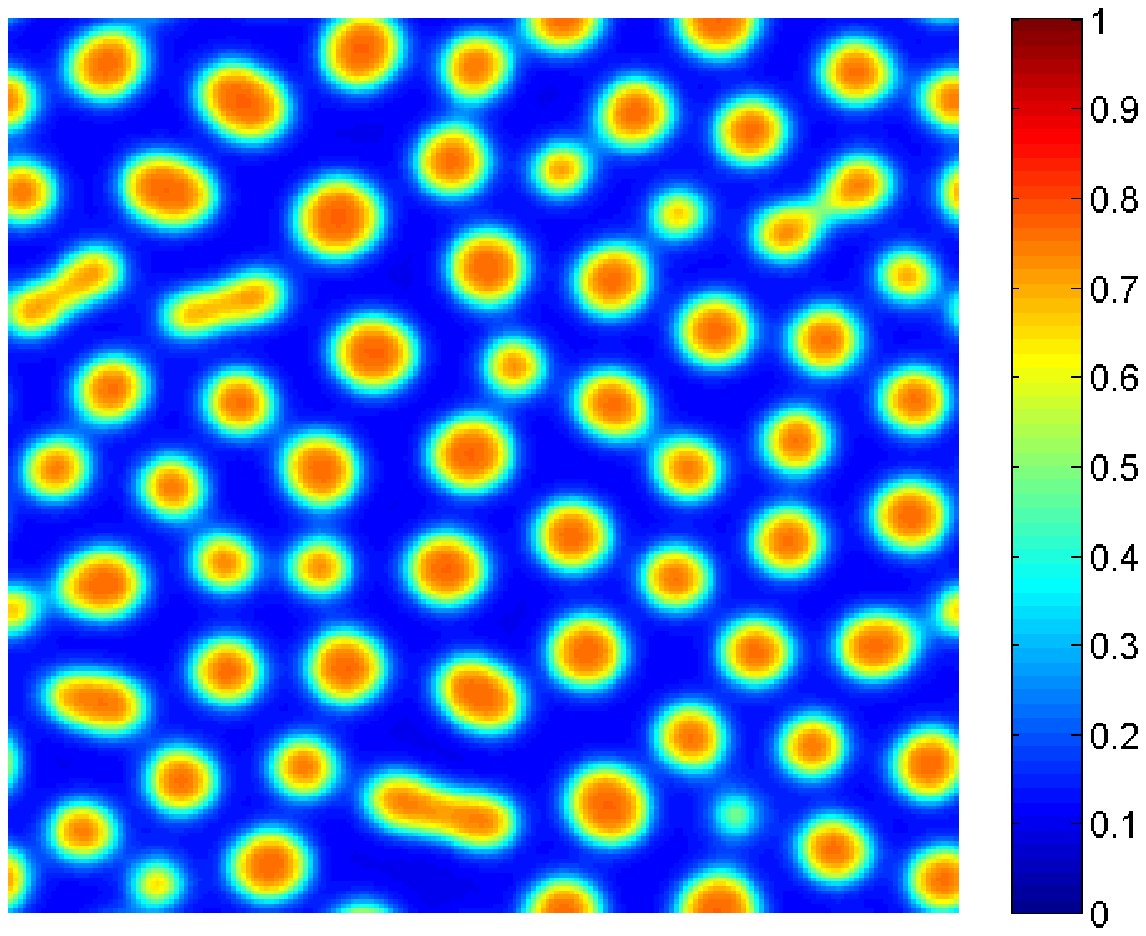}}
\caption{Evolution for $\varepsilon=10^{-4}$ (up) and $\varepsilon=10^{-3}$ (below).}\label{sec5_fig3_stoc}
\end{figure}

Since the initial uniform state is metastable, the system must climb over an energy barrier so that the phase transition occurs,
which is possible as long as the noise is sufficiently strong.
Obviously, the noise with strength either $10^{-3}$ or $10^{-4}$ is able to lead to the phase transition.
It is observed from Figure \ref{sec5_fig4_stoc} that the phase transition process relates to the noise strength $\varepsilon$.
We find from Figure \ref{sec5_fig4_stoc}(a) and (b) that the larger $\varepsilon$ is, the faster the phase transition occurs,
which is the same phenomenon as we have seen in our previous work \cite{LiXiao2014}.
Furthermore, Figure \ref{sec5_fig4_stoc}(c) and (d) show the energy curves in a short interval from the beginning
for the cases $\varepsilon=10^{-3}$ and $\varepsilon=10^{-4}$, respectively,
and the energy barrier on the transition path is observed.
More precisely, we find that both the energy curves increase in a very short time interval,
and then keep the decay trend with tiny oscillations till the end,
which is consistent with the phenomena illustrated in \cite{ZhangWei2012}.
In addition,
if we denote by $\Delta U(\varepsilon)$ the difference between the initial and maximum energy under the noise with strength $\varepsilon$,
we find from the labels of $y$-axis in (c) and (d) that $\Delta U(10^{-3})\approx100\Delta U(10^{-4})$.
It is suggested that the state that the system reaches with $\varepsilon=10^{-3}$ is more ``unstable'' than that with $\varepsilon=10^{-4}$,
which is why the phase transition is faster for the case $\varepsilon=10^{-3}$ and slower for the case $\varepsilon=10^{-4}$.

\begin{figure}[h]
\centering
\subfigure[$0\le t\le 100$]{\includegraphics[scale=0.45]{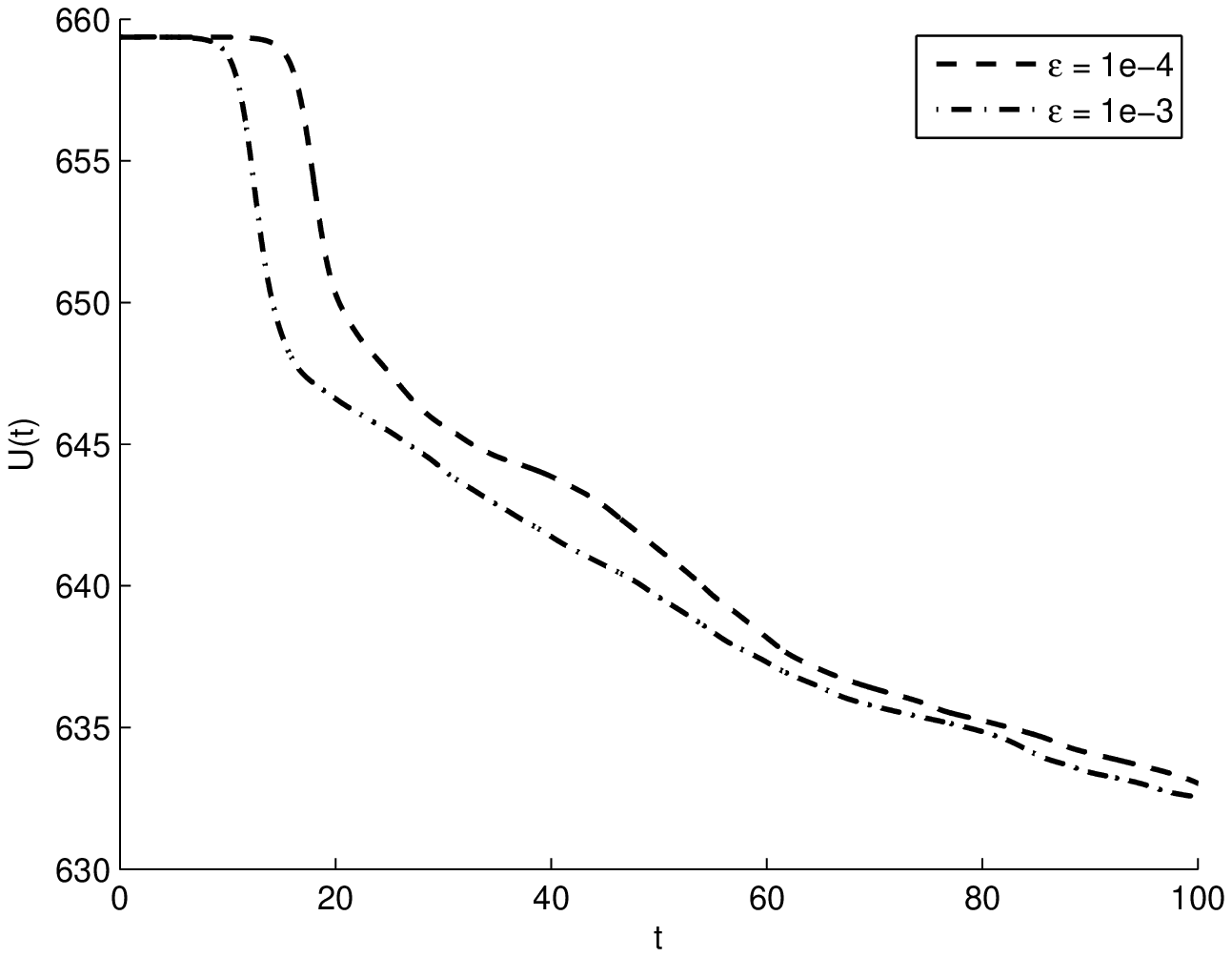}}
\subfigure[$0\le t\le 30$]{\includegraphics[scale=0.45]{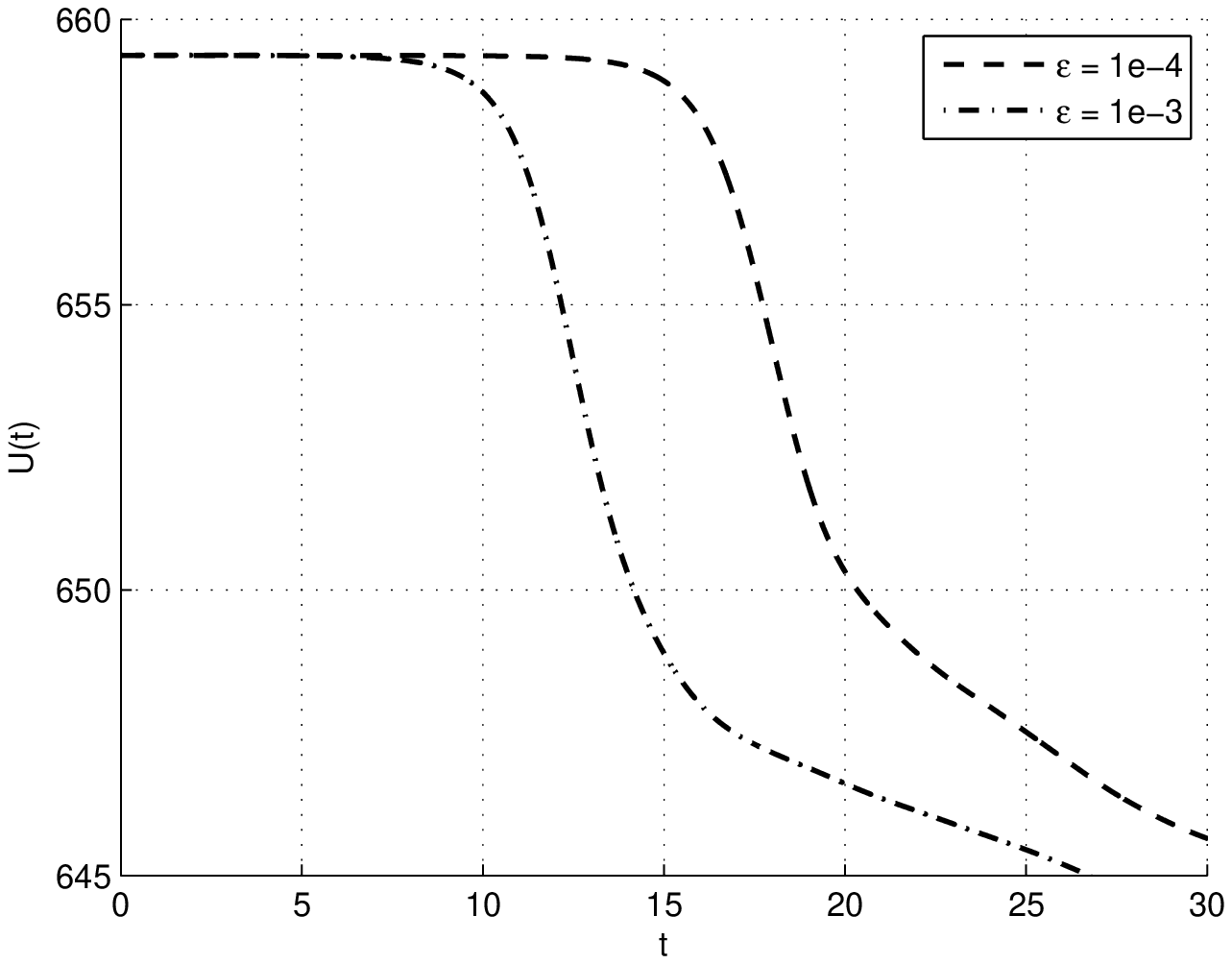}}
\subfigure[$0\le t\le 5$, $\varepsilon=10^{-3}$]{\includegraphics[scale=0.45]{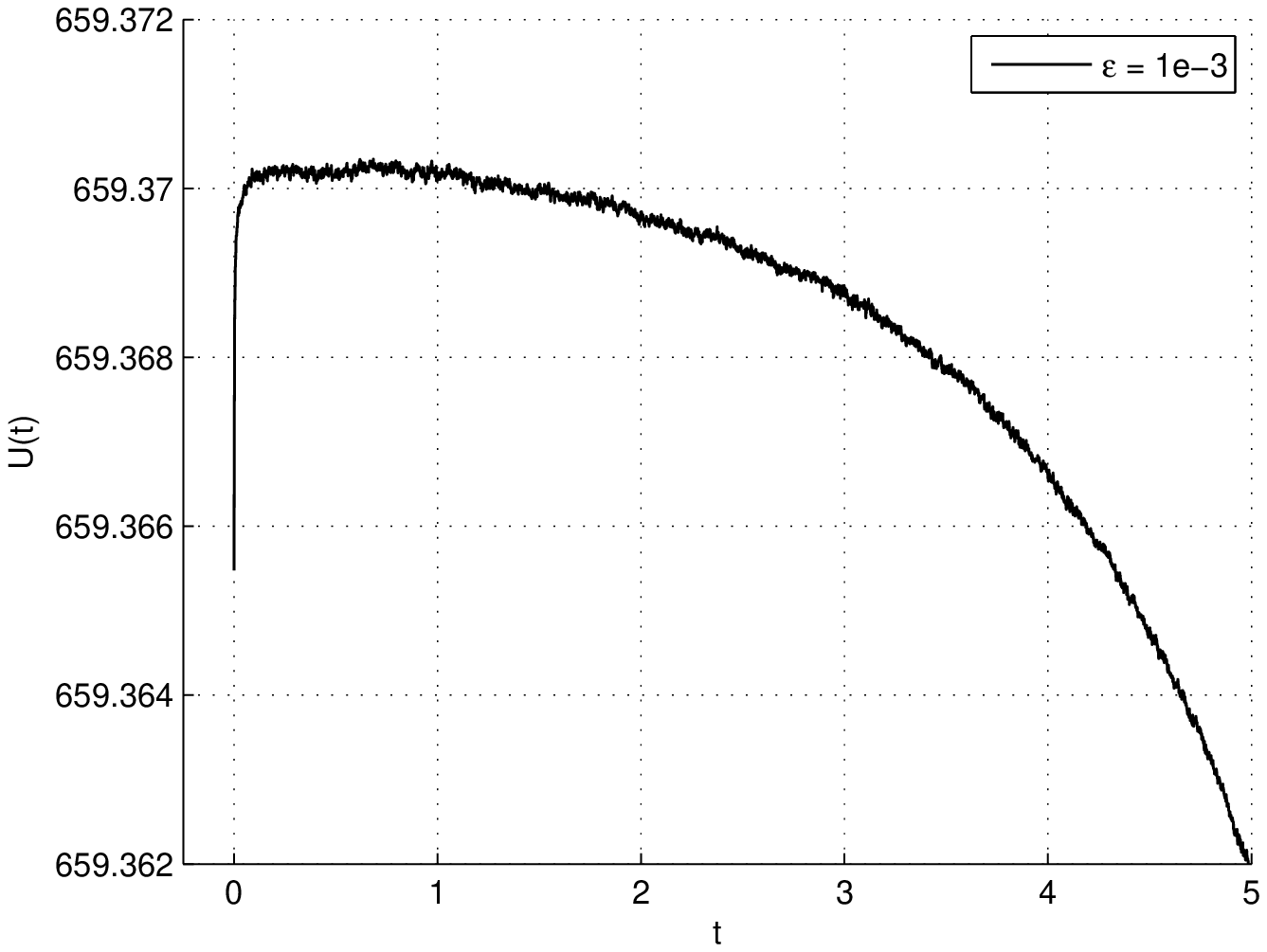}}
\subfigure[$0\le t\le 5$, $\varepsilon=10^{-4}$]{\includegraphics[scale=0.45]{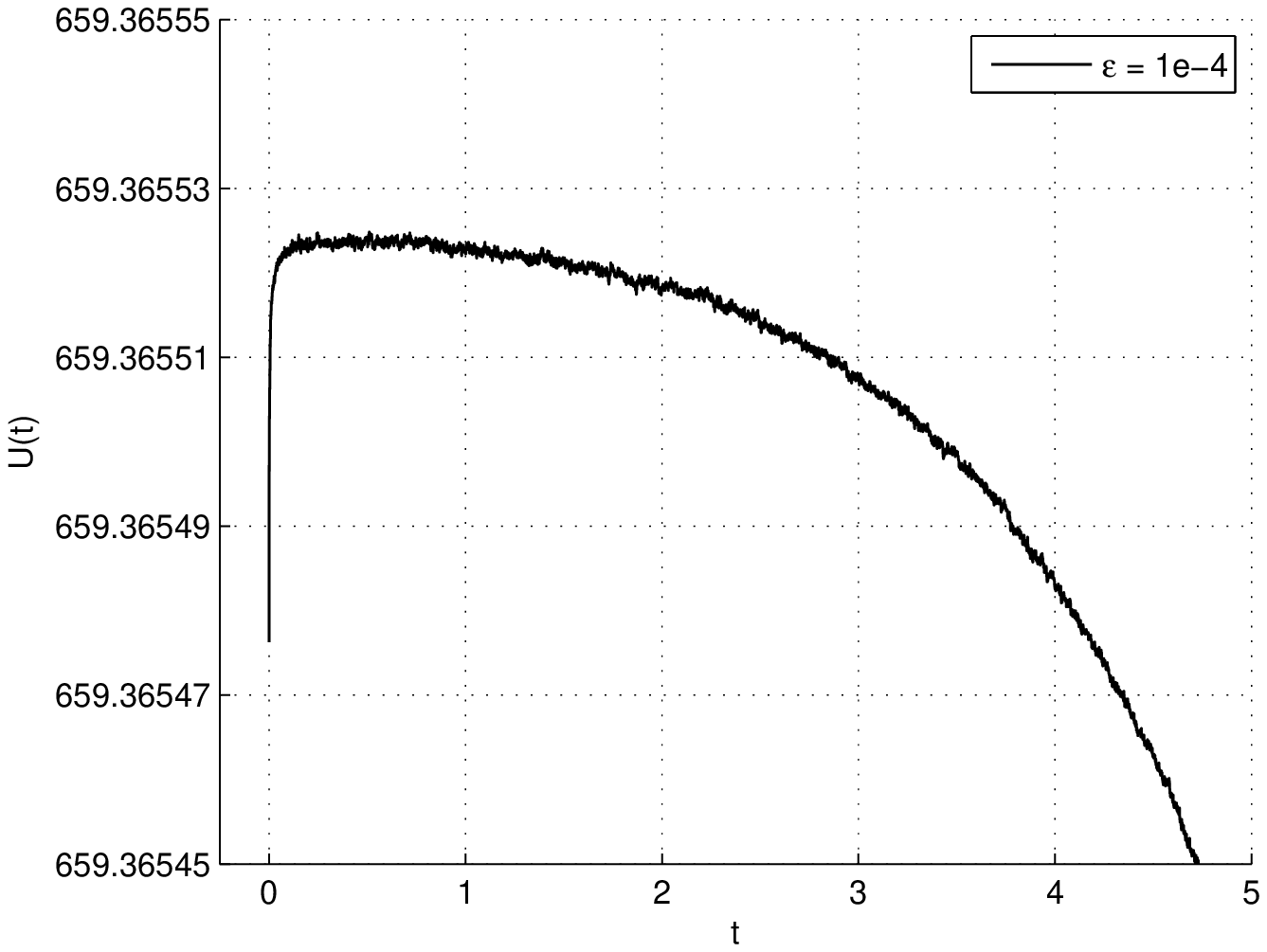}}
\caption{Energy evolution for $\varepsilon=10^{-4}$ and $10^{-3}$.}
\label{sec5_fig4_stoc}
\end{figure}

\section{Conclusions}

In this paper, we develop an unconditionally energy stable difference scheme for
a Cahn-Hilliard equation, i.e., the MMC-TDGL equation in the non-stochastic case.
The method is based on a convex splitting of the energy functional, namely, a sum of convex and concave parts.
The unconditional energy stability of the difference scheme for the non-stochastic case is obtained via a fundamental estimate of the convex splitting.
The unconditional unique solvability is proved by constructing a convex functional whose unique minimum is rightly the solution of the scheme.
Based on such fact, an optimization algorithm, the Newton method, is used to solve the difference scheme.
For the MMC-TDGL equation in the stochastic case,
we develop the difference scheme by using the same technique as the non-stochastic case without proving the energy law.
We sample one hundred trajectories with the same initial unstable state and observe that the mean of their energies decreases strictly.
The results of the stochastic simulations are consistent with the existing work \cite{LiXiao2014}
and the energy barrier on the transition path is observed.

Since the difference scheme for the non-stochastic is unconditionally stable,
we consider the adaptive time stepping techniques to accelerate the long time simulation.
Based on the energy variation, we use an adaptive time stepping formula (\ref{sec4_adapted_time}),
where we consider $\alpha$ as a function with respect to the second-order derivative $U''$ instead of the constant in \cite{QiaoZhonghua2011}.
The basic idea is that we desire a large $\alpha$ when $U''$ changes from positive to negative
in order to capture the moment when the energy begins to decay rapidly.
Numerical experiments show the efficiency of the adaptive time stepping method,
i.e., using the large time steps for computation without losing accuracy.
The adaptive time stepping strategy developed in this paper is appropriate to other problems with both flat and sharp decay of the energy.

According to the framework of the convex splitting method in \cite{Wise2009},
the difference scheme (\ref{sec3_full_discrete_scheme_1})-(\ref{sec3_full_discrete_scheme_2}) should be first-order convergent in time.
One of the future works in this direction is to carry out more rigorous analysis
for some error estimates for the scheme (\ref{sec3_full_discrete_scheme_1})-(\ref{sec3_full_discrete_scheme_2}).
Obtaining a satisfactory error estimate for a numerical scheme for the MMC-TDGL equation seems difficult,
since the free energy functional presents high nonlinearity and singularity in both bulk and interface parts
so that the common strategies, such as the technique used in \cite{Wise2009}, will not work.
We need to find some proper functions or appropriate upper bounds to control the singular parts.
Under the framework given in \cite{Wise2012}, second-order (in time) convex splitting schemes for the MMC-TDGL equation may be constructed.
Still, it is difficult to obtain the rigorous error estimates.
We will put efforts on error estimates on energy stable schemes for the MMC-TDGL equation in the future work.
Other future works in this direction include the linear difference schemes with the unconditional energy stability,
which is important in improving the efficiency of the simulations for the complicated nonlinear equations.

\Acknowledgements{
We would like to express our thanks to the anonymous reviewers
whose valuable comments and suggestions help us improve this article greatly.
We are grateful to Prof.~Du Qiang of Beijing Computational Science Research Center
and Prof. Ji Guanghua of Beijing Normal University for many valuable comments.
Li X thanks the Hong Kong Polytechnic University for support during his visit.
The research of Qiao Z H is partially supported by the Hong Kong Research Council GRF
grants 202112, 15302214, 509213 and NSFC/RGC Joint Research Scheme N\_HKBU204/12.
Zhang H is partially supported by NSFC/RGC Joint Research Scheme No.~11261160486,
NSFC grant No.~11471046 and the Ministry of Education Program for New Century Excellent Talents Project NCET-12-0053.}


\end{document}